\definecolor{labelkey}{rgb}{0.0, 0.8, 0.3}
\numberwithin{equation}{section}
\declaretheorem[name=Definition, style=definition]{definition}
\declaretheorem[name=Lemma]{lemma}
\declaretheorem[name=Proposition]{proposition}
\declaretheorem[name=Remark, style=remark]{remark}
\declaretheorem[name=Theorem]{theorem}
\def\TV{\sf{TV}}
\def\one{\mathbbm{1}}
\def\E{\mathbb{E}}
\def\eqdef{\triangleq}
\newcommand{\poi}{\operatorname{Poi}}
\renewcommand{\sf}{\mathsf}
\renewcommand{\cal}{\mathcal}
\newcommand{\bb}{\mathbb}
\newcommand{\ber}{\operatorname{Ber}}
\renewcommand{\it}[1]{\textit{#1}}
\renewcommand{\KL}{\sf{KL}}
\newcommand\subsetsim{\mathrel{%
  \ooalign{\raise0.2ex\hbox{$\subset$}\cr\hidewidth\raise-0.8ex\hbox{\scalebox{0.9}{$\sim$}}\hidewidth\cr}}}
\newcommand{\LF}{\mathsf{LFHT}}
\newcommand{\GoF}{\mathsf{GoF}}
\newcommand{\TS}{\mathsf{TS}}
\renewcommand{\P}{\mathbb{P}}
\renewcommand{\d}{\mathrm{d}}
\renewcommand{\D}{\d}
\renewcommand{\sep}{\operatorname{sep}}
\newtheorem{corollary}[theorem]{Corollary}
\newcommand{\cat}{\sf{CAT}}
\begin{document}

\begin{frontmatter}

	\title{Minimax optimal testing \\by classification}
	\runtitle{Minimax optimal testing by classification}
	\author{Patrik Róbert Gerber \hfill prgerber@mit.edu \\ 
	Yanjun Han \hfill yanjunhan@nyu.edu \\ 
        Yury Polyanskiy \hfill yp@mit.edu \\}

	\address{{Department of Mathematics} \\
		{Massachusetts Institute of Technology}\\
		{77 Massachusetts Avenue,}\\
		{Cambridge, MA 02139, USA}}
	\address{
		 {Institute for Data, Systems, and Society} \\
		{Massachusetts Institute of Technology}\\
		{50 Ames St,}\\
		{Cambridge, MA 02142, USA}
	}
        \address{
		 {Department of Electrical Engineering and Computer Science} \\
		{Massachusetts Institute of Technology}\\
		{32 Vassar St,}\\
		{Cambridge, MA 02142, USA}
	}
	
	\runauthor{Gerber, Han and Polyanskiy}
	\begin{abstract}  
	\small{This paper considers an ML inspired approach to hypothesis testing known as classifier/classification-accuracy testing ($\cat$). In $\cat$, one first trains a classifier by feeding it labeled synthetic samples generated by the null and alternative distributions, which is then used to predict labels of the actual data samples. This method is widely used in practice when the null and alternative are only specified via simulators (as in many scientific experiments). 

We study goodness-of-fit, two-sample ($\TS$) and likelihood-free hypothesis testing ($\LF$), and show that $\cat$ achieves (near-)minimax optimal sample complexity in both the dependence on the total-variation ($\TV$) separation $\epsilon$ and the probability of error $\delta$ in a variety of non-parametric settings, including discrete distributions, $d$-dimensional distributions with a smooth density, and the Gaussian sequence model. In particular, we close the high probability sample complexity of $\LF$ for each class. As another highlight, we recover the minimax optimal complexity of $\TS$ over discrete distributions, which was recently established by \cite{diakonikolas2021optimal}. The corresponding $\cat$ simply compares empirical frequencies in the first half of the data, and rejects the null when the classification accuracy on the second half is better than random. }
\end{abstract}
\end{frontmatter}

\tableofcontents

\section{Introduction}\label{sec:intro}

The rapid development of machine learning over the past three decades has had a profound impact on many areas of science and technology. It has replaced or enhanced traditional statistical procedures and automated feature extraction and prediction where in the past human experts had to intervene manually. One example is the technique that has become known as `classification accuracy testing` (CAT). The idea, first explicitly described in \cite{friedman2004multivariate}, is extremely simple. Consider the setting of two-sample testing: suppose the statistician has samples $X$ and $Y$ of size $n$ from two distributions $\bb P_\sf{X}$ and $\bb P_\mathsf{Y}$ respectively on some space $\cal X$, and wishes to test the hypotheses
\begin{equation}\label{eqn:intro two sample}\tag{$\sf{TS}$}
    H_0: \bb P_\sf{X} = \bb P_\sf{Y} \qquad\text{versus}\qquad H_1 : \bb P_\sf{X} \neq \bb P_\sf{Y}. 
\end{equation}
The statistician has many classical methods at their disposal such as the Kolmogorov-Smirnov or the
Wilcoxon – Mann – Whitney test. Friedman's idea was to use machine learning as a powerful tool to
summarize the data and subsequently apply a classical two-sample test to the transformed data.
More concretely, the proposal is to train a binary classifier $\cal C:\cal X \to \{0,1\}$ on the
labeled data $\cup_{i=1}^n \{(X_i,0), (Y_i,1)\}$ and compare the samples $\cal C(X_1), \dots, \cal
C(X_n)$ and $\cal C(Y_1), \dots, \cal C(Y_n)$. 

Friedman's idea to use classifiers to summarize data before applying classical statistical
analysis downstream can be generalized beyond two-sample testing \eqref{eqn:intro two sample}.
Likelihood-free inference (LFI), also known as simulation-based inference (SBI), has seen a flurry
of interest recently. In LFI, the scientist has a dataset $Z_1,\dots,Z_m
\stackrel{\text{iid}}{\sim} \bb P_{\theta^\star}$ and is given access to a black box simulator
which given a parameter $\theta$ produces a random variable with distribution $\bb P_\theta$. The
goal is to do inference on $\theta^\star$. The key aspect of the problem, lending the name
`likelihood-free`, is that the scientist doesn't know the inner workings of the simulator. In
particular its output is not necessarily differentiable with respect to $\theta$ and the density
of $\bb P_\theta$ cannot be evaluated even up to normalization. This setting arises in numerous
areas of science where highly complex, mechanistic, stochastic simulators are used such as climate
modeling, particle physics, phylogenetics and epidemiology to name a few, and its importance was
realized as early as \cite{diggle1984monte}.  In this paper we study the problem of likelihood-free
hypothesis testing (LFHT) proposed recently in \cite{gerber2022likelihood} as a simplified model of
likelihood-free inference. Compared to two-sample testing, here in addition to the dataset $Z$ of size $m$, we have two
`simulated` samples $X,Y$ of size $n$ each from $\bb P_\sf{X}$ and $\bb P_\sf{Y}$ respectively.
The goal is to test the hypotheses
\begin{equation}\label{eqn:intro lfht}\tag{$\sf{LFHT}$}
    H_0 : Z_i \sim \bb P_\sf{X} \qquad\text{versus}\qquad H_1 : Z_i \sim \bb P_\sf{Y}. 
\end{equation}
It is important that apriori $\bb P_{\sf X}$ and $\bb P_{\sf Y}$ are only known to belong to a certain ambient (usually non-parametric) class. This stands in contrast with the earliest appearances of \eqref{eqn:intro lfht} in \cite{Ziv1988OnCW,gutman1989asymptotically}, where authors studied the rate of decay of the type-I and type-II error probabilities for fixed $\bb P_\sf{X}, \bb P_\sf{Y}$. 

In the context of \eqref{eqn:intro lfht} the idea of Friedman materializes as follows. First,
train a classifier $\cal C:\cal X \to \{0,1\}$ to distinguish between $\bb P_\sf{X}$ and $\bb
P_\sf{Y}$ and second, compare the transformed dataset $\{\cal C(Z_j))\}_{j=1}^m$ to $\{\cal
C(X_i)\}_{i=1}^n$ and $\{\cal C(Y_i)\}_{i=1}^n$. The second step compares iid samples of Bernoulli
random variables (provided $\cal C$ is trained on held out data), thus any reasonable test simply
thresholds the number of $Z_j$ classified as $1$, namely the test is of the form

\begin{equation}\label{eqn:intro scheffe}
    \frac1m \sum_{j=1}^m \cal C(Z_j) \geq \gamma
\end{equation}
for some $\gamma \in [0,1]$. The idea to classify $Z$ as coming from either $\bb P_\sf{X}$ or $\bb P_\sf{Y}$ based on the empirical mass on some separating set $S = \cal C^{-1}(\{1\}) \approx \{\mathrm{d}\bb P_\sf{Y}/\mathrm{d}\bb P_\sf{X}\geq1\}$ has been attributed to Scheff\'e in folklore \cite[Section 6]{devroye2001combinatorial}. To illustrate the genuine importance of these ideas, we draw on the famous Higgs boson discovery. In 2012 \cite{chatrchyan2012observation, adam2015higgs} at the Large Hadron Collider (LHC) a team of physicists announced that they observed the Higgs boson, an elementary particle theorized to exist in 1964. It is regarded as the crowning achievement of the LHC, the most expensive instrument ever built. They achieved this feat via likelihood-free inference, using the ideas of classification accuracy testing/Scheff\'e's test in particular. As part of their analysis pipeline they trained a boosted decision tree classifier on simulated data and thresholded counts of observations falling in the classification region. 

This work was initiated as an attempt to understand the theoretical properties of classifier-accuracy testing, motivated by the clear practical interest in these questions. Our intuition told us that restricting the classifier to have binary output might throw away too much statistical power. In regions with large (small) density ratio, the binary output ought to loose useful information about the (un)certainty of the classifier output. The Neyman-Pearson Lemma phrases this succinctly: the optimal classifier aggregates the log density ratio, while heuristically Scheff\'e's test aggregates indicators that the log density ratio exceeds some threshold. The operational implication of this would be to train probabilistic classifiers $\cal C:\cal X \to \R$ approximating the log density ratio, and to aggregate this $\R$-valued output instead of the binary output. However, our results show that this is not necessary for optimality, at least in the minimax sense.

\subsection{Informal description of the results}\label{sec:intro results}
We study the problems of goodness-of-fit testing, two-sample testing and likelihood-free hypothesis testing in a minimax framework (see Section \ref{sec:fundamental problems} for precise definitions). Namely, given a family of probability distributions $\cal P$, we study the minimum number of observations $n$ (and $m$ for $\LF$) that are required to perform the test with error probability less than $\delta\in(0,1/2)$ in the worst case over the distributions $\bb P_\sf{X}$ and $\bb P_\sf{Y}$. We show for multiple natural classes $\cal P$ that there exist minimax optimal (with some restrictions) classification accuracy tests. 

Let us clarify what we mean by `classification-accuracy' tests for goodness-of-fit testing ($\GoF$) and the problems $\TS$ and $\LF$. Suppose we have a sample $X$ of size $2n$ from the unknown distribution $\P_\sf{X}$. We also have a second sample $Y$  of size $2n$ from $\P_\sf{Y} \in \cal P$ which corresponds to the \emph{known} null distribution in the case of $\GoF$ and is \emph{unknown} in the case of $\TS,\LF$. Finally, for $\LF$ we have an additional sample $Z$ of size $2m$ from $\P_\sf{Z} \in \{\P_\sf{X},\P_\sf{Y}\}$. Write $\cal D_\sf{tr} \eqdef \{X^\sf{tr},Y^\sf{tr},Z^\sf{tr}\}$ for the first halves of each sample and $\cal D_\sf{te} \eqdef \{X^\sf{te},Y^\sf{te}, Z^\sf{te}\}$ for the rest. We train a classifier $\cal C:\cal X \to \{0,1\}$ on the input $\cal D_\sf{tr}$ that aims to assign $1$ to $\bb P_\sf{X}$ and $0$ to $\bb P_\sf{Y}$. Going forward, it will be easier to think of $\cal C$ in terms of the `separating set' $S \eqdef \cal C^{-1}(\{1\})$. Thus, $S$ is a random subset of $\cal X$ whose randomness comes from $\cal D_\sf{tr}$ and potentially an external seed. Given two datasets $\{A_i\}_{i=1}^a,\{B_j\}_{j=1}^b$, we define the classifier-accuracy statistic
\begin{equation}\label{eqn:T_CA def}
    T_S(A,B) \eqdef \frac1a \sum_{i=1}^a \one\{A_i \in S\} - \frac1b\sum_{j=1}^b \one\{B_j \in S\}. 
\end{equation}
The name `classifier-accuracy' is given due to the fact that $T_S (X^\sf{te},Y^\sf{te})+1$ is equal to the sum of the fraction of correctly classified test instances under the two classes. Finally, we say a test is a classifier-accuracy test if its output is obtained by thresholding $|T_S|$ for some classifier $\cal C = \one_S$ on the test data $\cal D_\sf{te}$.

\begin{theorem}[informal]\label{thm:main informal}
    There exist classifier-accuracy tests with minimax (near-)optimal sample complexity for all problems $\GoF,\TS,\LF$ and multiple classes of distributions $\cal P$. 
\end{theorem}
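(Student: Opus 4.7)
The plan is to prove the theorem via a single two-step template that decouples training from testing. Step one uses $\cal D_\sf{tr}$ to produce a random separating set $S \subseteq \cal X$; step two thresholds $|T_S|$ from \eqref{eqn:T_CA def} on $\cal D_\sf{te}$. Conditioned on $\cal D_\sf{tr}$, the terms entering $T_S$ are iid Bernoullis, so the test-stage analysis reduces to one-dimensional binomial concentration controlled by Bernstein's inequality. For the power analysis, it suffices that the data-driven set $S$ enjoys a population-level gap
\begin{equation*}
\Delta(S) \eqdef |\bb P_\sf{X}(S) - \bb P_\sf{Y}(S)| \gtrsim \epsilon, \qquad \epsilon \eqdef \TV(\bb P_\sf{X},\bb P_\sf{Y}),
\end{equation*}
on a high-probability event over $\cal D_\sf{tr}$. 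This template makes the three testing problems collapse onto a common reduction: for $\GoF$, $\bb P_\sf{Y}$ is known so only $X^\sf{tr}$ is used to form $S$ and one tests a single Bernoulli mean against a known target; for $\TS$, both $X^\sf{tr},Y^\sf{tr}$ enter $S$ and one tests equality of two Bernoulli means; for $\LF$, the same $S$ is reused and one compares the empirical mass of $Z$ on $S$ against the two reference masses, with the trade-off between $m$ and $n$ pinned down by matching Bernstein fluctuations across the three test samples.

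The substantive work is thus a class-by-class construction of $S$ achieving $\Delta(S) \gtrsim \epsilon$ with constant probability. For discrete distributions on $[k]$ I would take empirical Scheff\'e, $S = \{i : \hat p_\sf{X}^\sf{tr}(i) > \hat p_\sf{Y}^\sf{tr}(i)\}$, and lower bound $\E \Delta(S)$ by a Poissonization/coupling argument that reproduces the $\sqrt{k}/\epsilon^2$ rate of \cite{diakonikolas2021optimal}; crucially, this recovers the optimal $\TS$ rate using only indicator-valued classifiers, matching the highlight claimed in the abstract. For $\beta$-smooth densities on $[0,1]^d$ I would plug a histogram or kernel estimator from $\cal D_\sf{tr}$ into $S = \{\hat p_\sf{X} > \hat p_\sf{Y}\}$; standard bias-variance bounds for nonparametric density estimation then translate into $\Delta(S) \gtrsim \epsilon$ at the conjectured minimax sample size. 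For the Gaussian sequence model, $S$ is a half-space aligned with the empirical mean-shift, restricted to a data-driven subset of coordinates chosen by a thresholding rule calibrated to the signal-to-noise level. In each case the matching minimax lower bound is inherited from the existing literature.

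The main obstacle is obtaining the correct dependence on $\delta$. The naive boosting approach (majority vote over independent splits) costs a $\log(1/\delta)$ factor which is suboptimal, especially for $\LF$, where closing the high-probability sample complexity is one of the main advertised contributions. The remedy is to pay $\delta$ only at the test stage, where Bernstein's inequality supplies genuine sub-exponential tails in $m$ and $n$, while arranging the training stage so that the event $\{\Delta(S) \gtrsim \epsilon\}$ holds with a constant probability independent of $\delta$. Executing this split tightly across all three problems and all three classes — in particular to trace out the full $(m,n)$ trade-off curve for $\LF$ — is where the bulk of the technical effort lies. A secondary obstacle is handling the composite alternative in $\LF$: one must show that a single $S$ built from $(X^\sf{tr}, Y^\sf{tr})$ simultaneously separates $\bb P_\sf{Z} = \bb P_\sf{X}$ from $\bb P_\sf{Z} = \bb P_\sf{Y}$ against either reference, which follows automatically from the gap property $\Delta(S) \gtrsim \epsilon$ combined with a symmetric thresholding rule.
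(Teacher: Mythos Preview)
Your proposal has the right high-level architecture (train $S$ on $\cal D_\sf{tr}$, threshold $|T_S|$ on $\cal D_\sf{te}$) but contains two genuine gaps that would prevent it from reaching the minimax rates.

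First, the target $\Delta(S)\gtrsim\epsilon$ is simply unachievable at minimax sample sizes. For $\cal P_\sf{Db}$ at $n\asymp\sqrt{k\log(1/\delta)}/\epsilon^2$ the empirical Scheff\'e set separates only by $\sep(\hat S)\asymp\epsilon^2\sqrt{n/k}\ll\epsilon$; demanding $\sep\gtrsim\epsilon$ would force $n\gtrsim k/\epsilon^2$ training samples, well above optimal. The paper's key observation (Lemma~\ref{lemma:test_complexity}) is that a much smaller separation suffices \emph{provided you simultaneously control the size} $\tau(S)=\bb P_\sf{X}(S)\bb P_\sf{X}(S^c)\wedge\bb P_\sf{Y}(S)\bb P_\sf{Y}(S^c)$, via the Bernstein-type test-stage bound $n\gtrsim\frac{\log(1/\delta)}{\sep}\big(1+\frac{\tau}{\sep}\big)$. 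Your plan never mentions $\tau$, yet driving it down is where most of the work lies: it requires deliberately \emph{imbalanced} classifiers (the paper uses the tie-excluding sets $\hat S_>,\hat S_<$ rather than the balanced $\hat S_{1/2}$, and a ``best of $\cal O(\log k)$'' localization over level sets of $q$ for $\cal P_\sf{D}$). Without this ingredient the resulting test is strictly suboptimal in the large-$k$ regime for $\GoF$ and $\LF$.

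Second, your remedy for the $\delta$-dependence --- train with constant success probability, pay $\delta$ only at the test stage --- cannot work: if training yields a good $S$ with probability only $0.9$, the type-II error is bounded below by $0.1$ no matter how sharp the test-stage Bernstein bound is. The paper instead proves that the training step itself achieves the required $(\sep,\tau)$ guarantee with probability $1-\delta$ (see Proposition~\ref{prop:S_1/2 E sep}, Corollary~\ref{cor:sep+tau for P_Db}, Proposition~\ref{prop:finding gauss sepset}), and this is exactly what pins down the $\delta$-dependence of the training sample size. The extra $\log(1/\delta)$ from boosting is avoided not by decoupling train/test failure modes, but by establishing sub-Gaussian concentration of $\sep(\hat S)$ directly over the training randomness (Lemma~\ref{lem:scheffe separation variance}), so that the high-probability separation guarantee already holds at the correct $n$.
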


\subsection{Proof sketch}\label{sec:intro proof sketch}

The bulk of the technical difficulty lies in finding a good separating set $S\subseteq \cal X$. But how do we measure the quality of $S$? Define the ``separation'' $\sep(S) \eqdef \P_\sf{X}(S) - \P_\sf{Y}(S)$, and the ``size'' $\tau(S) \eqdef \min\{\P_\sf{X}(S)\P_\sf{X}(S^\sf{c}), \P_\sf{Y}(S)\P_\sf{Y}(S^\sf{c})\}$. The following lemma describes the performance of classifier-accuracy tests \eqref{eqn:T_CA def} in terms of $\sep$ and $\tau$.

\begin{lemma}\label{lemma:test_complexity}
Consider the hypothesis testing problem $H_0: p=q$ versus an arbitrary alternative $H_1$. Suppose that the learner has constructed a separating set $S$ such that $|\sep(S)|=|p(S)-q(S)|\ge \underline{\sep}$ for every $(p,q)\in H_1$, and $\tau(S)=(p(S)(1-p(S))\land(q(S)(1-q(S)))\le \overline{\tau}$ for every $(p,q)\in H_0 \cup H_1$. Then using only the knowledge of $\overline{\tau}$, the classifier-accuracy test \eqref{eqn:T_CA def} with $n$ test samples from both $p$ and $q$ and an appropriate threshold achieves type-I and type-II errors at most $\delta$, provided that
\begin{align*}
    n \ge c \frac{\log(1/\delta)}{\underline{\sep}}\left( 1 + \frac{\overline\tau}{\underline\sep}\right)
\end{align*}
for a large enough universal constant $c>0$. 
\end{lemma}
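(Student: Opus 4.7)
The proposed test will reject $H_0$ when $|T_S(X^{\sf te}, Y^{\sf te})|$ exceeds $\underline{\sep}/2$, where $T_S$ is the classifier-accuracy statistic from \eqref{eqn:T_CA def}. First I would apply Bernstein's inequality to $n T_S = \sum_{i=1}^n (\mathbbm{1}\{X_i^{\sf te}\in S\}-\mathbbm{1}\{Y_i^{\sf te}\in S\})$, viewed as a sum of $2n$ independent mean-zero random variables bounded by $1$ in absolute value, obtaining
\begin{equation*}
    \P\bigl(|T_S - \E T_S| \geq t\bigr) \leq 2 \exp\!\left(-\frac{n t^2}{2V + 2t/3}\right), \qquad V \eqdef p(S)(1-p(S)) + q(S)(1-q(S)).
\end{equation*}
I plan to apply this twice, tuning $t$ to bound type-I and type-II errors separately.

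The key auxiliary observation is the elementary variance bound
\begin{equation*}
    p(1-p) + q(1-q) \leq 2 \min\{p(1-p), q(1-q)\} + |p - q|,
\end{equation*}
which follows from the identity $|p(1-p) - q(1-q)| = |p - q|\,|1 - p - q| \leq |p - q|$.

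For the type-I error, $p(S) = q(S)$ gives $V = 2p(S)(1-p(S)) \leq 2\overline\tau$ and $\E T_S = 0$; taking $t = \underline{\sep}/2$ in Bernstein and asking the right-hand side to be at most $\delta$ produces the desired sample complexity $n \gtrsim \log(1/\delta)(\overline\tau + \underline\sep)/\underline\sep^2$. For the type-II error, set $\Delta \eqdef |p(S) - q(S)| \geq \underline\sep$: the elementary inequality combined with the hypothesis $\tau(S) \leq \overline\tau$ yields $V \leq 2\overline\tau + \Delta$; choosing $t = \Delta/2$ ensures that whenever $|T_S - \E T_S| \leq t$ one has $|T_S| \geq \Delta/2 \geq \underline\sep/2$, so Bernstein together with $\Delta \geq \underline\sep$ again delivers the same sample complexity $n \gtrsim \log(1/\delta)(\overline\tau/\underline\sep^2 + 1/\underline\sep)$.

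The main obstacle is precisely this variance bound under $H_1$: naively one only has $V \leq 1/2$, which would produce the inferior $n \gtrsim \log(1/\delta)/\underline\sep^2$ missing the crucial $(1+\overline\tau/\underline\sep)$ structure. The elementary inequality extracts exactly the right amount of information from the assumption that only the minimum of the two Bernoulli variances is controlled, absorbing the ``large variance'' side into the separation $\Delta$ that $H_1$ itself provides. Everything else is bookkeeping with constants in Bernstein's inequality.
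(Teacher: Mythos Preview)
Your argument is essentially correct and hinges on the same key observation as the paper: one must bound the \emph{larger} Bernoulli variance $\nu = p(S)(1-p(S)) \vee q(S)(1-q(S))$ using only the smaller one $\tau$ and the separation $|p(S)-q(S)|$. Your elementary inequality $\nu \le \tau + |p-q|$ (from $|p(1-p)-q(1-q)| = |p-q|\,|1-p-q|$) is in fact cleaner than the paper's route, which proves the slightly different bound $a\sqrt{\nu/2} \le a\sqrt{\tau} + a^2 + |p-q|$ via a case analysis (their Lemma~\ref{lem:q(S) enough}) and then packages everything into a separate Bernstein-type lemma (their Lemma~\ref{lem:bernstein master}). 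Your direct application of Bernstein with $V \le 2\overline\tau + \Delta$ under $H_1$ is tighter and avoids the intermediate lemmas.

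There is one genuine discrepancy with the lemma \emph{as stated}: your threshold $\underline{\sep}/2$ requires knowledge of $\underline{\sep}$, whereas the lemma explicitly promises a test ``using only the knowledge of $\overline\tau$''. The paper's threshold is
\[
\sqrt{\frac{c\,\overline\tau\log(1/\delta)}{n}} + \frac{c\log(1/\delta)}{n},
\]
which depends only on $(\overline\tau,n,\delta)$. Under the assumed sample-size condition this threshold is at most a constant times $\underline{\sep}$, so your analysis transfers almost verbatim: under $H_0$ the deviation bound immediately gives acceptance, and under $H_1$ your variance bound $V\le 2\overline\tau+\Delta$ shows the statistic exceeds the threshold. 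This is a one-line fix, but without it you have not proved the stated claim.
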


With Lemma \ref{lemma:test_complexity} in hand it is clear how we need to design $S$. It should satisfy
\begin{equation}\label{eqn:three conditions}
    \big|\sep(S)\big|\text{ is big under $H_1$, and } \tau(S) \text{ is small under both $H_0$ and $H_1$}
\end{equation}
with probability $1-\delta$. The latter condition, namely that $\tau$ is small i.e. $\cal C=\one_S$ is imbalanced, may seem unintuitive as given any two (sufficiently regular) probability distributions there always exists a balanced classifier whose separation is optimal up to constant. 

\begin{proposition}\label{prop:balanced classifier}
Let $\P,\bb Q$ be two distributions on a generic probability space $(\cal X, \mathcal{F})$. Then 
\begin{equation*}
    \TV(\P,\bb Q) \leq 2\sup\{\P(\cal C(X)=0)-\bb Q(\cal C(X)=0):\P(\cal C(X)=0) = \Q(\cal C(X)=1)\}, 
\end{equation*}
where $\cal C: \cal X\to \{0,1\}$ is a possibly randomized classifier. Here the constant $2$ is tight. 
\end{proposition}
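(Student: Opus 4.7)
The plan is to establish the inequality by \textit{dilution}: begin with the Scheff\'e classifier $\one_{S^\star}$, which already achieves the full $\TV$ separation, and mix it with an $X$-independent fair coin so as to enforce the balance condition $\P(\cal C=0) = \bb Q(\cal C=1)$, at the cost of a factor of $2$ in separation. Concretely, I would pick a Scheff\'e set $S^\star$ with $\P(S^\star) - \bb Q(S^\star) = \TV(\P, \bb Q)$ and set $a \eqdef \P(S^\star)$, $b \eqdef \bb Q(S^\star)$. The classifier $\cal C$ is then defined by flipping an independent fair coin: on heads return $\one_{(S^\star)^c}(X)$, on tails return an $X$-independent $\ber(r)$ variable with $r \eqdef (a+b)/2 \in [0,1]$. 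A short computation will verify that this choice of $r$ is exactly what enforces $\P(\cal C=0) = \bb Q(\cal C=1)$, and that the resulting separation $\P(\cal C=0) - \bb Q(\cal C=0)$ equals $(a-b)/2 = \TV(\P,\bb Q)/2$, giving the claimed bound.

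For tightness of the constant $2$, I would exhibit the one-parameter family $\P = \delta_0$ and $\bb Q_p = (1-p)\delta_0 + p\delta_1$ on $\{0,1\}$ and let $p \downarrow 0$. Any randomized classifier is parameterised by $\mu_0 \eqdef \P(\cal C(0)=0)$ and $\mu_1 \eqdef \P(\cal C(1)=0)$; the balance condition then reduces to the affine constraint $(2-p)\mu_0 + p\mu_1 = 1$, and the separation simplifies to $2\mu_0 - 1$. Maximising this linear objective over the box $[0,1]^2$ subject to a single affine constraint is immediate and yields supremum $p/(2-p)$, whereas $\TV(\P,\bb Q_p) = p$. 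Hence $\TV(\P, \bb Q_p)/\sup = 2 - p \to 2$ as $p \downarrow 0$.

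The only real subtlety I expect is in the tightness half: no single pair $(\P, \bb Q)$ can make the inequality sharp, since the trivial cases $\TV \in \{0,1\}$ fail to witness it. One must take a limit in which $\bb Q$ degenerates toward $\P$, so that the balance constraint becomes highly restrictive relative to the mass that Scheff\'e would place on the separating set. Once this regime is identified, the computation reduces to elementary linear programming in two variables.
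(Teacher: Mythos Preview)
Your proof is correct and takes a genuinely different (and in several respects simpler) route than the paper's.

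For the inequality, the paper proceeds by a level-set construction: it defines $E_t = \{x : (p(x)-q(x))/(p(x)+q(x)) \ge t\}$, locates the threshold $t^\star$ at which $\P(E_{t^\star}) + \bb Q(E_{t^\star})$ crosses $1$, and randomizes only on the boundary set $E_{t^\star} \setminus E_{(t^\star)^+}$. The key step is then a monotonicity lemma showing that $t \mapsto (\P(E_t)-\bb Q(E_t))/(\P(E_t)+\bb Q(E_t))$ is nondecreasing, from which the bound follows after plugging in $\P(E_0)+\bb Q(E_0) \le 2$. Your dilution argument bypasses all of this: mixing the Scheff\'e indicator with an $X$-independent coin of the right bias hits the balance constraint and the bound $\TV/2$ in two lines. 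What the paper's approach buys is a classifier that is deterministic off a null-like boundary and whose separation is in fact $\TV/(\P(E)+\bb Q(E)) \ge \TV/2$, so it is sharper instance-by-instance; what your approach buys is elementarity and the absence of any need for densities or the auxiliary ratio lemma.

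For tightness, the paper exhibits a continuous pair (uniform on $[0,1]$ versus a rescaled uniform) together with a specific threshold classifier, and asserts the limit without fully verifying that no balanced classifier does better. Your two-point example with the explicit linear program over $(\mu_0,\mu_1) \in [0,1]^2$ is more self-contained: you actually compute the supremum over \emph{all} randomized balanced classifiers and obtain the exact ratio $2-p$. Both examples work, but yours leaves less to the reader.
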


Despite Proposition \ref{prop:balanced classifier}, we find that choosing a highly imbalanced classifier $\cal C$ is crucial in obtaining the minimax sample complexity in some classes. This has  interesting implications for practical classifier-accuracy testing. Indeed, classifiers are commonly trained to minimize some proxy of misclassification error; however, the above heuristics show that this is not necessarily optimal, instead one should seek \emph{imbalanced} classifiers with large separation. Another way to phrase it is that when training a classifier for testing one should have the downstream task in mind, namely, maximizing the power of the resulting test, and not classification accuracy.

\subsection{Prior work and contribution} The problem of two-sample ($\sf{TS}$) testing (aka
closeness testing) and the related problem of goodness-of-fit ($\sf{GoF}$) testing (aka identity
testing) has a long history in both statistics and computer science. We only
mention a small subset of the literature, directly relevant to our work. In seminal works Ingster studied
($\sf{GoF}$) for the Gaussian sequence model  \cite{ingster1982minimax,ingster2003nonparametric}
and for smooth densities \cite{ingster1987minimax} in one dimension. Extensions to multiple
dimensions and ($\sf{TS}$) can be found in works such as \cite{li2019optimality,arias2018remember}.
For discrete distributions on a large alphabet the two problems appeared  first in
\cite{goldreich2011testing,batu2000testing}, see also \cite{chan2014optimal,valiant2017automatic} and the survey \cite{canonne2020survey}. Recent work \cite{diakonikolas2018sample,diakonikolas2021optimal} has focused on $\sf{GoF}$ and $\sf{TS}$ with vanishing error probability.

The problem of likelihood-free hypothesis testing appeared first in the works \cite{Ziv1988OnCW,gutman1989asymptotically}, who studied the asymptotic setting. Minimax likelihood-free hypothesis testing ($\sf{LFHT}$) was first studied by the information theory community in \cite{kelly2010universal,kelly2012classification} for a restricted class of discrete distributions on a large alphabet, with a strengthening by \cite{huang2012classification} to vanishing error probability (in some regimes). More recently, the problem was proposed in \cite{gerber2022likelihood} as a simplified model of likelihood-free inference, and authors derived minimax optimal sample complexities for constant error in the settings studied in the present paper. 

The idea of using classifiers for two-sample testing was proposed in \cite{friedman2004multivariate} and has seen a flurry of interest \cite{golland2003permutation,lopez2016revisiting,kim2021classification,hediger2022use}. In likelihood-free inference the output of classifiers can be used as summary statistics for Approximate Bayesian Computation \cite{jiang2017learning,gutmann2018likelihood} or to approximate density ratios \cite{cranmer2020frontier} via the 'likelihood-ratio trick'. A classifier with binary $\{0,1\}$ output was used in the discovery of the Higgs boson \cite{chatrchyan2012observation,adam2015higgs} to determine the detection region. 

Our work is the first to study the non-asymptotic properties of classifier-based tests in any setting and we find that classifier-accuracy tests are minimax optimal for a wide range of problems. As a consequence of our results we resolve the minimax high probability sample complexity of $\LF$ over all classes studied, and also obtain new, tight results on high probability $\GoF$ and $\TS$.

\subsection{Structure}
In Sections \ref{sec:fundamental problems} and \ref{sec:nonparametric classes} we define the statistical problems and distribution classes we study. In Tables \ref{table:prior minimax} and \ref{table:P_D} we present all sample complexity results, and in Section \ref{ssec:minimax complexities} we indicate how to derive them. Sections \ref{sec:discrete sep}, \ref{sec:smooth sep sets} and \ref{sec:gauss sep sets} study the problem of learning good separating sets for discrete and smooth distributions and the Gaussian sequence model respectively. The appendix contains all proofs omitted from the main text, including all lower bounds in Appendix \ref{section:lower bds}.

\section{Results}\label{sec:results}

\subsection{Technical preliminaries}
\subsubsection{Two-sample, goodness-of-fit and likelihood-free hypothesis testing}\label{sec:fundamental problems}
Formally, we define a hypothesis as a set of probability measures. Given two hypotheses $H_0$ and $H_1$ consisting of distributions on some measurable space $\cal X$, we say that a function $\psi:\cal X \to \{0,1\}$ tests
the two hypotheses against each other with error at most $\delta \in (0,1/2)$ if
\begin{equation}\label{eqn:successful test}
\max\limits_{i=0,1} \max\limits_{P \in H_i} \bb P_{S \sim P}(\psi(S) \neq i) \leq \delta.
\end{equation}
Throughout the remainder of this section let $\cal P$ be a class of probability distributions on $\cal X$. Suppose we observe independent samples $X \sim \bb P_{\sf{X}}^{\otimes n}$, $Y \sim \bb P_{\sf{Y}}^{\otimes n}$
and $Z\sim\bb P_{\sf{Z}}^{\otimes m}$ whose distributions $\bb P_{\sf{X}},\bb P_{\sf{Y}}, \bb P_{\sf{Z}} \in \cal P$ are
 \it{unknown} to us. We now define the problems at the center of our work. 

\begin{definition}
Given a known $\bb P_0 \in \cal P$, \textbf{goodness-of-fit testing} is the comparison of
\begin{equation}\label{eqn:GoF definition}\tag{$\sf{GoF}$}
H_0: \bb P_\sf{X}=\bb P_0 \qquad\text{versus}\qquad H_1: \TV(\bb P_\sf{X}, \bb P_0) \geq \epsilon
\end{equation}
based on the sample $X$. Write $n_\GoF(\epsilon, \delta, \cal P)$ for the smallest number such that for all $n \geq n_\sf{TS}$ there exists
a function $\psi : \cal X^n \to \{0,1\}$ which given $X$ as input tests between
$H_0$ and $H_1$ with error probability at most $\delta$, for arbitrary $\bb P_\sf{X},\bb P_0\in\cal P$. 
\end{definition}

\begin{definition}
\textbf{Two-sample testing} is the comparison of
\begin{equation}\label{eqn:TS definition}\tag{$\sf{TS}$}
H_0: \bb P_\sf{X}=\bb P_\sf{Y} \qquad\text{versus}\qquad H_1: \TV(\bb P_\sf{X}, \bb P_\sf{Y}) \geq \epsilon
\end{equation}
based on the samples $X,Y$. Write $n_\sf{TS}(\epsilon, \delta, \cal P)$ for the smallest number such that for all $n \geq n_\sf{TS}$ there exists
a function $\psi : \cal X^n \times \cal X^n \to \{0,1\}$ which given $X,Y$ as input tests between
$H_0$ and $H_1$ with error probability at most $\delta$, for arbitrary $\bb P_\sf{X},\bb P_\sf{Y}\in\cal P$.
\end{definition}

\begin{definition}
\textbf{Likelihood-free hypothesis testing} is the comparison of
\begin{equation}\label{eqn:LFHT definition}\tag{$\sf{LF}$}
H_0: \bb P_\sf{Z}=\bb P_\sf{X} \qquad\text{versus}\qquad H_1: \bb P_\sf{Z}=\bb P_\sf{Y}
\end{equation}
based on the samples $X,Y,Z$. Write $\cal R_\sf{LF}(\epsilon, \delta, \cal P) \subseteq \bb R^2$ for the maximal set such that for all $(n,m)\in\bb N^2$ with $n\geq x, m\geq y$ for some $(x,y)\in\cal R_\sf{LF}$, there exists a function $\psi : \cal X^n \times \cal X^n\times\cal X^m \to \{0,1\}$ which given $X,Y,Z$ as input,
successfully tests $H_0$ against $H_1$ with error probability at most $\delta$, provided $\TV(\bb P_\sf{X}, \bb P_\sf{Y}) \geq \epsilon$ and $\bb P_\sf{X}, \bb P_\sf{Y} \in \cal P$.
\end{definition}

\subsubsection{Classes of distributions}\label{sec:nonparametric classes}
We consider the following nonparametric families of distributions. 

\noindent \textbf{Smooth density.} Let $\cal C(\beta, d, C)$ denote the set of functions $f:[0,1]^d \to \bb R$ that are $\lceil \beta-1\rceil$-times differentiable and satisfy
\begin{align*}
    \|f\|_{\cal C_\beta} \eqdef \max\left(\max\limits_{0\leq |\alpha| \leq \lceil \beta-1\rceil} \|f^{(\alpha)}\|_\infty, \sup\limits_{x\neq y \in [0,1]^d, |\alpha|=\lceil \beta-1\rceil} \frac{|f^{(\alpha)}(x)-f^{(\alpha)}(y)|}{\|x-y\|^{\beta-\lceil \beta-1\rceil}_2}\right) \leq C, 
\end{align*}
where $\lceil \beta-1\rceil$ denotes the largest integer strictly smaller than $\beta$ and $|\alpha|=\sum_{i=1}^d\alpha_i$ for the multiindex $\alpha \in \N^d$. We write $\cal P_\sf{H}(\beta, d, C_\sf{H})$ for the class of distributions with Lebesgue-densities in $\cal C(\beta, d, C_\sf{H})$.

\noindent \textbf{Distributions on a finite alphabet.} For $k \in \bb N$, let
\begin{align*}
  \cal P_\sf{D}(k) &\eqdef \{\textrm{all distributions on the finite alphabet }[k]\}, \\
    \cal P_\sf{Db}(k, C_\sf{Db}) &\eqdef \{p \in \cal P_\sf{D}(k): \|p\|_\infty \leq C_\sf{Db}/k\},
\end{align*}
where $C_\sf{Db} > 1$ is a constant. In other words, $\cal P_\sf{Db}$ are those discrete
distributions that are bounded by a constant multiple of the uniform distribution. 

\noindent \textbf{Gaussian sequence model on the Sobolev ellipsoid.}
Define the Sobolev ellipsoid $\cal E(s, C)$ of smoothness $s > 0$ and size $C>0$ as
$\{\theta\in\bb R^{\bb N}: \sum_{j=1}^\infty j^{2s} \theta_j^2 \leq C\}.$
For $\theta \in \R^\infty$ let $\mu_\theta = \otimes_{i=1}^\infty \cal N(\theta_i, 1)$, and define our second class as
\begin{equation*}
    \cal P_\sf{G}(s, C_\sf{G}) \eqdef \left\{\mu_\theta\,: \theta \in \cal E(s, C_\sf{G})\right\}.
\end{equation*}
To briefly motivate the study of $\cal P_\sf{G}$, consider the classical Gaussian white noise model. Here we have iid observations of the stochastic process
\begin{equation*}
    \D Y_t = f(t)\D t + \D W_t,\,\,t\in[0,1],
\end{equation*}
where $(W_t)_{t\geq0}$ denotes Brownian motion and $f\in L^2[0,1]$ is unknown. Suppose now that $\{\phi_i\}_{i\geq1}$ forms an orthonormal basis for $L^2[0,1]$ and given an observation $Y$ define the values
\begin{align*}
    y_i &\eqdef \left\langle Y, \phi_i\right\rangle = \int_0^1 f(t) \phi_i(t) \D t +  \int_0^1 \phi_i(t)\D W_t \eqdef \theta_i + \epsilon_i. 
\end{align*}
Notice that $\epsilon_i \sim \cal N(0,1)$ and that $\E[\epsilon_i\epsilon_j] = \one_{i=j}$. In other words, the sequence $\{y_i\}_{i\geq1}$ is an observation from the distribution $\mu_\theta$. Consider the particular case of $\phi_1\equiv1$ and $\phi_{2k}=\sqrt 2\cos(2\pi kx),\phi_{2k+1}=\sqrt2\sin(2\pi kx)$ for $k\geq1$ and assume that $f$ satisfies periodic boundary conditions. Then $\theta$ denotes the Fourier coefficients of $f$ and the condition that $\sum_{j=1}^\infty j^{2s}\theta_j^2 \leq C$ is equivalent to an upper bound on the order $(s,2)$-Sobolev norm of $f$, see e.g. Proposition 1.14 of \cite{tsybakov}. In other words, by studying the class $\cal P_\sf{G}$ we can deduce results for signal detection in Gaussian white noise, where the signal has bounded Sobolev norm.

\begin{table}[!h]\caption[]{Minimax sample complexity of testing (up to constant factors) over $\cal P_\sf{H}, \cal P_\sf{G}, \cal P_\sf{Db}$.}
\label{table:prior minimax}
\def\arraystretch{2}

\fontsize{12pt}{12pt}\selectfont
\begin{center}
\begin{tabular}{ c|c |c | c }
 & \cellcolor{gray!10}$n_\GoF$&\cellcolor{gray!10}$n_\sf{TS}$ &\cellcolor{gray!10} $\cal R_\sf{LF}$ \\ [0.5ex]
 \hline

 \cellcolor{gray!10}
 $\cal P_\sf{Db}(k)$ & \cellcolor{green!0}$\frac{\sqrt{k\log(1/\delta)}}{\epsilon^2} + \frac{\log(1/\delta)}{\epsilon^2}$& \cellcolor{green!0}$n_\GoF$ & \cellcolor{green!0}$m\geq\frac{\log(1/\delta)}{\epsilon^{2}}\text{ and } n\geq n_\sf{GoF}\text{ and } n m \geq n_\GoF^2$ \\
 \hline

\cellcolor{gray!10}
$\cal P_\sf{H}(\beta, d)$ & \cellcolor{green!0}$\frac{\sqrt{\log(1/\delta)}}{\epsilon^{(2\beta+d/2)/\beta}} + \frac{\log(1/\delta)}{\epsilon^2}$ & \cellcolor{green!0}$n_\GoF$ & \cellcolor{green!0}$m\geq\frac{\log(1/\delta)}{\epsilon^{2}}\text{ and } n\geq n_\sf{GoF}\text{ and } n m \geq n_\GoF^2$ \\
 \hline

 \cellcolor{gray!10}
 $\cal P_\sf{G}(s)$ & \cellcolor{green!0}$\frac{\sqrt{\log(1/\delta)}}{\epsilon^{(2s+1/2)/s}} + \frac{\log(1/\delta)}{\epsilon^2}$&\cellcolor{green!0} $n_\GoF$ &\cellcolor{green!0} $m\geq\frac{\log(1/\delta)}{\epsilon^{2}}\text{ and } n\geq n_\sf{GoF}\text{ and } n m \geq n_\GoF^2$ \\

\end{tabular}
\end{center}
\end{table}

\begin{table}
\begin{center}
\caption{Minimax sample complexity of testing (up to constant factors) over $\cal P_\sf{D}$.}
\label{table:P_D}
\def\arraystretch{2}
\fontsize{9pt}{9pt}\selectfont
\begin{NiceTabular}{ccccc}[hvlines-except-borders,rules/width=.2pt]

    \RowStyle{\bfseries}
     & \cellcolor{gray!10}{\large $n_\GoF(\cal P_\sf{D})$} & \cellcolor{gray!10}{\large $n_\TS(\cal P_\sf{D})$} & \cellcolor{gray!10}\Block{1-2}{{\large $\cal R_\sf{LF}(\cal P_\sf{D})$}} &\cellcolor{gray!10}\\
    
    \cellcolor{gray!10}\Block{3-1}{{$k\geq\frac{\log\left(\frac{1}{\delta}\right)}{\epsilon^4}$}} & \Block{2-1}{$(\sf{OPT})\,n_\GoF(\cal P_\sf{Db})$} & \cellcolor{green!0}\Block{3-1}{\cellcolor{green!0}$\left(\frac{k^{2}\log\left(\frac{1}{\delta}\right)}{\epsilon^{4}}\right)^{\frac13}$} & \cellcolor{gray!10}$n\geq m$ & \cellcolor{green!0}$m\geq \frac{\log(1/\delta)}{\epsilon^2}\text{ and }m\min\{n^2/k,n\} \geq n_\GoF^2$\\
    
    \cellcolor{gray!10}& & \cellcolor{green!0}& \cellcolor{gray!10}\Block{2-1}{$m > n$} & $(\sf{OPT})\, mn^2 \geq kn^2_\GoF \text{ and }n \geq n_\sf{GoF}$ \\
    
    \cellcolor{gray!10}& $(\sf{CAT})\,n_\GoF\left(\frac{\epsilon}{\textcolor{red}{\log(k)}}, \frac{\delta}{\textcolor{red}{k}}, \cal P_\sf{Db}\right)$ & \cellcolor{green!0}& \cellcolor{gray!10} & \cellcolor{red!0} ($\sf{CAT}$) \begin{tabular}{c} $\frac{mn^2}{\textcolor{red}{\log\left(\frac k\delta \right)}} \geq kn^2_\sf{GoF}\left(\frac{\epsilon}{\textcolor{red}{\log(k)}}, \frac{\delta}{\textcolor{red}{k}}\right)$ \\\text{and } $n \geq n_\sf{GoF}(\frac{\epsilon}{\textcolor{red}{\log(k)}}, \frac{\delta}{\textcolor{red}{k}})$ \end{tabular} \\
    
    \cellcolor{gray!10}{$k<\frac{\log\left(\frac{1}{\delta}\right)}{\epsilon^4}$ }&\cellcolor{green!0} $n_\GoF(\cal P_\sf{Db})$ & \cellcolor{green!0}$n_\GoF(\cal P_\sf{Db})$ & \cellcolor{green!0}\Block{1-2}{$m\geq\frac{\log(1/\delta)}{\epsilon^2}\text{ and } n\geq n_\GoF \text{ and } n m \geq n_\GoF^2$} & \cellcolor{green!0}
\end{NiceTabular}
\end{center}
\end{table}

\subsection{Minimax sample complexity of classifier-accuracy tests}\label{ssec:minimax complexities}
In Tables\footnote{We thank Gunjan Kumar for pointing out an error in the top right cell of Table \ref{table:P_D} in a previous version.} \ref{table:prior minimax} and \ref{table:P_D} we present our and prior results on the minimax sample complexity of $\GoF,\TS$ and $\LF$; here 
\begin{itemize}
    \item unmarked entries denote minimax optimal results achievable by a classifier-accuracy test; 
    \item entries marked with $(\sf{OPT})$ denote minimax optimal results that are not known to be achievable by any classifier-accuracy test; 
    \item entries marked with $(\sf{CAT})$ denote the best known result using a classifier-accuracy test. 
\end{itemize}

In the constant error regime ($\delta=\Theta(1)$) the results of Tables \ref{table:prior minimax} and \ref{table:P_D} are well known; for instance, the sample complexities of $\GoF$, $\TS$, and $\LF$ under $\cal P_{\sf D}$ were characterized in \cite{paninski2008coincidence,bhattacharya2015testing,gerber2022likelihood}, respectively\footnote{\cite{gerber2022likelihood} only resolved the minimax sample complexity of $\LF$ for $\cal P_\sf{D}$ up to $\log(k)$-factors in some regimes. However, by combining the classifier accuracy tests of this paper for $m \leq n$ and the reduction to two-sample testing with unequal sample size \cite{bhattacharya2015testing, diakonikolas2021optimal} for $m > n$ these gaps are filled.}. Less is known under the high-probability regime ($\delta=o(1)$): for $\cal P_{\sf D}$, $n_\GoF$ was characterized in \cite{huang2013generalized, diakonikolas2018sample} for uniformity testing, with the general case following from the flattening reduction \cite{diakonikolas2016new}; $n_\TS$ was characterized in \cite{diakonikolas2021optimal}. For $\cal R_\sf{LF}$, the $k>n$ case for $\cal P_\sf{Db}$ is resolved by \cite{huang2012classification}, and the achievability direction of the case $m > n$ of $\cal R_\sf{LF}$ for $\cal P_\sf{D}$ can be deduced from \cite{diakonikolas2021optimal} via the natural reduction between $\TS$ and $\LF$ (see \cite{gerber2022likelihood}). The remaining upper bounds are achievable by the classifier-accuracy tests below, and the proofs of all lower bounds are deferred to Appendix \ref{section:lower bds}. 

As for the efficacy of classifier-accuracy tests, the upper bounds in Tables \ref{table:prior minimax} and \ref{table:P_D} follow from the combination of Lemma \ref{lemma:test_complexity} and the following results: 
\begin{itemize}
    \item $\pmb{\cal P_\sf{Db}}:$ see Corollary \ref{cor:sep+tau for P_Db}; 
    \item $\pmb{\cal P_\sf{H}}:$ see Section \ref{sec:smooth sep sets} and Corollary \ref{cor:sep+tau for P_Db}; 
    \item $\pmb{\cal P_\sf{G}}:$ see Proposition \ref{prop:finding gauss sepset}; 
    \item $\pmb{\cal P_\sf{D}}:$ for $\GoF$, see Proposition \ref{prop:S_1/2 E sep} if $k < \log(1/\delta)/\epsilon^4$, and Proposition \ref{prop:sep + tau guarantees P_D} otherwise; for $\TS$, see Proposition \ref{prop:S_1/2 E sep}; for $\LF$, see Proposition \ref{prop:S_1/2 E sep} if $n\ge k \wedge m$, and Section \ref{sec:best of log(k)} and Proposition \ref{prop:sep + tau guarantees P_D} otherwise. 
\end{itemize}

\section{Learning separating sets}\label{sec:learning sep sets}
In this section, we construct the separating sets $S$ used in the classifier-accuracy test \eqref{eqn:T_CA def}. Section \ref{sec:discrete sep} is devoted to discrete distribution models $\cal P_{\sf{Db}}$ and $\cal P_{\sf D}$, where we need a delicate tradeoff between the expected separation and the size of $S$. A similar construction in the Gaussian sequence model $\cal P_{\sf G}$ is presented in Section \ref{sec:gauss sep sets}. 

\subsection{The discrete case}\label{sec:discrete sep}
Given two iid samples $X,Y$ of sizes $N_X, N_Y \stackrel{iid}{\sim} \poi(n)$ from unknown discrete distributions $p=(p_1,\dots,p_k),q=(q_1,\dots,q_k)$ over a finite alphabet $[k]=\{1,2,\dots,k\}$, can we learn a set $\hat S \subseteq [k]$ using $X,Y$ that separates $p$ from $q$? To measure the quality of a given separating set $A\subseteq [k]$, we define two quantities $\sep(A) \eqdef p(A)-q(A)$ and $\tau(A) \eqdef \min\{p(A)p(A^\sf{c}), q(A)q(A^\sf{c})\}$. Intuitively, the first quantity $\sep(A)$ measures the separation of $A$, and the second quantity $\tau(A)$ measures the size of $A$. Recall that by Lemma \ref{lemma:test_complexity}, in order to perform the classifier-accuracy test \eqref{eqn:T_CA def}, we aim to find a separating set $\hat S$ such that
\begin{equation}\label{eqn:desired sep set}
    |\sep(\hat S)| \text{ is large and } \tau(\hat S) \text{ is small.}
\end{equation}
The rest of this section is devoted to the construction of $\hat S$ satisfying \eqref{eqn:desired sep set}, and we will present our results on learning separating sets in order of increasing complexity. 

\emph{Notation:} for a random variable $X$ we write $\sigma^2(X)$ for the optimal sub-Gaussian variance proxy of $X$. In other words, $\sigma^2(X)$ is the smallest value such that $\E\exp(\lambda (X-\E X)) \leq \exp(\lambda^2\sigma^2(X)/2)$ holds for all $\lambda \in \R$. 

\subsubsection{A natural separating set}\label{sec:trivial classifier}
Let $\{X_i,Y_i\}_{i \in [k]}$ be the empirical frequencies of each bin $i \in [k]$ in our samples $X,Y$, i.e. $nX_i\sim \poi(np_i)$ and $nY_i\sim \poi(nq_i)$. A natural separating set is the following: 
\begin{align*}
    \hat S_{1/2} &\eqdef \{i: X_i > Y_i \text{ or } X_i=Y_i \text{ and } C_i=1\}, 
\end{align*}
where $C_1,C_2\dots C_k$ are iid $\ber(1/2)$ random variables. We use the subscript ``$1/2$'' to illustrate our tie-breaking rule: when $X_i = Y_i$, the symbol $i$ is added to the set with probability $1/2$. 

Our first result concerns the separating power of the above set. 
\begin{proposition}\label{prop:S_1/2 E sep}
Suppose $p,q \in \cal P_\sf{D}(k)$ with $\TV(p,q)\ge \epsilon$. There exists a universal constant $c>0$ such that 
\begin{align*}
    \P\left(\sep(\hat S_{1/2}) \geq c\epsilon^2\left(\frac nk \land \sqrt{\frac nk} \land \frac1\epsilon\right)\right) \geq 1-\delta, 
\end{align*} 
provided $n \geq \frac1c n_\sf{TS}(\epsilon, \delta, \cal P_\sf{D}(k))$. 
\end{proposition}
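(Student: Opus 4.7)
The plan is to use Poissonization so that the indicators $B_i := \one\{i \in \hat S_{1/2}\}$ are mutually independent across $i \in [k]$ (each $B_i$ is a function of the independent pair $(nX_i, nY_i)$ and a private coin $C_i$). Setting $\Delta_i := p_i - q_i$, we then have $\sep(\hat S_{1/2}) = \sum_{i=1}^k \Delta_i B_i$, a sum of independent bounded variables, and the proposition reduces to (i) lower bounding $\E[\sep(\hat S_{1/2})]$ by the claimed quantity and (ii) concentrating $\sep(\hat S_{1/2})$ around its mean.

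For (i), by definition of $\hat S_{1/2}$ we have $\P(B_i=1) = \tfrac12(1+g_i)$ where $g_i := \P(nX_i > nY_i) - \P(nX_i < nY_i)$ is the sign-expectation of the Skellam $\poi(np_i) - \poi(nq_i)$. Since $\sum_i \Delta_i = 0$, this yields the clean identity $\E[\sep(\hat S_{1/2})] = \tfrac12 \sum_i |\Delta_i|\,|g_i|$, where I use $\mathrm{sgn}(g_i) = \mathrm{sgn}(\Delta_i)$ (immediate from the coupling $\poi(np_i) \stackrel{d}{=} \poi(nq_i) + \poi(n|\Delta_i|)$ when $p_i \ge q_i$). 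The key probabilistic estimate is $|g_i| \gtrsim \min(1, n|\Delta_i|/\sqrt{1 + n(p_i+q_i)})$, which I would prove by treating the small-mean regime (where the Skellam concentrates on $\{0,\pm 1\}$ so $|g_i| \asymp n|\Delta_i|$) and the large-mean regime (Gaussian CLT, $|g_i| \asymp |\Delta_i|\sqrt{n/(p_i+q_i)}$ capped at $1$) separately. Splitting bins by whether the $\min$ equals $1$, we either obtain $\E[\sep] \gtrsim \epsilon$ directly, or else Cauchy--Schwarz with weights $\sqrt{1+n(p_i+q_i)}$ (together with $\sum_i \sqrt{1+n(p_i+q_i)} \lesssim k + \sqrt{nk}$) yields $\E[\sep] \gtrsim n\epsilon^2/(k+\sqrt{nk})$; combining gives $\E[\sep(\hat S_{1/2})] \gtrsim \epsilon^2 \min(n/k, \sqrt{n/k}, 1/\epsilon)$.

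The concentration step (ii) is the main technical obstacle: a naive Bernstein/Hoeffding bound with $V \le \tfrac14 \sum \Delta_i^2$ and $M = \max_i |\Delta_i|$ is not tight enough under the hypothesis $n \ge n_\TS(\epsilon,\delta,\cal P_\sf{D}(k))/c$, because a few heavy bins can make $M$ of constant order. The plan is to split bins by $r_i := n|\Delta_i|/\sqrt{1+n(p_i+q_i)}$ into ``heavy'' ($r_i$ above a threshold of order $\sqrt{\log(1/\delta)}$) and ``light''. For heavy bins the Skellam is essentially deterministic since $1 - g_i^2 \lesssim e^{-c r_i^2}$, and a carefully-grouped union bound (dyadic in $r_i^2$, to avoid a $\log k$ loss) gives $B_i = \one\{\Delta_i > 0\}$ simultaneously for all heavy $i$ with probability $\ge 1 - \delta/2$, so the heavy contribution is exactly $\sum_{i \in H}|\Delta_i|$ on this event. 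For light bins one has by construction $|\Delta_i| \lesssim \sqrt{\log(1/\delta)(p_i+q_i)/n}$, hence $\sum_{i \in L} \Delta_i^2 \lesssim \log(1/\delta)/n$ and $M_L \lesssim \sqrt{\log(1/\delta)/n}$; Bernstein with these refined bounds, combined with the $\E[\sep]$ lower bound from (i), yields $\sep(\hat S_{1/2}) \ge \tfrac12 \E[\sep(\hat S_{1/2})]$ with probability $\ge 1 - \delta/2$. A short de-Poissonization argument removes the Poisson sampling assumption and recovers the claim for fixed $n$.
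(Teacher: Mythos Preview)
Your treatment of step (i) is correct and matches the paper exactly: the paper's Lemma~\ref{lem:E lower} is precisely your Skellam estimate $|g_i|\gtrsim \min\bigl(1,\,n|\Delta_i|/\sqrt{1+n(p_i\wedge q_i)}\bigr)$, and the double Cauchy--Schwarz extracting $\epsilon^2\min(n/k,\sqrt{n/k},1/\epsilon)$ is identical.

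For step (ii), however, the paper does something both simpler and sharper, and your route as written loses logarithmic factors. The paper (Lemma~\ref{lem:scheffe separation variance}) bounds the sub-Gaussian variance proxy of $\sep(\hat S_{1/2})=\sum_i\Delta_i B_i$ \emph{directly}, without any event-level heavy/light split. The point you are missing is that the optimal sub-Gaussian variance proxy of $\ber(\mu)$ is not $1/4$ but $\tfrac{1/2-\mu}{\log(1/\mu-1)}$, which is $\lesssim 1/\log\bigl(1/(\mu\wedge(1-\mu))\bigr)$ when $\mu$ is near $0$ or $1$. For a bin with $n(p_i-q_i)^2/(p_i+q_i)$ large (your ``heavy'' bins), $\P(B_i\text{ wrong})\le 2e^{-cn\Delta_i^2/(p_i+q_i)}$, so the per-bin proxy is $\lesssim\Delta_i^2\cdot\frac{p_i+q_i}{n\Delta_i^2}=(p_i+q_i)/n$; for the remaining bins the proxy is $\le\Delta_i^2/4$, which by definition of ``light'' is again $\lesssim(p_i+q_i)/n$. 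Summing gives $\sigma^2\lesssim\sum_i\Delta_i^2\wedge\tfrac{p_i+q_i}{n}\le 2/n$, and the sub-Gaussian tail then requires only $\E\sep\gtrsim\sqrt{\log(1/\delta)/n}$, which rearranges exactly to $n\gtrsim n_\TS$.

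Your approach instead places the heavy/light threshold at $r_i\asymp\sqrt{\log(1/\delta)}$, which forces the light-bin Hoeffding variance $\sum_{i\in L}\Delta_i^2$ to be $\cal O(\log(1/\delta)/n)$ rather than $\cal O(1/n)$; the resulting deviation is $\log(1/\delta)/\sqrt n$ instead of $\sqrt{\log(1/\delta)/n}$, so your condition becomes $n\gtrsim\sqrt{\log(1/\delta)}\cdot n_\TS$ rather than $n\gtrsim n_\TS$. Separately, the dyadic union bound for heavy bins does not actually avoid a logarithmic loss: the only available count control is $|\{i:r_i\ge t\}|\le \min(k,\,2n/t)$ (via $|\Delta_i|\ge r_i/n$ and $\sum|\Delta_i|\le 2$), so the level-$0$ contribution is $\min(k,\,n/\sqrt{\log(1/\delta)})\cdot\delta^{cC^2}$, which need not be $\le\delta$ without inflating $C$ by $\sqrt{\log(k\wedge n)/\log(1/\delta)}$. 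The fix for both issues is the same: move the heavy/light distinction from the event level to the variance-proxy level, as the paper does.

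Finally, no de-Poissonization is needed: the paper works in the Poissonized model throughout (sample sizes are $\poi(n)$ by definition).
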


Together with the trivial upper bound $\tau(\hat S_{1/2})\le 1/4$, Proposition \ref{prop:S_1/2 E sep} and Lemma \ref{lemma:test_complexity} imply that using $\hat S_{1/2}$ achieves the minimax sample complexity for the following problems: 
\begin{itemize}
    \item $\GoF$ in $\cal P_{\sf{Db}}$ and $\cal P_{\sf D}$ as long as $k= \cal O(\log(1/\delta)/\epsilon^4)$; 
    \item $\TS$ in $\cal P_{\sf{Db}}$ as long as $k= \cal O(\log(1/\delta)/\epsilon^4)$, and in $\cal P_{\sf D}$ for all $(k,\epsilon,\delta)$; 
    \item $\LF$ in $\cal P_{\sf{Db}}$ as long as $k=\cal O(\log(1/\delta)/\epsilon^4)$, and in $\cal P_{\sf D}$ as long as $n\ge m$. 
\end{itemize}

However, in the remaining regimes the above test could be strictly sub-optimal. This failure comes down to two issues. First, Proposition \ref{prop:S_1/2 E sep} requires $n\gtrsim n_{\TS}(\epsilon,\delta,\cal P_{\sf D}(k))$ in order to find a good separating set, which can be sub-optimal when the optimal sample complexity for the original testing problem is only $n\gtrsim n_{\GoF}(\epsilon,\delta,\cal P_{\sf D}(k))$. Second, the quantity $\tau(\hat S_{1/2})$ is $\Omega(1)$ in the general case because the tie-breaking rule adds too many symbols to the set. These issues will be addressed separately in the next two sections.

\subsubsection{The ``better of two'' separating sets}
This section aims to find a separating set $\hat S$ with essentially the same separation as $\hat S_{1/2}$ in Proposition \ref{prop:S_1/2 E sep}, but with a smaller $\tau(\hat S)$. The central idea is to use a different tie-breaking rule from $\hat S_{1/2}$. Given a subset $D \subseteq [k]$, we define the imbalanced separating sets
\begin{align*}
    \hat S_>(D) &= \{i\in D:X_i>Y_i\}, \\
    \hat S_<(D) &= \{i \in D:X_i < Y_i\}. 
\end{align*}
In other words, in both $\hat S_>$ and $\hat S_<$, we do not include the symbols with $X_i = Y_i$ in the separating set. Consequently, $|\hat S_>(D)| \lor |\hat S_<(D)|$ is upper bounded by the sample size; if in addition $q_i$ is bounded from above uniformly over $i\in D$, this will yield good control of $\tau$ for both separating sets $\hat S_>(D)$ and $\hat S_<(D)$. In particular, $\tau(\hat S_>(D)) \lor \tau(\hat S_<(D)) =\cal O( 1\land (n\max_{i\in D}q_i))$. 

Next we aim to show that the above sets achieve good separation. However, there is a subtlety here: removing the ties from $\hat S_{1/2}$ may no longer guarantee the desired separation, as illustrated in the following proposition. 

\begin{proposition}\label{prop:neg sep}
    Consider the distributions $p,q$ on $[3k]$ with $p_i = \one\{i\leq k\}/(2k) + \one\{i > k\}/(4k)$ and $q_i = \one\{i\leq k\}/k$. Then, for $n \leq 0.6k$, 
    \begin{equation*}
        \E \operatorname{sep}(\hat S_>([3k])) < 0. 
    \end{equation*} 
\end{proposition}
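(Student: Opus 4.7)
The plan is to compute $\E\sep(\hat S_>([3k]))$ exactly by linearity and reduce the claim to an elementary one-variable inequality involving Poisson probabilities. Writing
\[
\E\sep(\hat S_>([3k])) = \sum_{i=1}^{3k}(p_i-q_i)\,\P(X_i > Y_i),
\]
I would split the sum into the two blocks on which $(p,q)$ are piecewise constant. For $i>k$ we have $q_i=0$, hence $nY_i\equiv 0$ and $\P(X_i>Y_i)=\P(nX_i\ge 1)=1-e^{-np_i}=1-e^{-n/(4k)}$. Since $p_i-q_i=1/(4k)$ on this block, its contribution is $\tfrac12(1-e^{-n/(4k)})$. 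For $i\le k$ all bins are exchangeable with $p_i-q_i=-1/(2k)$, so their combined contribution equals $-\tfrac12 \P(X_1>Y_1)$ with $nX_1\sim\poi(n/(2k))$, $nY_1\sim\poi(n/k)$ independent. Summing,
\[
\E\sep(\hat S_>([3k])) = \frac{1}{2}\Bigl[(1-e^{-n/(4k)}) - \P(X_1>Y_1)\Bigr],
\]
so the claim reduces to establishing $\P(X_1>Y_1)>1-e^{-n/(4k)}$ whenever $n\le 0.6k$.

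Next I would lower bound $\P(X_1>Y_1)$ by isolating the dominant event $\{nY_1=0,\,nX_1\ge 1\}$, giving
\[
\P(X_1>Y_1) \ge \P(nY_1=0)\,\P(nX_1\ge 1) = e^{-\lambda}\bigl(1-e^{-\lambda/2}\bigr),
\]
where $\lambda \eqdef n/k \in (0,0.6]$. The proof then reduces to the univariate inequality
\[
f(\lambda) \eqdef e^{-\lambda/4} + e^{-\lambda} - e^{-3\lambda/2} - 1 \;>\; 0 \quad \text{on } (0,0.6].
\]
One immediately checks $f(0)=0$ and $f'(0) = -\tfrac14-1+\tfrac32 = \tfrac14 > 0$, so $f$ is positive just to the right of the origin. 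This is the intuitive content of the proposition: when $\lambda$ is small both Poissons are typically $0$, and among the rare outcomes where they differ, $\{X_1>Y_1\}$ occurs roughly a fraction $p_1/(p_1+q_1)=1/3$ of the time, which is enough to overwhelm the $+\tfrac12(1-e^{-n/(4k)})$ contribution from the second block because the weight ratio between blocks works in our favor.

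The main obstacle is that the inequality $f>0$ is quite tight near the endpoint: a direct computation gives $f(0.6)\approx 0.003$, so one cannot afford loose bounds. I would finish by a short calculus argument: compute $f'(\lambda) = -\tfrac14 e^{-\lambda/4} - e^{-\lambda} + \tfrac32 e^{-3\lambda/2}$, show it has at most one zero on $(0,\infty)$ (e.g.\ after the substitution $u = e^{-\lambda/4}$ it becomes a polynomial in $u$ with a single positive sign change), so $f$ is unimodal on $[0,0.6]$, and then verify the endpoint values $f(0)=0$, $f(0.6)>0$. Should a tighter margin be desired (to avoid the close call at $\lambda=0.6$), one can instead use the two-term lower bound $\P(X_1>Y_1)\ge e^{-\lambda}(1-e^{-\lambda/2}) + \lambda e^{-\lambda}\bigl(1-e^{-\lambda/2}(1+\lambda/2)\bigr)$, which enlarges the margin to roughly $0.015$ at $\lambda=0.6$ at the cost of a slightly longer monotonicity argument.
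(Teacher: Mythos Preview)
Your proposal is correct and follows essentially the same route as the paper: both compute $\E\sep(\hat S_>)$ explicitly, lower bound $\P(X_1>Y_1)$ by the single event $\{nY_1=0,\,nX_1\ge 1\}=e^{-\lambda}(1-e^{-\lambda/2})$, and reduce to the identical one-variable inequality $e^{-\lambda/4}+e^{-\lambda}-e^{-3\lambda/2}-1\ge 0$ for $\lambda\in(0,0.6]$. The only difference is that the paper dispatches the last step by the numerical remark ``for $\exp(-n/(4k))\gtrapprox 0.86$'', whereas you supply a genuine calculus verification (unimodality via a single sign change in $6u^5-4u^3-1$ after $u=e^{-\lambda/4}$, plus endpoint checks); your version is more careful, but the underlying argument is the same.
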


Proposition \ref{prop:neg sep} shows that sticking to only one set $\hat S_>$ or $\hat S_<$ fails to give the same separation guarantees as Proposition \ref{prop:S_1/2 E sep}. A priori it may seem that
$\hat S_>$ is designed to capture elements of the support where $p$ is greater than $q$, but it
fails to do so spectacularly. An intuitive explanation of this phenomenon is as follows. Since the
probability of each bin is small ($\lesssim1/k$) under both $p$ and $q$, in the small $n$
regime\footnote{Technically, to satisfy the stated conditions we would require $n\lesssim
\sqrt{k}$, but the described event captures dominant
effects even for larger $\sqrt{k} \ll n\ll k$.}
can expect that $(a)$ each bin appears either once or not at all and $(b)$ there is no overlap
between the observed bins in sample $X$ and $Y$. In this heuristic picture, the set $\hat S_>$ is
simply the set of observed bins in the $X$-sample. Each $X$-sample falling in
the first $k$ bins contributes $-{1\over 2k}$ to  the separation, while each $X$-sample in the last
$2k$ bins contributes only $+{1\over 4k}$ to the separation.  Since $p$ puts mass $1/2$ on both the first $k$
and last $2k$ bins, there is an equal number of $n/2$ observations in each part and the overall separation is
$\asymp {-{n\over 8k}}$. Similar results can be proved for $\hat S_<$ with $p,q$ as above but swapped,
and also for modified $p,q$ separated by smaller $\epsilon$ in $\TV$ for any $\epsilon \in (0,1)$. 

Motivated by the above discussion, in the sequel we consider the sets $\hat S_>, \hat S_<$ jointly. Specifically, the next proposition shows that \emph{at least one of} the sets $\hat S_>$ and $\hat S_<$ have a good separation. 

\begin{proposition}\label{prop:S_> + S_geq E lower}
    There exists a universal constant $c>0$ such that for any $D \subseteq [k]$ and probability mass functions $p,q$, it holds that
    \begin{align*}
        \E\left[\sep(\hat S_>(D))-\sep(\hat S_<(D))\right] &\geq c\sum_{i \in D} \frac{n(p_i-q_i)^2}{\sqrt{n(p_i\wedge q_i)+1}} \land |p_i-q_i|, \\
        \sigma^2(\sep(\hat S_>(D))) + \sigma^2(\sep(\hat S_<(D))) &\leq \frac 1c \sum_{i \in D} \frac{p_i+q_i}{n} \land |p_i-q_i|^2. 
    \end{align*}
\end{proposition}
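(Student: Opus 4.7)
The plan is to exploit Poissonization to decouple contributions across symbols. Setting $A_i := nX_i \sim \poi(np_i)$ and $B_i := nY_i \sim \poi(nq_i)$, mutually independent across $i \in [k]$, I write
\begin{equation*}
\sep(\hat S_>(D)) - \sep(\hat S_<(D)) = \sum_{i \in D} (p_i - q_i)\, \operatorname{sgn}(A_i - B_i),
\end{equation*}
a sum of independent summands each bounded by $|p_i - q_i|$. Since $\operatorname{sgn}$ is odd, swapping the roles of $p_i$ and $q_i$ shows that $(p_i - q_i)\,\E[\operatorname{sgn}(A_i - B_i)] \geq 0$ termwise, so both estimates in the proposition reduce to per-symbol Skellam bounds.

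For the expectation bound, take WLOG $p_i \geq q_i$; the goal becomes $\P(A_i > B_i) - \P(A_i < B_i) \gtrsim \min(1,\, n(p_i - q_i)/\sqrt{nq_i + 1})$. The key device is a Poisson thinning coupling: decompose $A_i = A_i' + C_i$ with $A_i' \sim \poi(nq_i)$, $C_i \sim \poi(n(p_i - q_i))$, and $A_i', B_i, C_i$ mutually independent, so that $W_i := A_i' - B_i$ is a symmetric integer-valued Skellam variable and $A_i - B_i = W_i + C_i$. Conditioning on $C_i = c \geq 1$, a short telescoping using symmetry of $W_i$ yields $\E[\operatorname{sgn}(W_i + c)] = \P(-c+1 \leq W_i \leq c) \geq \P(|W_i| \leq c-1)$. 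A local-CLT estimate on the pmf $\P(W_i = k) = e^{-2nq_i} I_{|k|}(2nq_i)$ then gives $\P(|W_i| \leq c - 1) \gtrsim \min(c/\sqrt{nq_i + 1},\, 1)$ uniformly in $nq_i \geq 0$ and integer $c \geq 1$. Averaging over $C_i$ recovers the target: the regime $n(p_i - q_i) \lesssim 1$ is driven by the event $\{C_i = 1\}$, whereas $n(p_i - q_i) \gtrsim 1$ is driven by the typical value $C_i \asymp n(p_i - q_i)$.

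For the variance bound, I argue per symbol that
\begin{equation*}
\sigma^2\!\big((p_i-q_i)\one\{A_i > B_i\}\big) + \sigma^2\!\big((p_i-q_i)\one\{A_i < B_i\}\big) \;\lesssim\; (p_i - q_i)^2 \wedge \frac{p_i + q_i}{n}.
\end{equation*}
The factor $(p_i-q_i)^2$ is immediate from the Hoeffding sub-Gaussian proxy $\sigma^2(\mathsf{Bernoulli}) \leq 1/4$. For the $(p_i + q_i)/n$ factor I restrict to the high-signal regime $(p_i - q_i)^2 \geq (p_i + q_i)/n$, where a Bernstein-type Skellam tail gives $P := \P(\operatorname{sgn}(A_i - B_i) \neq \operatorname{sgn}(p_i - q_i)) \lesssim \exp(-c\, n(p_i - q_i)^2 / (p_i + q_i))$; applying the sharp Kearns--Saul sub-Gaussian proxy $\sigma^2(\mathsf{Bernoulli}(P)) \lesssim 1/\log(1/P)$ and multiplying by $(p_i - q_i)^2$ produces the required $(p_i + q_i)/n$ scaling.

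The main obstacle is the expectation bound in the transitional regime $nq_i \asymp 1$, where neither a ``$B_i \equiv 0$'' Poisson asymptotic nor the Gaussian CLT is sharp on its own. The Poisson thinning coupling handles this uniformly by reducing everything to a single local estimate on $\P(|W_i| \leq c)$, so the remaining technical work is to verify the requisite modified Bessel bounds (monotonicity of $I_{|k|}(2\lambda)/I_0(2\lambda)$ in $\lambda$ and Stirling-type asymptotics), which can be consolidated into one elementary lemma.
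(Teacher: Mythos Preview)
Your proposal is correct. The variance bound is argued essentially as in the paper (Poisson/Skellam tail plus the Kearns--Saul optimal Bernoulli proxy; this is the paper's Lemma on the sub-Gaussian variance of $\sep(\hat S_s(D))$). The expectation bound, however, is obtained by a genuinely different route. The paper observes that
\[
\E[\sep(\hat S_>(D))-\sep(\hat S_<(D))]=\E[\sep(\hat S_{1/2}(D))-\sep(D\setminus\hat S_{1/2}(D))],
\]
and then reduces to a per-symbol lemma proved by a derivative argument: with $f(t)=\P(\poi(t)>Y)+\tfrac12\P(\poi(t)=Y)$ for $Y\sim\poi(\lambda)$, one has $f'(t)=\tfrac12\E[\P(\poi(t)\in\{Y-1,Y\})]$, and a Poisson local estimate $\P(\poi(t)=y)=\Omega(1/\sqrt{t+1})$ near the mode gives $f(\mu)-f(\lambda)\gtrsim(\mu-\lambda)/\sqrt{\lambda+1}\wedge 1$ after integration. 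Your thinning decomposition $A_i=A_i'+C_i$ with symmetric Skellam $W_i=A_i'-B_i$ is a clean alternative: the identity $\E[\operatorname{sgn}(W_i+c)]=\P(-c+1\le W_i\le c)$ converts the problem into a single local estimate $\P(|W_i|\le c-1)\gtrsim \min(c/\sqrt{nq_i+1},1)$, and the averaging over $C_i$ handles both the small- and large-signal regimes without the interpolation in $t$. The tradeoff is that the paper's local estimate is for a single Poisson pmf (three elementary case checks), whereas yours is for the Skellam pmf and so passes through modified Bessel asymptotics; both are routine, and your decomposition arguably makes the role of the ``excess'' Poisson $C_i$ more transparent.
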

Based on Proposition \ref{prop:S_> + S_geq E lower}, our final separating set is chosen from these two options, based on evaluation on held out data. As for the choice of $D$, in this section we choose $D = [k]$. The following corollary summarizes the performance of this choice under $\cal P_\sf{Db}$.

\begin{corollary}\label{cor:sep+tau for P_Db}
    Suppose $p,q \in \cal P_\sf{Db}(k,\cal O(1))$ with $\TV(p,q)\ge \epsilon$. There exists a universal constant $c>0$ such that using the samples $X,Y$ we can find a set $\hat S \subseteq [k]$ which, with probability $1-\delta$, satisfies
    \begin{equation}
        \left|\sep(\hat S)\right| \geq c\epsilon^2\left(\frac1\epsilon\land\sqrt{\frac nk}\land\frac nk \right) \qquad\text{and}\qquad \tau(\hat S) \leq \frac1c\left(1\land \frac nk\right),
    \end{equation}
    provided $n\geq \frac1cn_\GoF(\epsilon, \delta, \cal P_\sf{Db}(k,\cal O(1)))$. 
\end{corollary}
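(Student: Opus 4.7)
The plan is to take $\hat S$ to be the ``better of two'' among the candidates $\hat S_>([k])$ and $\hat S_<([k])$ supplied by Proposition \ref{prop:S_> + S_geq E lower}. Concretely, I would split each of $X, Y$ into two halves: the first halves are used to form the two candidate sets, and the second halves to empirically estimate $|\sep|$ for each candidate and retain the larger. By a standard pigeonhole-plus-concentration argument, as long as the sub-Gaussian deviations of both candidates, and of the held-out selection step, are dominated by the expected separation gap between the two, the chosen $\hat S$ inherits up to constants the expected separation and sub-Gaussian control supplied by Proposition \ref{prop:S_> + S_geq E lower}.

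The main technical step will be to lower bound the sum $\Sigma := \sum_{i=1}^k \frac{n(p_i-q_i)^2}{\sqrt{n(p_i\wedge q_i)+1}} \wedge |p_i-q_i|$ by $c\epsilon^2(1/\epsilon \wedge \sqrt{n/k} \wedge n/k)$ under the boundedness assumption $p_i \vee q_i \le C_\sf{Db}/k$. Setting $\alpha := 1/\sqrt{C_\sf{Db} n/k + 1}$, each summand dominates $\alpha n (p_i-q_i)^2 \wedge |p_i-q_i|$, with crossover at $T := 1/(\alpha n)$. Writing $A := \{i : |p_i-q_i|\le T\}$ and applying Cauchy--Schwarz on $A$ gives $\sum_{i \in A}(p_i-q_i)^2 \ge (\sum_{i \in A}|p_i-q_i|)^2/k$; keeping $\sum_{i \in A^c}|p_i-q_i|$ as is, splitting into the three regimes $n \le k$, $k \le n \le k/\epsilon^2$, and $n \ge k/\epsilon^2$, and then using $\sum_i|p_i-q_i| \ge 2\epsilon$ in each, yields the claim. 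I expect this case split to be the main technical obstacle.

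For concentration, Proposition \ref{prop:S_> + S_geq E lower} combined with the boundedness yields
\[
\sigma^2(\sep(\hat S_>([k]))) + \sigma^2(\sep(\hat S_<([k]))) \le \frac{1}{c}\sum_i\frac{p_i+q_i}{n}\wedge|p_i-q_i|^2 = O\!\left(\tfrac{1}{n}\wedge\tfrac{1}{k}\right),
\]
where the last bound uses $\sum_i(p_i+q_i) = 2$ together with $|p_i-q_i|^2 \le (2C_\sf{Db}/k)|p_i-q_i|$ and $\sum_i|p_i-q_i| \le 2$. A direct check will verify that in each of the three regimes, the sub-Gaussian deviation at confidence $\delta$, of order $\sqrt{\log(1/\delta)(1/n \wedge 1/k)}$, is strictly dominated by the expected separation as soon as $n \ge n_\GoF \asymp \sqrt{k\log(1/\delta)}/\epsilon^2 + \log(1/\delta)/\epsilon^2$. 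A union bound over the empirical selection error and the sub-Gaussian tails of both candidates (absorbing constants into $\delta$) then gives $|\sep(\hat S)| \ge c\epsilon^2(1/\epsilon \wedge \sqrt{n/k} \wedge n/k)$ with probability at least $1-\delta$.

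For the $\tau(\hat S)$ bound, both $|\hat S_>([k])|$ and $|\hat S_<([k])|$ are bounded by the number of bins receiving at least one observation, hence by $N_X \vee N_Y$ with $N_X, N_Y \sim \poi(n)$. Standard Poisson concentration gives $N_X \vee N_Y = O(n)$ with probability at least $1-\delta$, using $n \ge n_\GoF \ge \log(1/\delta)$. Combined with $p_i \vee q_i \le C_\sf{Db}/k$, this implies $p(\hat S) \vee q(\hat S) \le |\hat S| C_\sf{Db}/k = O(n/k)$, so that $\tau(\hat S) \le O(n/k) \wedge 1/4 = O(1 \wedge n/k)$, as required.
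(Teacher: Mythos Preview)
Your proposal is correct and follows essentially the same route as the paper: split the samples, form the two candidates $\hat S_>([k])$ and $\hat S_<([k])$, use Proposition \ref{prop:S_> + S_geq E lower} together with the $\cal P_\sf{Db}$ boundedness to obtain the expected-separation lower bound and the $O(1/(n\vee k))$ sub-Gaussian variance, select the better candidate on held-out data, and bound $\tau$ via Poisson concentration of the sample size. The paper differs only cosmetically, borrowing the Cauchy--Schwarz minimization already carried out in the proof of Proposition \ref{prop:S_1/2 E sep} instead of redoing your three-regime case split, and invoking Lemma \ref{lem:bernstein master} (Bernstein) explicitly for the held-out selection step; the extra Bernstein term $\log(1/\delta)/n$ you do not mention is likewise dominated once $n\gtrsim n_\GoF$.
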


By Corollary \ref{cor:sep+tau for P_Db} and Lemma \ref{lemma:test_complexity}, using the above set $\hat S$ achieves the minimax sample complexity for all problems $\GoF$, $\TS$, and $\LF$ and all parameters $(k,\epsilon,\delta)$ under $\cal P_{\sf{Db}}$. However, under $\cal P_{\sf D}$, the performance of $\hat S$ is no better than that of $\hat S_{1/2}$. This is because a good control of $\tau(\hat S_>([k]))$ requires a bounded probability mass function; in other words, choosing $D=[k]$ is not optimal for finding the best separating set under $\cal P_{\sf D}$. In the next section, we address this issue by choosing $D$ to be one of $\cal O(\log k)$ subsets of $[k]$. 

\subsubsection{The ``best of $\cal O(\log k)$" separating sets}\label{sec:best of log(k)}
This section is devoted to the two missing regimes $m \geq n$ for $\LF$ over $\cal P_\sf{D}$ and $k \gtrsim \log(1/\delta)/\epsilon^4$ for $\GoF$ over $\cal P_\sf{D}$ (cf. discussion after Proposition \ref{prop:balanced classifier} and Corollary \ref{cor:sep+tau for P_Db}). For the former, recall that the classifier-accuracy test based on $\hat S_{1/2}$ achieves the sample complexity
\begin{align}\label{eqn:S_1/2 performance}
    n \gtrsim n_{\GoF}(\epsilon,\delta,\cal P_{\sf D}) + \frac{k\sqrt{\log(1/\delta)}}{\sqrt{n}\epsilon^2}.
\end{align}
If $n \gtrsim k$ then \eqref{eqn:S_1/2 performance} is the same as $n\gtrsim n_\GoF$; if $m/\log(1/\delta) \lesssim n$ then \eqref{eqn:S_1/2 performance} is implied by $n\gtrsim n_\GoF + \frac{k\log(1/\delta)}{\sqrt m\epsilon^2}$, which is optimal within an $O(\log^{1/2}(1/\delta))$ factor (cf. Table \ref{table:P_D}). In our application to $\GoF$ we take $m=\infty$, and the missing regime $k \gtrsim \log(1/\delta)/\epsilon^4$ corresponds precisely to $n_\GoF \lesssim k$. 
Summarizing, in the remainder of this section we may assume that $k \land (m/\log(1/\delta)) \gtrsim n$.

Let $t = k\wedge (c_0m/\log(1/\delta))$, where $c_0>0$ is a small absolute constant. By the previous paragraph, we assume without loss of generality that $t > n$. For $\ell = \lceil \log_2(t/n)\rceil \ge 1$, define the following $\ell+2$ subsets of $[k]$:  
\begin{align*}
    D_0 &= \left\{i: \hat{q}_i^0 \leq \frac1t\right\}, \quad
    D_j = \left\{i: \hat{q}_i^0 \in \Big(\frac{2^{j-1}}{t}, \frac{2^j}{t}\Big]\right\} \text{ for }j\in[\ell], \quad
    D_{\ell+1} = \left\{i: \hat{q}_i^0 > \frac{2^\ell}{t}\right\}. 
\end{align*}
Here $\hat{q}_i^0$ denotes the empirical pmf of $m/2$ held out samples drawn from $q$ (for $\GoF$, one can understand $\hat{q}_i^0 = q_i$ for the distribution $q$ is known). The motivation behind the above choices is the ``localization'' of each $\hat{q}_i^0$, as shown in the following lemma. 

\begin{lemma}\label{lemma:localization}
For a small enough universal constant $c_0>0$, with probability at least $1-k\delta$ it holds that for each $i\in [k]$: 
\begin{enumerate}
    \item if $\hat{q}_i^0\in D_0$, then $q_i < 2/t$; 
    \item if $\hat{q}_i^0\in D_j$ for some $j\in [\ell]$, then $q_i \in (2^{j-2}/t, 2^{j+1}/t]$;
    \item if $\hat{q}_i^0\in D_{\ell+1}$, then $q_i > 2^{\ell-1}/t$. 
\end{enumerate}
\end{lemma}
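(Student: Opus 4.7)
The plan is to reduce the lemma to a per-bin multiplicative concentration of $\hat q_i^0$ around $q_i$, and then take a union bound over the $k$ bins. Observe that $(m/2)\hat q_i^0$ is $\mathrm{Bin}(m/2,q_i)$ (or $\mathrm{Poi}(q_im/2)$ under Poissonization); write $\mu_i \eqdef q_im/2$. The key quantitative fact driving everything is that whenever $q_i$ lies near any relevant threshold $2^j/t$ with $j\ge 0$, the mean satisfies $\mu_i \gtrsim m/t \geq \log(1/\delta)/c_0$, so multiplicative Chernoff at constant relative deviation produces per-bin failure probability at most $\delta$ once $c_0$ is a sufficiently small absolute constant.

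\textbf{Step 1: a single per-bin good event.} I would define, for each $i \in [k]$,
\begin{equation*}
    G_i \;\eqdef\; \begin{cases} \{\,\hat q_i^0 \in (q_i/2,\, 2q_i)\,\} & \text{if } q_i \geq 1/(2t), \\[2pt] \{\,\hat q_i^0 < 1/t\,\} & \text{if } q_i < 1/(2t), \end{cases}
\end{equation*}
and verify that $\bigcap_i G_i$ implies all three conclusions. For (1), $\hat q_i^0 \le 1/t$ together with $q_i \ge 2/t$ would give $\hat q_i^0 > q_i/2 \ge 1/t$, a contradiction, so $q_i < 2/t$. For (2) and (3), $\hat q_i^0 > 1/t$ combined with $G_i$ forces $q_i \ge 1/(2t)$ and therefore $q_i \in (\hat q_i^0/2,\,2\hat q_i^0)$, which sends $(2^{j-1}/t,\, 2^j/t]$ into $(2^{j-2}/t,\, 2^{j+1}/t]$ and $(2^\ell/t,\,\infty)$ into $(2^{\ell-1}/t,\,\infty)$.

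\textbf{Step 2: Chernoff and union bound.} If $q_i \ge 1/(2t)$, then $\mu_i \ge m/(4t) \ge \log(1/\delta)/(4c_0)$, and the two-sided multiplicative Chernoff bound gives $\P(G_i^c) \le 2\exp(-\mu_i/8) \le \delta$ for $c_0$ small enough. If $q_i < 1/(2t)$, stochastic monotonicity in $q_i$ reduces to the worst case $q_i = 1/(2t)$, where the threshold $m/(2t)$ equals $2\mu_i$, and the one-sided Chernoff bound $\P(X \ge 2\mu) \le e^{-\mu/3}$ again yields $\P(G_i^c) \le \delta$ for $c_0$ small enough. A union bound over $i \in [k]$ then gives $\P\bigl(\bigcap_i G_i\bigr) \ge 1 - k\delta$.

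\textbf{Expected obstacle.} There is no conceptual obstacle; the only care needed is to verify that a single universal $c_0$ simultaneously dominates every Chernoff exponent across both regimes (small vs.\ large $q_i$) and both tails---a routine calculation that just pins down the value of $c_0$. One minor wrinkle is that the definition of $G_i$ branches on the unknown quantity $q_i$, but since $q_i$ is deterministic this is only notational: in either branch the Chernoff estimate above applies directly.
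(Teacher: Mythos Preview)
Your proposal is correct and follows essentially the same approach as the paper, which simply states that the lemma ``directly follows from the Poisson concentration in Lemma~\ref{lem:poisson tail} and is thus omitted.'' Your per-bin multiplicative concentration event $G_i$ together with the union bound is exactly the intended argument, and your Step~1 verification that $\bigcap_i G_i$ implies all three conclusions is clean; the paper's Lemma~\ref{lem:poisson tail} (Poisson Chernoff) plays the same role as the multiplicative Chernoff bounds you invoke.
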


Lemma \ref{lemma:localization} ensures that with high probability, the distribution $q$ restricted to each set $D_j$ is near-uniform. This is similar in spirit to the idea of flattening used in distribution testing \cite{diakonikolas2016new}. The proof of Lemma \ref{lemma:localization} directly follows from the Poisson concentration in Lemma \ref{lem:poisson tail} and is thus omitted. 

Our main result of this section is the next proposition, which shows that there exist some $j\in \{0,1,\cdots,\ell+1\}$ and $\hat S\subseteq D_j$ such that $\hat S$ is a near-optimal separating set within logarithmic factors.

\begin{proposition}\label{prop:sep + tau guarantees P_D}
Suppose $p, q\in \cal P_{\sf D}(k)$ with $\TV(p,q)\ge \epsilon$, and $X, Y$ are $n$ iid samples drawn from $p, q$ respectively. There exists a universal constant $c>0$ such that using the samples $X, Y$, we can find some $j\in \{0,1,\cdots,\ell+1\}$ and a set $\hat S\subseteq D_j$ which, with probability $1-\cal O(k\delta)$, satisfies 
\begin{align*}
    \left|\sep(\hat S)\right| \ge c\left(\frac{\epsilon}{\ell}\right)^2\begin{cases}\begin{rcases}
       n/k &\text{if } j = 0 \\
       n/\sqrt{kt/2^j} & \text{if } j \in [\ell+1]
    \end{rcases}\end{cases} \quad \text{and} \quad \tau(\hat S)\le \frac{n2^j}{ct}
\end{align*}
provided that $$n\sqrt{1\land\frac{m}{\log(1/\delta)k}} \ge \frac{1}{c} n_\GoF(\epsilon/\ell,\delta,\cal P_{\sf D}).$$
\end{proposition}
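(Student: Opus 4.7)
The strategy is to combine the bucket localization of Lemma~\ref{lemma:localization} with a pigeonhole argument over the $\ell+2$ buckets $D_j$, apply Proposition~\ref{prop:S_> + S_geq E lower} on the pigeon-chosen bucket, and finally select the best bucket and direction ($\hat S_>$ vs.\ $\hat S_<$) via independent held-out data. Throughout I condition on the $1-k\delta$ event of Lemma~\ref{lemma:localization}, on which for $j\in[\ell]$ one has $q_i\in(2^{j-2}/t,2^{j+1}/t]$ on $D_j$ and hence $|D_j|\le 4t/2^j$; the extremal buckets $D_0,D_{\ell+1}$ admit the one-sided localizations stated in the lemma, and for $D_0$ I will use only the trivial $|D_0|\le k$. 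By the reduction in the paragraph preceding the proposition one may assume $n\le t=k\wedge(c_0 m/\log(1/\delta))$.

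Since $\sum_i|p_i-q_i|\ge 2\epsilon$, pigeonholing over the $\ell+2$ buckets produces an index $j^\star$ (depending only on $p,q$) with $\sum_{i\in D_{j^\star}}|p_i-q_i|\gtrsim \epsilon/\ell$. Applying Proposition~\ref{prop:S_> + S_geq E lower} with $D=D_{j^\star}$ and using $n(p_i\wedge q_i)\lesssim n\cdot 2^{j^\star}/t$ to bound $\sqrt{n(p_i\wedge q_i)+1}\lesssim\sqrt{1+n2^{j^\star}/t}$, Cauchy--Schwarz yields
\[
\sum_{i\in D_{j^\star}}(p_i-q_i)^2 \;\gtrsim\; \frac{(\epsilon/\ell)^2}{|D_{j^\star}|} \;\gtrsim\; (\epsilon/\ell)^2\cdot\begin{cases}1/k,&j^\star=0,\\ 2^{j^\star}/t,&j^\star\ge 1.\end{cases}
\]
Plugging these into the expectation bound of Proposition~\ref{prop:S_> + S_geq E lower} and simplifying with $n\le t\le k$ recovers, up to absolute constants, the two claimed expressions $(\epsilon/\ell)^2\,n/k$ and $(\epsilon/\ell)^2\,n/\sqrt{kt/2^{j^\star}}$.

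The variance bound of Proposition~\ref{prop:S_> + S_geq E lower} gives $\sigma^2\lesssim 2/n$, so a sub-Gaussian tail, combined with the hypothesized $n\sqrt{1\wedge m/(k\log(1/\delta))}\gtrsim n_\GoF(\epsilon/\ell,\delta,\cal P_{\sf D})$---which via $n_\GoF\asymp(\sqrt{k\log(1/\delta)}+\log(1/\delta))/\epsilon^2$ is calibrated so that the expectation dominates the fluctuation $\sqrt{\log(1/\delta)/n}$---guarantees that $|\sep(\hat S_>(D_{j^\star}))|\vee|\sep(\hat S_<(D_{j^\star}))|$ attains half its expected value with probability $1-\delta$. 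For $\tau$, on the same event $|\hat S_>|\vee|\hat S_<|\le N_X\vee N_Y\lesssim n$, so $q(\hat S)\le(\max_{D_{j^\star}}q_i)\cdot|\hat S|\lesssim n2^{j^\star}/t$, which upper bounds $\tau(\hat S)$. Since $j^\star$ is unknown, the procedure evaluates $|\sep|$ and $\tau$ on independent held-out data for each of the $\ell+2$ bucket/direction pairs and returns the empirical optimum; a union bound over $\cal O(\log k)$ candidates folds cleanly into the allowed $1-\cal O(k\delta)$ failure rate.

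The main obstacle will be ensuring that the Cauchy--Schwarz step produces the claimed form $n/\sqrt{kt/2^j}$ uniformly across all regimes of $j^\star$, in particular the boundary cases $j^\star=0$ (where the sharper bound $n/k$ is needed and requires the $\wedge|p_i-q_i|$ clause in Proposition~\ref{prop:S_> + S_geq E lower} to be harmlessly dominated) and $j^\star=\ell+1$ (where $q_i$ admits no upper localization from Lemma~\ref{lemma:localization}, forcing one to invoke the $|p_i-q_i|$ branch of the $\wedge$), while simultaneously verifying that the variance-based concentration absorbs the $1/\ell^2$ pigeonhole loss without incurring log factors beyond those already baked into the hypothesized sample-size condition.
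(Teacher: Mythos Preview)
Your overall architecture---pigeonhole over buckets, apply Proposition~\ref{prop:S_> + S_geq E lower} on $D_{j^\star}$, then select via held-out data---matches the paper. The genuine gap is in the concentration step. You invoke only the trivial variance proxy $\sigma^2\lesssim 1/n$ from Proposition~\ref{prop:S_> + S_geq E lower}, and claim the hypothesis is ``calibrated so that the expectation dominates the fluctuation $\sqrt{\log(1/\delta)/n}$.'' This fails precisely in the regime the proposition is designed for. Take $j^\star=0$: the expectation is $\asymp (\epsilon/\ell)^2 n/k$, so dominating $\sqrt{\log(1/\delta)/n}$ requires $n^{3/2}\gtrsim k\sqrt{\log(1/\delta)}(\ell/\epsilon)^2$. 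The proposition's hypothesis only gives $n\gtrsim \sqrt{k\log(1/\delta)}(\ell/\epsilon)^2$ (after dropping the $\sqrt{t/k}\le 1$ factor), and these are incompatible once $k\gtrsim \log(1/\delta)(\ell/\epsilon)^4$. The same obstruction arises for small $j^\star\in[\ell]$.

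The paper closes this gap by proving a \emph{refined} variance bound: for $j\le\ell$, localization gives $q_i\lesssim 2^j/t\le 1/n$ on $D_j$, and a short deterministic argument shows that when $q\lesssim 1/n$ one has $(p-q)^2\wedge (p+q)/n\lesssim n^{-1}\big(n(p-q)^2\wedge|p-q|\big)$. Summing over $D_j$ yields $\sigma^2(\sep(\hat S_s(D_j)))\lesssim E_j/n$ rather than $1/n$, so the fluctuation is $\sqrt{E_j\log(1/\delta)/n}$ and the condition $E_j\gtrsim\log(1/\delta)/n$ suffices---this \emph{is} implied by $n\gtrsim n_{\GoF}(\epsilon/\ell,\delta)$. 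For $j^\star=\ell+1$ the trivial $1/n$ variance does suffice (and the paper uses it there), which is why your obstacle paragraph correctly flags $\ell+1$ as special but misidentifies where the real difficulty lies. A secondary point: the held-out selection cannot simply return the ``empirical optimum'' of $|\widehat{\sep}|$, because both the target separation and the Bernstein fluctuation scale with $j$; the paper instead tests each candidate against its own $j$-dependent threshold $c_1\tilde E_j(\epsilon/(\ell+2))$.
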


By Proposition \ref{prop:sep + tau guarantees P_D} and Lemma \ref{lemma:test_complexity}, using the above set $\hat S$ leads to the following sample complexity guarantee for the problems $\GoF$ and $\LF$: 
\begin{itemize}
    \item for $\GoF$ under $\cal P_{\sf D}$, it succeeds with $n=\Theta(n_\GoF(\epsilon/\ell, \delta /k, \cal P_\sf{D}))$ observations, which is within a multiplicative $\cal O(\log^{\Theta(1)}(k))$ factor of the minimax optimal sample complexity in the missing $k\geq\log(1/\delta)/\epsilon^4$ regime; 
    \item for $\LF$ under $\cal P_{\sf D}$ and $m\ge n$, it succeeds with $n=\Theta(n_\GoF(\epsilon/\ell,\delta/k,\cal P_\sf{D})\sqrt{k\log(k/\delta)/m})$ observations, which is within a multiplicative $\cal O(\log^{\Theta(1)}(k)\log(k/\delta))$ factor of the minimax optimal sample complexity in the missing $n \leq m\land k$. 
\end{itemize}
Therefore, classifier-accuracy tests always lead to near-optimal sample complexities for all $\GoF, \TS$, and $\LF$ problems under both $\cal P_{\sf{Db}}$ and $\cal P_{\sf D}$, within polylogarithmic factors in $(k, 1/\delta)$. We leave the removal of extra logarithmic factors for classifier-accuracy tests as an open problem.

\subsection{The smooth density case}\label{sec:smooth sep sets}
We briefly explain how Corollary \ref{cor:sep+tau for P_Db} can be used to learn separating sets between distributions in the class $\cal P_\sf{H}$ of $\beta$-H\"older smooth distributions on $[0,1]^d$. The reduction relies on an approximation result due to Ingster \cite{ingster1987minimax,ingster2003nonparametric}, see also \cite[Lemma 7.2]{arias2018remember}. Let $P_r$ be the $L^2$-projection onto piecewise constant functions on the regular grid on $[0,1]^d$ with $r^d$ cells. 

\begin{lemma}\label{lem:Ingster approx}
    There exist constants $c_1,c_2$ independent of $r$ such that for any $f \in \cal P_\sf{H}(\beta, d, C_\sf{H})$, 
    \begin{equation*}
        \|P_rf\|_2\geq c_1\|f\|_2-c_2r^{-\beta}. 
    \end{equation*}
\end{lemma}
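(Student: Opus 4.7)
The plan is to reduce the claim to an $L^2$-approximation estimate via the triangle inequality, and then establish that approximation estimate cellwise using the H\"older smoothness of $f$.

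First, the triangle inequality gives $\|P_r f\|_2 \geq \|f\|_2 - \|f - P_r f\|_2$, so it suffices to prove
$$\|f - P_r f\|_2 \leq c_2 r^{-\beta}$$
for some $c_2 = c_2(\beta, d, C_\sf{H})$, after which one may take $c_1 = 1$. This step exploits nothing about $P_r$ beyond its being a projection.

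Second, decompose $[0,1]^d$ into the $r^d$ cubes $Q$ of side length $1/r$. Since $P_r$ is the $L^2$-orthogonal projection onto piecewise constants, $P_r f|_Q$ equals the cell average $\bar f_Q := r^d \int_Q f$, so the error splits as $\|f - P_r f\|_2^2 = \sum_Q \int_Q |f(x) - \bar f_Q|^2\,\D x$. For the case $\beta \leq 1$, Jensen's inequality and the H\"older bound $|f(x) - f(y)| \leq C_\sf{H}\|x-y\|_2^\beta \leq C_\sf{H} d^{\beta/2} r^{-\beta}$ for $x, y \in Q$ give $\|f - \bar f_Q\|_{L^\infty(Q)} \leq C_\sf{H} d^{\beta/2} r^{-\beta}$; summing the $L^2$ contributions over the $r^d$ cells of volume $r^{-d}$ yields the required bound with $c_2 = C_\sf{H} d^{\beta/2}$. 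For $\beta > 1$, I would compare $f|_Q$ not to a constant but to its Taylor polynomial $T_Q$ of degree $\lceil \beta - 1\rceil$ around the cell center $c_Q$, use the H\"older remainder $|f(x) - T_Q(x)| \leq C_\sf{H}\|x-c_Q\|_2^\beta$, and exploit the vanishing of odd centered moments $\int_Q (x-c_Q)^\alpha \,\D x = 0$ on the symmetric cube to cancel the non-constant Taylor terms against the cell average.

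The main obstacle is precisely the $\beta > 1$ regime: pure piecewise-constant projection is only first-order accurate in $L^2$, since the linear Taylor term of $f$ around $c_Q$ contributes $\Theta(r^{-1})$ to the cellwise error. Achieving the Jackson-type rate $r^{-\beta}$ therefore requires either carefully organizing the Taylor-cancellation argument on each cell and summing with a Riemann-sum estimate, or interpreting $P_r$ implicitly to include higher-order local polynomial projections as in the Ingster references. Once the approximation estimate $\|f - P_rf\|_2 \leq c_2 r^{-\beta}$ is in place --- the heart of the matter and the only non-routine ingredient --- the stated inequality follows by one line of triangle inequality, independent of $r$.
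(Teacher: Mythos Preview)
The paper does not prove this lemma; it is quoted from Ingster's work with a pointer to \cite[Lemma 7.2]{arias2018remember}, so there is no in-paper proof to compare against. That said, your strategy has a genuine gap for $\beta>1$ that is worth naming.

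Your reduction via the triangle inequality commits you to proving $\|f-P_rf\|_2\le c_2 r^{-\beta}$, and this is simply false for $\beta>1$ when $P_r$ is the projection onto \emph{piecewise constants}: already $f(x)=x$ on $[0,1]$ gives $\|f-P_rf\|_2=\frac{1}{2\sqrt{3}\,r}$, not $O(r^{-\beta})$. You correctly flag this (``piecewise-constant projection is only first-order accurate''), but your two proposed escapes do not close the gap. The odd-moment cancellation you describe controls $|\bar f_Q-f(c_Q)|$, i.e.\ the difference between the cell average and a point value; it does nothing for the cellwise $L^2$ error $\|f-\bar f_Q\|_{L^2(Q)}$, which is dominated by the \emph{linear} Taylor term and stays $\Theta(r^{-1-d/2})$ per cell. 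And reinterpreting $P_r$ as a higher-order local polynomial projection would prove a different lemma than the one stated.

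The route taken in the cited references is \emph{not} through an approximation bound with $c_1=1$; it crucially allows $c_1<1$. One way to see why this helps: the claim is equivalent to showing that whenever $\|P_rf\|_2$ is small, $\|f\|_2$ must itself be $O(r^{-\beta})$. In the extreme case $P_rf=0$ (so $\bar f_Q=0$ on every cell), the midpoint rule gives $|f(c_Q)|=O(r^{-\min(\beta,2)})$; finite-differencing these grid values and using H\"older continuity of the derivatives then forces $\|f'\|_\infty=O(r^{-\min(\beta,2)+1})$, $\|f''\|_\infty=O(r^{-\min(\beta,2)+2})$, and so on, which bootstraps back to $\|f\|_\infty=O(r^{-\beta})$. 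The general case interpolates between this and the trivial bound. So the ``heart of the matter'' is not the Jackson-type approximation estimate you aim for (which is unattainable here), but a direct comparison of $\|P_rf\|_2$ and $\|f\|_2$ that exploits smoothness across cells rather than within a single cell.
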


For simplicity write $f,g$ for the Lebesgue densities of $\bb P_\sf{X},\bb P_\sf{Y} \in \cal P_\sf{H}$. Suppose $\TV(\bb P_\sf{X}, \bb P_\sf{Y}) = \frac12\|f-g\|_1 \geq \epsilon$. By Jensen's inequality and Lemma \ref{lem:Ingster approx}, $\epsilon \lesssim \|P_r(f-g)\|_2$ for $r \asymp \epsilon^{-1/\beta}$. The key observation is that $P_rf$ is essentially the probability mass function of the distribution $\bb P_\sf{X}$ when binned on the regular grid with $r^d$ cells. We can now directly apply the results for $\cal P_\sf{Db}$ (Corollary \ref{cor:sep+tau for P_Db}) with alphabet size $k \asymp \epsilon^{-d/\beta}$, which combined with Lemma \ref{lemma:test_complexity} leads to the sample complexity guarantees in Table \ref{table:prior minimax} for the smooth density class $\cal P_\sf{H}$ in all three problems $\GoF,\TS$ and $\LF$.

\subsection{The Gaussian case}\label{sec:gauss sep sets}

Suppose we have two samples $X,Y$ of size $n$ from $\otimes_{j=1}^\infty \cal N(\theta^X_j, 1) =: \mu_{\theta^X}$ and $\mu_{\theta^Y}$ respectively, where $\theta^X,\theta^Y$ have Sobolev norm $\|\theta\|_s^2 \eqdef \sum_j \theta_j^2 j^{2s}$ bounded by a constant. In addition, $\TV(\mu_{\theta^X}, \mu_{\theta^Y}) \geq \epsilon > 0$. We use $\hat\theta^X$ and $\hat\theta^Y$ to denote the empirical mean vector from samples $X$ and $Y$, respectively.

The separating set is constructed as follows: 
\begin{equation*}
    \hat S = \{Z \in \R^\N : T(Z) \geq 0\}, 
\end{equation*}
where $T(Z) =  2\sum_{j=1}^J (\hat\theta^X_j-\hat\theta^Y_j)(Z_j - (\hat\theta^X_j+\hat\theta^Y_j)/2)$ for some $J \in \mathbb{N}$ to be specified. This is simply a truncated version of the likelihood-ratio test between $\mu_{\hat\theta^X}$ and $\mu_{\hat\theta^Y}$, where we set all but the first $J$ coordinates of  $\hat\theta^X$ and $\hat\theta^Y$ to zero. The performance of the separating set is summarized in the next proposition.

\begin{proposition}\label{prop:finding gauss sepset}
    There exists universal constants $c,c'$ such that when $J = \lfloor c\epsilon^{-1/s}\rfloor$ the inequality
    \begin{equation*}
        \P\left(\mu_{\theta^X}(\hat S)-\mu_{\theta^Y}(\hat S) \geq c'\left(\sqrt{n\epsilon^{1/s}}\land\frac1\epsilon\right)\epsilon^2\right) \geq 1-\delta
    \end{equation*}
    holds, provided $n\gtrsim \frac{1}{c'}n_\sf{TS}(\epsilon, \delta, \cal P_\sf{G})$.
\end{proposition}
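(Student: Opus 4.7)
The idea is to exploit that $T(Z)$ is linear in $Z$, so conditional on the training data (which fixes $\Delta_j := \hat\theta^X_j - \hat\theta^Y_j$ and $\bar\theta_j := (\hat\theta^X_j+\hat\theta^Y_j)/2$) it is Gaussian under both hypotheses. Explicitly, $T \mid \cal D_\sf{tr} \sim \cal N(\mu_X,\sigma^2)$ under $\mu_{\theta^X}$ and $\cal N(\mu_Y,\sigma^2)$ under $\mu_{\theta^Y}$, where $\mu_X = 2\sum_{j=1}^J\Delta_j(\theta^X_j-\bar\theta_j)$, $\mu_Y = 2\sum_{j=1}^J\Delta_j(\theta^Y_j-\bar\theta_j)$, and $\sigma^2 = 4\sum_{j=1}^J\Delta_j^2$. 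Hence $\mu_{\theta^X}(\hat S) - \mu_{\theta^Y}(\hat S) = \Phi(\mu_X/\sigma) - \Phi(\mu_Y/\sigma)$, and the task reduces to controlling both means and the variance with probability $1-\delta$ over the training data.

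For the signal/noise decomposition, let $U_j = \hat\theta^X_j - \theta^X_j$, $V_j = \hat\theta^Y_j - \theta^Y_j$ (iid $\cal N(0,1/n)$), and $\delta_j = \theta^X_j - \theta^Y_j$. A direct computation gives
\[ \mu_X - \mu_Y = 2\|\delta_{1:J}\|^2 + 2\sum_{j=1}^J(U_j-V_j)\delta_j, \qquad \mu_X + \mu_Y = -2\sum_{j=1}^J\Delta_j(U_j+V_j). \]
Crucially, $U_j\pm V_j$ are independent Gaussians (each iid $\cal N(0,2/n)$), so conditional on $\Delta$ the sum $\mu_X+\mu_Y$ is a centred Gaussian of variance $2\sigma^2/n$. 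For the bias term, Pinsker's inequality combined with $\TV(\mu_{\theta^X},\mu_{\theta^Y})\ge\epsilon$ yields $\|\delta\|^2 \gtrsim \epsilon^2$, while the Sobolev constraint gives $\sum_{j>J}\delta_j^2 \leq J^{-2s}C_\sf{G}$. Taking $c$ sufficiently large in $J = \lfloor c\epsilon^{-1/s}\rfloor$ absorbs the tail and yields $\|\delta_{1:J}\|^2 \gtrsim \epsilon^2$.

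Three concentration statements, all invoking $n\gtrsim n_\sf{TS}(\epsilon,\delta,\cal P_\sf{G})\gtrsim \log(1/\delta)/\epsilon^2$, then finish the argument: (a) $\sum_j(U_j-V_j)\delta_j$ is Gaussian with variance $(2/n)\|\delta_{1:J}\|^2$, hence at most $\tfrac{1}{10}\|\delta_{1:J}\|^2$ w.p.\ $\geq 1-\delta$; (b) $\tfrac{n}{2}\sum_j(U_j-V_j)^2 \sim \chi_J^2$, so by Laurent--Massart $\sigma^2 \lesssim \|\delta_{1:J}\|^2 + (J+\log(1/\delta))/n \lesssim \|\delta_{1:J}\|^2 + J/n$, using $\log(1/\delta)/n\lesssim\epsilon^2$; and (c) $|\mu_X+\mu_Y|/\sigma \lesssim \sqrt{\log(1/\delta)/n} \leq \tfrac{1}{10}$. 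Combining (a) and (b) gives
\[ \frac{\mu_X-\mu_Y}{\sigma} \gtrsim \frac{\|\delta_{1:J}\|^2}{\sqrt{\|\delta_{1:J}\|^2+J/n}} \gtrsim \epsilon \wedge \epsilon^2\sqrt{n/J} \asymp \epsilon^2\Bigl(\tfrac{1}{\epsilon}\wedge\sqrt{n\epsilon^{1/s}}\Bigr). \]
To convert this signal-to-noise ratio into a lower bound on $\Phi(\mu_X/\sigma)-\Phi(\mu_Y/\sigma)$, observe that by (c) the midpoint $(\mu_X+\mu_Y)/(2\sigma)\in[-\tfrac{1}{10},\tfrac{1}{10}]$: if the half-gap $(\mu_X-\mu_Y)/(2\sigma)\leq 1$ then both $\mu_X/\sigma$ and $\mu_Y/\sigma$ lie in $[-2,2]$, so the mean value theorem gives $\Phi(\mu_X/\sigma)-\Phi(\mu_Y/\sigma)\geq \phi(2)(\mu_X-\mu_Y)/\sigma$; otherwise the separation already exceeds an absolute positive constant, which dominates the target (itself at most $\epsilon\leq 1$) after adjusting $c'$. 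The main technical obstacle is arranging for the three concentration bounds to aggregate cleanly in both regimes, i.e.\ whether $\|\delta_{1:J}\|^2$ dominates $J/n$ or vice versa; in particular step (b) requires carefully verifying that the $\chi^2$-fluctuation of order $\sqrt{J\log(1/\delta)}/n$ is absorbed into the main term $\|\delta_{1:J}\|^2+J/n$ using the assumed lower bound on $n$.
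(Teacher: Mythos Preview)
Your argument is correct and the three concentration steps go through exactly as you describe. In particular, the independence of $U_j-V_j$ and $U_j+V_j$ makes $\Delta$ and $\mu_X+\mu_Y$ independent, so step (c) is clean; and the $\chi^2$ fluctuation in (b) is indeed absorbed by AM--GM into $J/n + \log(1/\delta)/n \lesssim J/n + \|\delta_{1:J}\|^2$.

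The paper takes a different route. Rather than controlling $\mu_X/\sigma$ and $\mu_Y/\sigma$ directly with high-probability bounds, it proves a stronger intermediate statement (Lemma~\ref{lem:gaussian separation}): a lower bound on $\E\sep(\hat S)$ together with sub-Gaussian concentration of $\sep(\hat S)$ around its mean with variance proxy $\cal O(1/n)$. To get the expectation, it conditions on $U=\hat\theta^X-\hat\theta^Y$, uses the identity $\E\Phi(aZ+b)=\Phi(b/\sqrt{1+a^2})$ to integrate out $V=\hat\theta^X+\hat\theta^Y$, and is left with $\tilde\Phi(W)-\tilde\Phi(-W)$ where $W=\langle\theta^X-\theta^Y,U/\|U\|\rangle$; it then lower-bounds $\E W$ via Stein's identity and a Cauchy--Schwarz argument. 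Concentration is obtained by Gaussian Lipschitz concentration applied twice (first in $V$, then to $W$ as a function of $U$). Your approach is considerably more elementary---only Gaussian tails and Laurent--Massart, no Stein's identity or Lipschitz concentration---and goes straight to the high-probability statement, at the cost of not yielding the expectation/sub-Gaussian decomposition that the paper records separately. Both proofs ultimately hinge on the same independence of $\hat\theta^X\pm\hat\theta^Y$ and the same bias estimate $\|\delta_{1:J}\|\gtrsim\epsilon$ from the Sobolev tail bound.
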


Applying Proposition \ref{prop:finding gauss sepset} and Lemma \ref{lemma:test_complexity} with the trivial bound $\tau(\hat S) \leq 1/4$ leads to the sample complexity guarantees in Table \ref{table:prior minimax} for the Gaussian sequence model class $\cal P_\sf{G}$ in all three problems $\GoF,\TS$ and $\LF$.

\section{Acknowledgements}
YH was generously supported by the Norbert Wiener postdoctoral fellowship in statistics at MIT IDSS. YP was supported in part by the National Science
Foundation under Grant No CCF-2131115. Research was sponsored by the United States Air Force Research Laboratory and the Department of the Air Force Artificial Intelligence Accelerator and was accomplished under Cooperative Agreement Number FA8750-19-2-1000. The views and conclusions contained in this document are those of the authors and should not be interpreted as representing the official policies, either expressed or implied, of the Department of the Air Force or the U.S. Government. The U.S. Government is authorized to reproduce and distribute reprints for Government purposes notwithstanding any copyright notation herein.

\bibliographystyle{alpha}
\bibliography{main}

\appendix

\section{Auxiliary Lemmas}\label{append:auxiliary}
We state some auxiliary lemmas which will be used for the proof. We begin with a simple identity for standard normal distributions. 
\begin{lemma}\label{lem:gaussian cdf expectation}
    Take $a,b \in \R$ and let $Z$ be standard normal. Then
    \begin{equation*}
        \E \Phi(aZ+b) = \Phi\left(\frac{b}{\sqrt{1+a^2}}\right). 
    \end{equation*}
\end{lemma}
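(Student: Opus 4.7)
The plan is to use the standard auxiliary-Gaussian trick that converts an expectation of the Gaussian CDF into a single probability statement about a Gaussian linear combination. Concretely, introduce $W \sim \mathcal{N}(0,1)$ independent of $Z$. By the definition of $\Phi$ as the standard normal CDF and the conditional probability interpretation, for every realization of $Z$ we have $\Phi(aZ + b) = \P(W \le aZ + b \mid Z)$ almost surely.

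Taking expectation over $Z$ and using the tower property of conditional expectation gives
\[
\E\,\Phi(aZ+b) \;=\; \E\bigl[\P(W \le aZ + b \mid Z)\bigr] \;=\; \P(W - aZ \le b).
\]
Since $W$ and $Z$ are independent standard normals, the linear combination $W - aZ$ is Gaussian with mean zero and variance $1 + a^2$. Hence $\P(W - aZ \le b) = \Phi\bigl(b/\sqrt{1+a^2}\bigr)$, which is the desired identity.

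There is no real obstacle: the only thing to verify carefully is the independence that justifies $\mathrm{Var}(W - aZ) = 1 + a^2$, which is immediate from the construction. An alternative proof would differentiate both sides in $b$ and invoke the Gaussian convolution identity $\phi \ast \phi_{|a|} = \phi_{\sqrt{1+a^2}}$ (where $\phi_\sigma$ is the density of $\mathcal{N}(0,\sigma^2)$), but this is essentially the same computation in disguise and offers no advantage.
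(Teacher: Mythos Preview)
Your proof is correct and essentially identical to the paper's own argument: both introduce an independent standard normal, rewrite $\Phi(aZ+b)$ as a conditional probability, and conclude using the fact that the resulting linear combination is $\cal N(0,1+a^2)$.
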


\begin{proof}
    Let $Z'$ be a standard Gaussian independent of $Z$. Then
    \begin{align*}
        \E \Phi(aZ+b) = \P(aZ+b\geq Z') 
        = \P\left(\frac{Z'-aZ}{\sqrt{1+a^2}} \leq \frac{b}{\sqrt{1+a^2}}\right) 
        = \Phi\left(\frac{b}{\sqrt{1+a^2}}\right). 
    \end{align*}
\end{proof}

The following lemma is the celebrated result of Gaussian Lipschitz concentration. 
\begin{lemma}[Lipschitz concentration for Gaussians {{\cite[Theorem 5.2.1]{vershynin2018high}}}]\label{lem:gaussian lipschitz concentration}
    Let $Q$ be a $d$-dimensional standard Gaussian and let $f:\R^d \to \R$ be $\sigma$-Lipschitz. Then $f(Q)$ is sub-Gaussian with variance proxy $\sigma^2$. 
\end{lemma}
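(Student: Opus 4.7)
The plan is to prove the tight sub-Gaussian MGF bound via the Gaussian logarithmic Sobolev inequality combined with the Herbst argument. First I would reduce to the convenient case where $f\in C^1(\R^d)$ with $\|\nabla f\|_\infty \le \sigma$ and $\E f(Q) = 0$: the reduction to smooth $f$ is standard mollification (convolving with a small centered Gaussian preserves the $\sigma$-Lipschitz constant, yields a $C^\infty$ approximant, and the MGF of the mollification converges to that of $f$ by dominated convergence), while shifting by $\E f(Q)$ is free.

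Next I would invoke the Gaussian log-Sobolev inequality: for every smooth $g\ge 0$,
\begin{equation*}
    \operatorname{Ent}(g^2) \eqdef \E\bigl[g(Q)^2\log g(Q)^2\bigr] - \E g(Q)^2 \cdot \log \E g(Q)^2 \;\le\; 2\,\E\bigl|\nabla g(Q)\bigr|^2.
\end{equation*}
This is the deep input and I would simply cite it (for instance from the same reference \cite{vershynin2018high}); it can be obtained by tensorization from the one-dimensional case and analysis of the Ornstein--Uhlenbeck semigroup. Applying the inequality to $g = e^{\lambda f/2}$ yields $|\nabla g|^2 = (\lambda^2/4)|\nabla f|^2 g^2 \le (\lambda^2\sigma^2/4)\,e^{\lambda f}$, and hence, writing $M(\lambda) \eqdef \E e^{\lambda f(Q)}$,
\begin{equation*}
    \lambda M'(\lambda) - M(\lambda)\log M(\lambda) \;\le\; \frac{\lambda^2\sigma^2}{2}\, M(\lambda).
\end{equation*}

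The Herbst argument converts this into the desired MGF bound. Dividing by $\lambda^2 M(\lambda)$ and setting $L(\lambda) \eqdef \log M(\lambda)$ with $L'(\lambda) = M'(\lambda)/M(\lambda)$, the inequality rearranges to
\begin{equation*}
    \frac{d}{d\lambda}\Bigl(\frac{L(\lambda)}{\lambda}\Bigr) \;=\; \frac{\lambda L'(\lambda) - L(\lambda)}{\lambda^2} \;\le\; \frac{\sigma^2}{2},
\end{equation*}
valid for $\lambda>0$. Since $\lim_{\lambda\downarrow 0} L(\lambda)/\lambda = L'(0) = \E f(Q) = 0$, integrating from $0$ to $\lambda$ gives $L(\lambda) \le \lambda^2\sigma^2/2$, i.e.\ $\E \exp\bigl(\lambda(f(Q)-\E f(Q))\bigr) \le \exp(\lambda^2\sigma^2/2)$ for $\lambda>0$. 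Applying the same argument to $-f$ (also $\sigma$-Lipschitz) extends the bound to $\lambda<0$, which is precisely the sub-Gaussian MGF inequality with variance proxy $\sigma^2$.

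The main obstacle is securing the Gaussian log-Sobolev inequality with the sharp constant $2$; everything else is bookkeeping. Without that sharp constant one still obtains sub-Gaussianity, but the variance proxy is inflated by a multiplicative factor (e.g.\ an elementary Maurey--Pisier rotational interpolation argument yields only $\pi^2\sigma^2/4$ in place of $\sigma^2$). Consequently, for the statement as worded, the sharpness of log-Sobolev is the essential ingredient, and I would treat it as the black box imported from the textbook reference.
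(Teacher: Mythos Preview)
Your proof is correct: the log-Sobolev plus Herbst argument is a standard route to this result and does yield the sharp variance proxy $\sigma^2$. However, note that the paper does not actually prove this lemma at all --- it is stated as a citation of \cite[Theorem 5.2.1]{vershynin2018high} and used as a black box. So there is nothing to compare against; you have simply supplied a (valid) proof where the paper chose to quote the literature.
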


The next lemma states the Chernoff bound for Poisson random variables. 
\begin{lemma}[{\cite[Theorem 5.4]{mitzenmacher2017probability}}]\label{lem:poisson tail}
    For all $\lambda > 0$ and $x\geq0$ we have
    \begin{align*}
        \P(\poi(\lambda)-\lambda \geq x) &\leq \exp\left(-\frac{x^2}{2(\lambda+x)}\right), \\
        \P(\poi(\lambda)-\lambda \leq -x) &\leq \exp\left(-\frac{x^2}{2\lambda}\right).
    \end{align*}
\end{lemma}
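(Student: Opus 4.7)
The plan is to apply the standard Chernoff method to each tail separately. Since $X \sim \poi(\lambda)$ has moment generating function $\E e^{tX} = \exp(\lambda(e^t - 1))$ valid for all $t \in \R$, Markov's inequality applied to $e^{tX}$ (or $e^{-tX}$) will give a Bennett-type exponential bound. Optimizing the exponent in $t$ produces a clean Bennett form, and a pair of elementary scalar inequalities then converts it into the Bernstein form stated in the lemma.

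For the upper tail, for $t > 0$ I would write
\begin{equation*}
\P(X - \lambda \geq x) \leq e^{-t(\lambda+x)} \E e^{tX} = \exp\bigl(\lambda(e^t - 1) - t(\lambda+x)\bigr),
\end{equation*}
then minimize in $t$ at the stationary point $t^\star = \ln(1+x/\lambda)$. Substituting yields
\begin{equation*}
\P(X - \lambda \geq x) \leq \exp\bigl(-\lambda\, h(x/\lambda)\bigr), \qquad h(u) \eqdef (1+u)\ln(1+u) - u,
\end{equation*}
and the claim reduces to the scalar inequality $h(u) \geq u^2/(2(1+u))$ for $u \geq 0$, since $\lambda h(x/\lambda) \geq x^2/(2(\lambda+x))$ matches the desired exponent. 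For the lower tail, the case $x > \lambda$ is vacuous (as $X \geq 0$); for $0 \leq x \leq \lambda$, applying Markov to $e^{-tX}$ with $t > 0$ gives
\begin{equation*}
\P(X - \lambda \leq -x) \leq \exp\bigl(\lambda(e^{-t} - 1) + t(\lambda - x)\bigr),
\end{equation*}
which is minimized at $t^\star = -\ln(1 - x/\lambda)$, producing $\exp(-\lambda\, g(x/\lambda))$ with $g(u) \eqdef u + (1-u)\ln(1-u)$. The claim then reduces to $g(u) \geq u^2/2$ on $[0,1)$.

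The only real content is these two scalar inequalities, and each is elementary. For $g$, observe that $g(0) = 0$ and $g'(u) = -\ln(1-u) \geq u$ by the Taylor series of $-\ln(1-u)$, so integrating on $[0,u]$ yields $g(u) \geq u^2/2$. For $h$, clearing denominators I would show that $F(u) \eqdef 2(1+u)\bigl[(1+u)\ln(1+u)-u\bigr] - u^2$ satisfies $F(0) = 0$ and $F'(u) = 4\bigl[(1+u)\ln(1+u) - u\bigr] \geq 0$ (the latter from $\ln(1+u) \geq u/(1+u)$, i.e. the $t^\star > 0$ case of the same Bennett exponent being nonnegative), hence $F \geq 0$ on $[0,\infty)$. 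To the extent there is an obstacle, it is just the bookkeeping of these two calculus lemmas; the rest is a routine moment generating function calculation.
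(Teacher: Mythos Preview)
Your proof is correct; the paper itself does not prove this lemma but simply cites it from the textbook \cite{mitzenmacher2017probability}, and your argument is precisely the standard Chernoff derivation that appears there. The two scalar inequalities you isolate are the right reductions, and your verifications (via $F'(u)=4h(u)\ge 0$ for the upper tail and $g'(u)=-\ln(1-u)\ge u$ for the lower tail) are clean and complete.
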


The following technical lemma is helpful in establishing the Bernstein concentration in Lemma \ref{lem:bernstein master}.  
\begin{lemma}\label{lem:q(S) enough}
    Let $a\geq0, p,q \in [0,1]$ and define $\tau = p(1-p)\wedge q(1-q), \nu = p(1-p)\vee q(1-q)$. Then it always holds that 
    \begin{align*}
        a\sqrt{\frac\nu2} &\leq a\sqrt{\tau} + a^2 + |p-q|.
    \end{align*}
    In particular, if $|p-q|\ge a\sqrt{\tau} + a^2$, then
    \begin{align*}
        4|p-q| &\geq a\sqrt{\tau} + a\sqrt{\nu} + a^2. 
    \end{align*}
\end{lemma}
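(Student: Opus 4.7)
The plan is a short direct calculation. First, assume without loss of generality that $\tau=p(1-p)$ and $\nu=q(1-q)$. The key algebraic identity is
\[
    \nu-\tau = q(1-q)-p(1-p) = (q-p)-(q^2-p^2) = (q-p)(1-p-q),
\]
so $|\nu-\tau|\le |p-q|$ because $|1-p-q|\le 1$ on $[0,1]$. Hence $\nu\le \tau+|p-q|$.

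Taking square roots and using subadditivity $\sqrt{x+y}\le\sqrt x+\sqrt y$ gives
\[
    \sqrt{\nu}\le \sqrt{\tau}+\sqrt{|p-q|}.
\]
Multiply by $a$ and apply AM--GM in the form $a\sqrt{|p-q|}\le \tfrac12(a^2+|p-q|)\le a^2+|p-q|$ to obtain
\[
    a\sqrt{\nu/2}\le a\sqrt{\nu}\le a\sqrt{\tau}+a\sqrt{|p-q|}\le a\sqrt{\tau}+a^2+|p-q|,
\]
which is the first claim. (The factor $1/\sqrt 2$ is not even needed here but is what the next step consumes.)

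For the ``in particular'' part, assume $|p-q|\ge a\sqrt{\tau}+a^2$. Plugging this into the first bound yields
\[
    a\sqrt{\nu/2}\le a\sqrt{\tau}+a^2+|p-q|\le 2|p-q|,
\]
so $a\sqrt{\nu}\le 2\sqrt 2\,|p-q|$. Since $a\sqrt{\tau}+a^2\le |p-q|$ by hypothesis,
\[
    a\sqrt{\tau}+a\sqrt{\nu}+a^2 \le |p-q|+2\sqrt 2\,|p-q| = (1+2\sqrt 2)|p-q|\le 4|p-q|,
\]
finishing the proof. There is no real obstacle here; the only observation worth flagging is the factorization $\nu-\tau=(q-p)(1-p-q)$, which turns the problem into an elementary square-root/AM--GM manipulation.
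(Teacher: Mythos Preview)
Your proof is correct, but takes a genuinely different route from the paper for the first inequality. The paper reduces by the symmetries $(p,q)\mapsto(q,p)$ and $(p,q)\mapsto(1-p,1-q)$ to the case $p\le q\le 1/2$, and then splits into two sub-cases according to whether $p\ge q/2$ (where $\nu\le 2\tau$) or $p< q/2$ (where $2a\sqrt{\nu}\le a^2+\nu\le a^2+q\le a^2+2(q-p)$ via AM--GM). Your argument instead exploits the factorization $\nu-\tau=(q-p)(1-p-q)$ to get $\nu\le\tau+|p-q|$ in one line, then finishes with subadditivity of the square root and AM--GM. This avoids all case work and in fact yields the slightly stronger bound $a\sqrt{\nu}\le a\sqrt{\tau}+a^2+|p-q|$ (without the factor $1/\sqrt2$ on the left). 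For the ``in particular'' part, your derivation via $1+2\sqrt2<4$ is the same as the paper's.
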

\begin{proof}
    After rearranging and noting that $1+2\sqrt{2}<4$, it is clear that the first inequality implies the second. Below we prove the first inequality. 

    Since the claim is invariant under the transformations $(p,q)\mapsto (q,p)$ and $(p,q)\mapsto (1-p,1-q)$, it suffices to consider the case where $p\le 1/2$ and $p(1-p)\le q(1-q)$. It further suffices to consider the case where $p\le q\le 1/2$: if not, then $p\le 1-q\le 1/2$, and the transformation $(p,q)\mapsto (p,1-q)$ keeps $(\tau,\nu)$ invariant while makes $|p-q|$ smaller. The proof is then completed by considering the following two scenarios: 
    \begin{itemize}
        \item if $p\ge q/2$, then $\nu = q(1-q)\le 2p(1-p)=2\tau$, so $a\sqrt{\nu/2}\le a\sqrt{\tau}$; 
        \item if $p\le q/2$, then $2a\sqrt{\nu}\le a^2+\nu\le a^2 + q\le a^2 + 2(q-p)$. 
    \end{itemize}
    
\end{proof}

\section{Omitted Proofs from Section \ref{sec:intro}}
\subsection{Proof of Lemma \ref{lemma:test_complexity}}
Before we prove Lemma \ref{lemma:test_complexity}, we begin with a technical lemma on the Bernstein concentration of the classifier-accuracy test \eqref{eqn:T_CA def}. 
\begin{lemma}\label{lem:bernstein master}
Suppose $A_1,\dots,A_n \stackrel{iid}{\sim} \ber(p)$ and $B_1,\dots,B_m \stackrel{iid}{\sim} \ber(q)$. Let $\tau = p(1-p)\wedge q(1-q)$ and define the averages $\bar A = \frac1n\sum_{i=1}^nA_i$ and $\bar B=\frac1m\sum_{j=1}^mB_j$. There exists a universal constant $c>0$ such that 
\begin{align*}
    \P\left(\left|\bar A-\bar B\right| \leq \frac12|p-q| - \frac12\sqrt{\frac{c\log(1/\delta) \tau}{n\land m}} - \frac12\frac{c\log(1/\delta)}{n\land m} \right) &\leq \delta, \\
    \P\left(\left|\bar A - \bar B\right| \geq 2|p-q| + 2\sqrt{\frac{c\log(1/\delta)\tau}{n\land m}} + 2\frac{c\log(1/\delta)}{n\land m}\right) &\leq \delta. 
\end{align*}
\end{lemma}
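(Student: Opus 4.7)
The plan is to apply Bernstein's inequality to $\bar A - p$ and $\bar B - q$ individually, combine via the triangle inequality, and then invoke the auxiliary Lemma~\ref{lem:q(S) enough} to replace the worst-case variance $\nu = p(1-p)\vee q(1-q)$ that naively appears by the stated $\tau = p(1-p)\wedge q(1-q)$.

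First I would apply the Bernstein bound for sums of bounded iid variables separately to the two samples. Writing $L = \log(1/\delta)$ and $N = n\wedge m$ for brevity, a union bound yields that with probability at least $1-\delta$,
\[
|\bar A - p| + |\bar B - q| \leq C_1\sqrt{\nu L/N} + C_2 L/N
\]
for absolute constants $C_1,C_2>0$; here I have used $p(1-p),q(1-q)\leq \nu$ together with $1/n,1/m\leq 1/N$. The forward and reverse triangle inequalities applied to $|\bar A - \bar B| - |p-q|$ then immediately give both claims of the lemma, except with $\sqrt{\nu}$ in place of $\sqrt{\tau}$.

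The main obstacle is precisely the downgrade from $\sqrt{\nu}$ to $\sqrt{\tau}$, since these quantities can differ substantially when $p$ and $q$ are very different. This is where Lemma~\ref{lem:q(S) enough} enters. Applying it with the choice $a = K\sqrt{L/N}$ for a constant $K$ chosen large enough in terms of $C_1$, I would obtain
\[
C_1\sqrt{\nu L/N} \leq C_1'\sqrt{\tau L/N} + C_1'' L/N + \tfrac{1}{2}|p-q|,
\]
where the extra $\tfrac{1}{2}|p-q|$ comes from rescaling the additive $|p-q|$ term in the lemma. This additive $|p-q|$ is the critical piece: plugging the display into the two triangle inequalities turns $|p-q|\pm C_1\sqrt{\nu L/N}$ into $\tfrac{3}{2}|p-q|+\cdots$ for the upper bound and $\tfrac{1}{2}|p-q|-\cdots$ for the lower bound, which after enlarging the universal constant $c$ to dominate $C_1',C_1'',C_2$ matches the constants $2$ and $1/2$ stated in the lemma.
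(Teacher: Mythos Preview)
Your proposal is correct and follows essentially the same approach as the paper: Bernstein on each sample, triangle inequality, then Lemma~\ref{lem:q(S) enough} to trade $\sqrt{\nu}$ for $\sqrt{\tau}$ at the cost of an additive $|p-q|$. The only stylistic difference is that the paper treats the lower-tail inequality via a case split (the bound is vacuous when $|p-q|$ is small, and otherwise the \emph{second} statement of Lemma~\ref{lem:q(S) enough} applies), whereas you use the \emph{first} statement of that lemma uniformly for both tails; your route is slightly more streamlined but not materially different.
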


\begin{proof}
Let $\nu = p(1-p)\vee q(1-q)$. Note that the first inequality is trivially true if
\begin{equation*}
|p-q| \leq \sqrt{\frac{c\log(1/\delta) \tau}{n\land m}} + \frac{c\log(1/\delta)}{n\land m}. 
\end{equation*}
Assuming otherwise, by the second statement of Lemma \ref{lem:q(S) enough}, the first probability is upper bounded by 
\begin{align*}
     \P\left(\left|\bar A-\bar B\right| \leq |p-q| - \frac{5}{8}\sqrt{\frac{c\log(1/\delta)\tau}{n\land m}} - \frac{1}{8}\sqrt{\frac{c\log(1/\delta)\nu}{n\land m}} - \frac{5}{8}\frac{c\log(1/\delta)}{n\land m} \right). 
\end{align*}
By choosing $c$ sufficiently large (independently of $p,q,n,m,\delta$), and applying Bernstein's inequality separately to both $\bar A$ and $\bar B$, the above probability can be made smaller than $\delta$.  

For the second inequality, using the first statement of Lemma \ref{lem:q(S) enough}, it is upper bounded by
\begin{align*}
    \P\left(|\bar A-\bar B| \geq |p-q| + \sqrt{\frac{c\log(1/\delta)\tau}{n\land m}} + \frac{1}{\sqrt2}\sqrt{\frac{c\log(1/\delta) \nu}{n\land m}} + \frac{c\log(1/\delta)}{n\land m}\right).
\end{align*}
Again, taking $c$ sufficiently large (independently of $p,q,n,m,\delta$) and applying Bernstein's inequality separately to both $\bar A$ and $\bar B$, the above probability can be made smaller than $\delta$. 
\end{proof}

Now we proceed to prove Lemma \ref{lemma:test_complexity}. Using $n$ test samples $(X,Y)$ from both $p$ and $q$, consider the following classifier-accuracy test: we accept $H_0$ if
\begin{align*}
\left| \frac{1}{n}\sum_{i=1}^n \left(\one(X_i\in S) - \one(Y_i\in S) \right) \right| \le \sqrt{\frac{c\overline{\tau}\log(1/\delta)}{n}} + \frac{c\log(1/\delta)}{n}, 
\end{align*}
and reject $H_0$ otherwise. Here $c>0$ is a large absolute constant, and we note that the threshold only relies on the knowledge of $\overline{\tau}$ in addition to $(n,\delta)$.  

To analyze the type-I and type-II errors, first assume that $H_0$ holds. Since $\sep(S)=0$ under $H_0$, the second statement of Lemma \ref{lem:bernstein master} implies that we accept $H_0$ with probability at least $1-\delta/2$ if $c>0$ is large enough. If $H_1$ holds, with probability at least $1-\delta/2$, by the first statement of Lemma \ref{lem:bernstein master} we have
\begin{align*}
    \left| \frac{1}{n}\sum_{i=1}^n \left(\one(X_i\in S) - \one(Y_i\in S) \right) \right| \ge |\underline{\sep}| - \left(\sqrt{\frac{c\overline{\tau}\log(1/\delta)}{n}} + \frac{c\log(1/\delta)}{n}\right). 
\end{align*}
By the lower bound of $n$ assumed in Lemma \ref{lemma:test_complexity}, in this case we will reject $H_0$, as desired.

\subsection{Proof of Proposition \ref{prop:balanced classifier}}
\begin{lemma}\label{lem:ratio max min}
    Let $\mu$ be a non-negative measure on some space $\cal X$ and let $a,b :\cal X \to \R_+$ such that $\int a(x)\D\mu(x)>0$ and $b(x)=0$ only if $a(x)=0$. Then 
    \begin{equation*}
        \inf_{x\in\operatorname{spt}(\mu)}\left(\frac{a(x)}{b(x)}\right) \leq \frac{\int a(x)\D\mu(x)}{\int b(x)\D\mu(x)} \leq \sup_{x\in\operatorname{spt}(\mu)}\left(\frac{a(x)}{b(x)}\right). 
    \end{equation*}
\end{lemma}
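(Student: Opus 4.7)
The statement is the classical ``mean value'' inequality for the ratio of two integrals, so the plan is the standard one: pull the extremal ratios outside the integrals and then divide. Let $U \eqdef \sup_{x\in\operatorname{spt}(\mu)} a(x)/b(x)$ and $L \eqdef \inf_{x\in\operatorname{spt}(\mu)} a(x)/b(x)$, with the convention that on the (possibly non-empty) set $\{b=0\}$ the ratio is omitted since, by hypothesis, $a=b=0$ there. The key pointwise inequalities are
\begin{equation*}
    L\, b(x) \;\le\; a(x) \;\le\; U\, b(x) \qquad \text{for $\mu$-a.e. } x,
\end{equation*}
which hold separately on $\{b>0\}$ (by definition of $L$ and $U$) and on $\{b=0\}$ (both sides vanish, using the hypothesis $b(x)=0\Rightarrow a(x)=0$).

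Next, I would integrate these inequalities against $\mu$, yielding
\begin{equation*}
    L\int b(x)\,\D\mu(x) \;\le\; \int a(x)\,\D\mu(x) \;\le\; U\int b(x)\,\D\mu(x).
\end{equation*}
Before dividing by $\int b\,\D\mu$, I need to check it is strictly positive. If not, then $b=0$ $\mu$-a.e., hence by the hypothesis also $a=0$ $\mu$-a.e., contradicting the assumption $\int a\,\D\mu>0$. So $\int b\,\D\mu > 0$, and dividing gives the desired two-sided bound.

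The only subtlety is the interpretation of the ratio on $\{b=0\}$, and the restriction of the inf/sup to $\operatorname{spt}(\mu)$; the first is handled cleanly by the stated hypothesis $b=0\Rightarrow a=0$, and the second is harmless because points outside $\operatorname{spt}(\mu)$ lie in a $\mu$-null open set and therefore do not affect the integrals. I do not expect any serious obstacle; this is essentially a one-line mean-value computation.
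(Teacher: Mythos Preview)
Your proposal is correct and is essentially the same argument as the paper's: the paper writes $\int a = \int (a/b)\,b$ (with the convention $0/0=1$) and pulls the sup/inf of $a/b$ outside, which is exactly your pointwise bound $L\,b\le a\le U\,b$ followed by integration. Your version is slightly more careful in explicitly verifying $\int b\,\D\mu>0$ before dividing, but otherwise the two proofs coincide.
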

\begin{proof}
    Defining $0/0=1$, we have
    \begin{align*}
        \int a(x) \D\mu(x) &= \int \frac{a(x)}{b(x)}b(x)\D\mu(x) \\
        &\leq \sup_{x\in\operatorname{spt}(\mu)} \left(\frac{a(x)}{b(x)}\right) \int b(x)\D\mu(x). 
    \end{align*}
    The other direction follows analogously. 
\end{proof}
\begin{proof}[Proof of Proposition \ref{prop:balanced classifier}]
    Let $p,q$ be the densities of $\P,\bb Q$ with respect to a common dominating measure, and let $E \eqdef \{x:p(x)> q(x)\}$ so that $\TV(\P,\bb Q)=\P(E)-\bb Q(E) > 0$. Assume without loss of generality that $\P(E) + \bb Q(E) \geq 1$. Given $t\in[0,1]$ define $E_t \eqdef \{x:\frac{p(x)-q(x)}{p(x)+q(x)} \ge t\}$, so that the map $t\mapsto \P(E_t) + \bb Q(E_t)$ is non-increasing and left-continuous. Note that $E_0=E$ while $E_1 = \varnothing$, so that $t^\star = \max\{t\in [0,1]: \P(E_t) + \bb Q(E_t) \ge 1\}$ exists. Now choose the randomized classifier $\cal C$ as follows: 
    \begin{align*}
        \cal C(x) = \begin{cases}
            0 & \text{if } x \in E_{(t^\star)^+}, \\
            1 & \text{if } x \notin E_{t^\star}, \\
            \ber(r) & \text{if } x \in E_{t^\star} -E_{(t^\star)^+}, 
        \end{cases}
    \end{align*}
    where $E_{(t^\star)^+} = \cap_{t>t^\star} E_t \subseteq E_{t^\star}$, and 
    \begin{align*}
        r := \frac{1 - \P(E_{(t^\star)^+}) - \bb Q(E_{(t^\star)^+})}{\P(E_{t^\star}) + \bb Q(E_{t^\star}) - \P(E_{(t^\star)^+}) - \bb Q(E_{(t^\star)^+})} \in [0,1]. 
    \end{align*}
    This classifier is balanced, as 
    \begin{align*}
        &\P(\cal C(X)=0) + \bb Q(\cal C(X)=0) \\
        &= \P(E_{(t^\star)^+}) + \bb Q(E_{(t^\star)^+}) + r(\P(E_{t^\star}) + \bb Q(E_{t^\star}) - \P(E_{(t^\star)^+}) - \bb Q(E_{(t^\star)^+})) \\
        &= 1. 
    \end{align*}

    For $t\in[0,1]$ define
    \begin{equation*}
        f(t) \eqdef \begin{cases} (\P(E_t)-\Q(E_t))/(\P(E_t)+\Q(E_t)) &\text{if } \P(E_t)+\Q(E_t) > 0, \\ 1 &\text{otherwise}. \end{cases}
    \end{equation*} 
Let $0 \leq t \leq s \leq 1$, we show that $f(t) \leq f(s)$. Without loss of generality assume that $f(s)<1$ and that $\P(E_s\backslash E_t)+\Q(E_s\backslash E_t) > 0$. Notice that $f(t) \leq f(s)$ if and only if
    \begin{align*}
        \frac{\int_{E_t\backslash E_s}(p(x)-q(x))\D x}{\int_{E_t\backslash E_s}(p(x)+q(x))\D x} \stackrel{!}{\leq} \frac{\int_{E_s}(p(x)-q(x))\D x}{\int_{E_s}(p(x)+q(x))\D x}.
    \end{align*}
    However, the above inequality follows from Lemma \ref{lem:ratio max min}. Thus, it holds that
    \begin{equation*}
        \frac{\P(\cal C(X)=0) - \bb Q(\cal C(X)=0)}{\P(\cal C(X)=0) + \bb Q(\cal C(X)=0)} \ge f(t^\star) \geq f(0) = \frac{\P(E) - \bb Q(E)}{\P(E) + \bb Q(E)}.
    \end{equation*}
    Plugging in $\P(\cal C(X)=0) + \bb Q(\cal C(X)=0) = 1$ and $\P(E)+\Q(E) \leq 2$ yields the result. 
    
    To show tightness, one can consider $p(x)=\one_{[0,1]}$, $q(x) = (1+\epsilon)\mathbbm{1}_{[0,1/(1+\epsilon)]}$, $\cal C(x) = \mathbbm{1}_{x\in (1/(2+\epsilon),1]}$, and let $\epsilon \to 0^+$.
\end{proof}

\section{Omitted Proofs from Section \ref{sec:learning sep sets}}
\subsection{Useful Lemmas}
Before we present the formal proofs, this section summarizes some useful lemmas on the expected value and sub-Gaussian concentration of the separation. 

\begin{lemma}\label{lem:E lower}
Let $\mu \geq \lambda \geq 0$ and $X\sim\poi(\mu),Y\sim\poi(\lambda)$. Then 
\begin{equation*}
    \P(X>Y)+ \frac12\P(X=Y) - \frac12\geq c\left(\frac{\mu-\lambda}{\sqrt{\lambda + 1}} \land 1\right)
\end{equation*}
holds, where $c>0$ is a universal constant. 
\end{lemma}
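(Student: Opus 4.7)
The plan is to set up a differential identity in $\mu$ and integrate. Define $\phi(\mu) := \P(X > Y) + \frac12\P(X = Y)$ with $X \sim \poi(\mu)$ and $Y \sim \poi(\lambda)$ independent. By the exchange symmetry of $(X,Y)$, one has $\phi(\lambda) = 1/2$, so the claim becomes a growth statement: $\phi(\mu) - \phi(\lambda) \geq c[(\mu-\lambda)/\sqrt{\lambda+1} \wedge 1]$.

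The first step is to compute $\phi'(\mu)$ cleanly. Starting from the Poisson identity $\frac{d}{d\mu}\P(X=k) = \P(X=k-1) - \P(X=k)$ and telescoping in the sums defining $\P(X>Y)$ and $\P(X=Y)$, I would obtain $\phi'(\mu) = \frac12\P(X=Y) + \frac12\P(X=Y-1)$. In particular $\phi$ is nondecreasing in $\mu$, a fact I will reuse.  Writing $\phi(\mu) - 1/2 = \int_\lambda^\mu \phi'(t)\,dt$, the problem reduces to bounding $\phi'(t)$ from below uniformly on $[\lambda, \mu]$.

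The main technical step is the estimate $\phi'(t) \geq c_0/\sqrt{\lambda+1}$ for all $t$ in the window $[\lambda, \lambda + \sqrt{\lambda+1}]$. For this I would discard the $\P(X=Y-1)$ term and use the Skellam identity $\P(X_t = Y) = e^{-t-\lambda} I_0(2\sqrt{t\lambda})$. For $\lambda \geq 1$, the uniform Bessel lower bound $I_0(x) \gtrsim e^x/\sqrt{x}$ combined with $(\sqrt{t}-\sqrt{\lambda})^2 \leq (t-\lambda)^2/(4\lambda) = O(1)$ in the relevant window gives $\P(X_t=Y) \gtrsim 1/(t\lambda)^{1/4} \gtrsim 1/\sqrt{\lambda+1}$, since $t \leq 2(\lambda+1)$. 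For $\lambda \leq 1$ (so $t \leq 1+\sqrt{2}$) the cruder bound $\P(X_t=Y) \geq \P(Y=0)\P(X_t=0) = e^{-t-\lambda}$ is already a positive universal constant, which dominates $1/\sqrt{\lambda+1}$.

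With the derivative lower bound in hand, the two branches of the claim are dispatched in turn. If $\mu - \lambda \leq \sqrt{\lambda+1}$, direct integration gives $\phi(\mu) - 1/2 \geq c_0(\mu-\lambda)/\sqrt{\lambda+1}$, the linear branch. If $\mu - \lambda > \sqrt{\lambda+1}$, monotonicity of $\phi$ in $\mu$ plus the previous estimate evaluated at the endpoint $\lambda + \sqrt{\lambda+1}$ yield $\phi(\mu) - 1/2 \geq \phi(\lambda+\sqrt{\lambda+1}) - 1/2 \geq c_0$, the saturated branch. I expect the main obstacle to be the uniform-in-$\lambda$ lower bound on $\P(X_t=Y)$: the Bessel asymptotic works only for $\lambda$ bounded away from zero, so the small-$\lambda$ regime must be stitched in separately, and the choice of window width $\sqrt{\lambda+1}$ is precisely what keeps the factor $(\sqrt{t}-\sqrt{\lambda})^2$ bounded so that the Bessel estimate does not lose an exponential factor.
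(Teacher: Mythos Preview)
Your proof is correct and follows the same skeleton as the paper's: define $\phi(t)=\P(\poi(t)>Y)+\tfrac12\P(\poi(t)=Y)$, compute $\phi'(t)=\tfrac12\P(\poi(t)=Y)+\tfrac12\P(\poi(t)=Y-1)$, lower bound this derivative on a window of width $\asymp\sqrt{\lambda+1}$ around $\lambda$, and integrate. The only real difference is the tool used for the derivative lower bound. You invoke the Skellam identity $\P(X_t=Y)=e^{-t-\lambda}I_0(2\sqrt{t\lambda})$ together with the Bessel asymptotic $I_0(x)\gtrsim e^x/\sqrt{x}$, which directly gives $\P(X_t=Y)\gtrsim e^{-(\sqrt t-\sqrt\lambda)^2}/(t\lambda)^{1/4}$ and hence the desired $\Omega(1/\sqrt{\lambda+1})$ on the window. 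The paper instead proves a pointwise estimate $\P(\poi(t)=y)=\Omega(1/\sqrt{t+1})$ for all integers $y$ with $|y-t|\le 8\sqrt t$ (by a three-regime case analysis in $t$), and then averages over $Y$ using Chebyshev's inequality $\P(|Y-\lambda|\le 2\sqrt\lambda)\ge 3/4$. Your route is shorter and avoids the case split at the cost of citing a special-function asymptotic; the paper's route is fully elementary. Both use essentially the same window and yield the same conclusion.
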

\begin{proof}
For $t \in [\lambda,\mu]$ define the function 
\begin{equation*}
    f(t) = \P(\poi(t) > Y) + \frac12 \P(\poi(t)=Y). 
\end{equation*}
Clearly $f(\lambda) = \frac12$. We have
\begin{align*}
    \frac{\d}{\d t} \P(\poi(t) > Y) = -\P(\poi(t) > Y) + \P(\poi(t) > Y-1) = \P(\poi(t) = Y).
\end{align*}
Similarly we get
\begin{align*}
    \frac{\d}{\d t} \P(\poi(t)=Y) = -\P(\poi(t)=Y)+\P(\poi(t)=Y-1).
\end{align*}
Thus, we obtain
\begin{align*}
    f'(t) &= \frac 12 \E\left[\P(\poi(t) \in \{Y-1,Y\})\right].
\end{align*}

Next we prove the following inequality: if $y$ is a non-negative integer with $|y-t|\le 8\sqrt{t}$, then 
\begin{align}\label{eq:Poisson_central_prob}
    \P(\poi(t) = y) = \Omega\left(\frac{1}{\sqrt{t+1}}\right). 
\end{align}
To prove \eqref{eq:Poisson_central_prob}, we distinguish three scenarios: 
\begin{enumerate}
    \item If $t<1/100$, then the only non-negative integer $y$ with $|y-t|\le 8\sqrt{t}$ is $y=0$. Therefore $\P(\poi(t)=y) = e^{-t} = \Omega(1)$.
    \item If $1/100\le t\le 100$, then $0\le y\le 180$. In this case, 
    \begin{align*}
        \P(\poi(t) = y) \ge \min_{1/100 \le t \le 100}\min_{0\le y\le 180} \P(\poi(t)=y) = \Omega(1).
    \end{align*}
    \item If $t>100$, then for $t-8\sqrt{t}\le y_1\le y_2\le t+8\sqrt{t}$, we have
    \begin{align*}
        \frac{\P(\poi(t)=y_1)}{\P(\poi(t)=y_2)} = t^{y_2-y_1}\frac{y_2!}{y_1!} = \prod_{y=y_1+1}^{y_2} \frac{t}{y} = \left(1\pm \cal O(t^{-1/2}) \right)^{\cal O(16\sqrt{t})} = \Theta(1). 
    \end{align*}
    In the above we have used that $|t/y-1| = \cal O(t^{-1/2})$ for all $y\in [y_1, y_2]$, and $y_2 - y_1 \le 16\sqrt{t}$. Consequently, 
    \begin{align*}
    \P(\poi(t)=y) = \Omega\left(\frac{\P(|\poi(t)-t|\le 8\sqrt{t})}{16\sqrt{t}}\right) = \Omega\left(\frac{1}{\sqrt{t}}\right), 
    \end{align*}
    where the last step is due to Chebyshev's inequality. 
\end{enumerate}

Now we apply \eqref{eq:Poisson_central_prob} to prove Lemma \ref{lem:E lower}. We first show that for non-negative integer $y$, 
\begin{align}\label{eq:event_implication}
\{ |y-\lambda|\le 2\sqrt{\lambda} \} \wedge \{ \sqrt{\lambda} \le \sqrt{t} \le \sqrt{\lambda} + 1 \} \Longrightarrow \{ |y-t|\le 8\sqrt{t} \}. 
\end{align}
In fact, if $\sqrt{\lambda}<\sqrt{2}-1$, then the LHS of \eqref{eq:event_implication} implies that $y=0$ and $t< 2$, thus \eqref{eq:event_implication} holds. If $\sqrt{\lambda}\ge \sqrt{2}-1$, then the LHS of \eqref{eq:event_implication} implies that
\begin{align*}
    |y-t| \le |y-\lambda| + (t - \lambda) \le 2\sqrt{\lambda} + (2\sqrt{\lambda} + 1) < 8\sqrt{\lambda}\le 8\sqrt{t}, 
\end{align*}
and \eqref{eq:event_implication} holds as well. Next, by \eqref{eq:Poisson_central_prob} and \eqref{eq:event_implication}, as well as Chebyshev's inequality $\P(|Y-\lambda| \leq 2\sqrt{\lambda}) \geq \frac34$, we have
\begin{align*}
    f'(t) &\geq \frac 38 \min\limits_{y\geq 0:|y-\lambda|\leq2\sqrt{\lambda}} \P(\poi(t) =y) \\
    &\ge \frac{3}{8} \one\{ \sqrt{\lambda} \le \sqrt{t} \le \sqrt{\lambda} + 1 \}\cdot \min_{|y-t|\le 8\sqrt{t}} \P(\poi(t) =y) \\
    &= \Omega\left(\frac{\one\{\sqrt{\lambda} \le \sqrt{t} \le \sqrt{\lambda} + 1 \} }{\sqrt{t+1}}\right) = \Omega\left(\frac{\one\{\sqrt{\lambda} \le \sqrt{t} \le \sqrt{\lambda} + 1 \} }{\sqrt{\lambda +1}}\right).
\end{align*}
Finally, for some absolute constant $c>0$ it holds that
\begin{align*}
f(\mu) - f(\lambda) = \int_\lambda^\mu f'(t)\d t \ge c\int_\lambda^\mu \frac{\one\{\sqrt{\lambda} \le \sqrt{t} \le \sqrt{\lambda} + 1 \} }{\sqrt{\lambda +1}} \d t \ge c\left(\frac{\mu-\lambda}{\sqrt{\lambda+1}}\wedge 1\right),
\end{align*}
which is the statement of the lemma. 
\end{proof}

\begin{lemma}\label{lem:scheffe separation variance}
For any $D\subseteq [k]$, each of $\sep(\hat S_s(D)), s\in\{>,<,1/2\}$ is sub-Gaussian with variance proxy $\sigma^2$ which can be bounded as  
\begin{align*}
    \sigma^2 \lesssim \sum_{i \in D} (p_i-q_i)^2 \land \frac{p_i+q_i}{n} = \cal O\left(\frac1n\right), 
\end{align*}
with universal hidden constants. 
\end{lemma}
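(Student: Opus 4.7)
The starting point is that Poissonization makes the pairs $(nX_i, nY_i)_{i\in[k]}$ mutually independent, so for each $s\in\{>,<,1/2\}$ the indicators $B_i\eqdef \one\{i\in\hat S_s(D)\}$ are independent Bernoulli$(\pi_i)$ random variables (the external coins in the $\hat S_{1/2}$ tie-breaking rule preserve this independence). Writing $\sep(\hat S_s(D))=\sum_{i\in D}(p_i-q_i)B_i$ and using that the sub-Gaussian variance proxy of a sum of independent random variables is bounded by the sum of the individual proxies, the goal reduces to proving the per-coordinate estimate $(p_i-q_i)^2\sigma^2(B_i)\lesssim \min\{(p_i-q_i)^2,(p_i+q_i)/n\}$ and then summing over $i\in D$. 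The $\cal O(1/n)$ consequence is immediate from $\sum_{i\in D}(p_i+q_i)\le 2$.

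To obtain the per-coordinate bound I combine two estimates. Since $B_i\in\{0,1\}$, Hoeffding's lemma yields $\sigma^2(B_i)\le 1/4$, giving the first branch $(p_i-q_i)^2\sigma^2(B_i)\le (p_i-q_i)^2/4$. For the second branch I invoke the Kearns--Saul sharp Bernoulli variance proxy $\sigma^2(B_i)=(1-2\pi_i)/[2\log((1-\pi_i)/\pi_i)]$, which is upper-bounded by $1/\log(1/\min\{\pi_i,1-\pi_i\})$ up to a universal constant whenever $\min\{\pi_i,1-\pi_i\}\le 1/4$. A Skellam-type Chernoff bound, obtained by applying Lemma \ref{lem:poisson tail} separately to $nX_i$ and $nY_i$ around the midpoint $n(p_i+q_i)/2$, yields $\min\{\pi_i,1-\pi_i\}\le 2\exp(-c n(p_i-q_i)^2/(p_i+q_i))$ for an absolute $c>0$. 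Composing these two estimates gives $\sigma^2(B_i)\lesssim (p_i+q_i)/(n(p_i-q_i)^2)$ and therefore $(p_i-q_i)^2\sigma^2(B_i)\lesssim (p_i+q_i)/n$.

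The two estimates cover complementary regimes: the Kearns--Saul branch is informative precisely when $n(p_i-q_i)^2\gtrsim p_i+q_i$, while in the complementary regime the Skellam bound forces $(p_i-q_i)^2\le C(p_i+q_i)/n$ for a universal $C$, and Hoeffding alone already matches the minimum up to constants. The main technical obstacle is to handle the transition window where $\min\{\pi_i,1-\pi_i\}$ is only moderately small (say in $(1/4,1/2)$), where the Kearns--Saul bound degenerates to the Hoeffding constant $1/4$; however, precisely in that window the Skellam concentration forces $n(p_i-q_i)^2\lesssim p_i+q_i$, so the Hoeffding branch supplies the needed $(p_i+q_i)/n$ bound for free. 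For the tie-breaking case $\hat S_{1/2}$, the extra term $\P(X_i=Y_i)$ appearing in $\pi_i$ is controlled by the same Poisson concentration, and contributes nothing when $p_i=q_i$ since then the entire term $(p_i-q_i)^2\sigma^2(B_i)$ vanishes.
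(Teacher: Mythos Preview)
Your proposal is correct and follows essentially the same route as the paper's proof: both decompose $\sep(\hat S_s(D))=\sum_{i\in D}(p_i-q_i)B_i$ into independent Bernoulli summands, bound $\min\{\pi_i,1-\pi_i\}$ via the Poisson tails of Lemma~\ref{lem:poisson tail}, and then combine the Hoeffding bound $\sigma^2(B_i)\le 1/4$ with the sharp Bernoulli (Kearns--Saul/Buldygin--Moskvichova) variance proxy, splitting into the two regimes according to whether $n(p_i-q_i)^2/(p_i+q_i)$ is below or above a universal threshold. The paper makes this split explicit by defining the set $T=\{i:2\exp(-cn(p_i-q_i)^2/(p_i+q_i))\ge 1/4\}$, but this is exactly your ``transition window'' argument.
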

\begin{proof}
 Using standard tail bounds of the Poisson distribution (Lemma \ref{lem:poisson tail}) we have for any $i \in D$ with $p_i>q_i$, 
\begin{align*}
    &\P(i \in \hat S_<(D)) \leq \P(i \not\in \hat S_{1/2}(D)) \leq \P(i \not\in \hat S_>(D)) \\
    &= \P(\poi(np_i) \leq \poi(nq_i)) \\
    &\leq \P\left(\poi(np_i) - np_i \leq -\frac12n(p_i-q_i)\right) + \P\left(\poi(nq_i) - nq_i > \frac12n(p_i-q_i)\right) \\
    &\leq 2\exp\left(-c\frac{n(p_i-q_i)^2}{p_i+q_i}\right)
\end{align*}
for some universal $c>0$. Similarly, if $i \in D$ with $p_i \leq q_i$ we get
\begin{align*}
    \P(i \in \hat S_>(D)) \leq \P(i \in \hat S_{1/2}(D)) \leq \P(i \notin \hat S_<(D)) = \P(\poi(np_i) \geq \poi(nq_i)) \leq 2\exp\left(-c\frac{n(p_i-q_i)^2}{p_i+q_i}\right).
\end{align*}

Using these estimates we turn to bounding the moment generating function of $\sep(\hat S_s)$ for $s \in \{>,<,1/2\}$. Before doing so, recall \cite[Theorem 2.1]{buldygin2013sub} that the best-possible sub-Gaussian variance proxy $\sigma^2_\sf{opt}(\mu)$ of the $\ber(\mu)$ distribution satisfies
\begin{equation*}
    \sigma^2_\sf{opt}(\mu) = \frac{\frac12 - \mu}{\log\left(\frac{1}{\mu}-1\right)}, 
\end{equation*}
where the values for $\mu \in \{0,\frac12,1\}$ should be understood as the limit of the above expression (resulting in $\sigma^2_\sf{opt} = 0,\frac14,0$ respectively). Notice also that $\mu \mapsto\sigma^2_\sf{opt}(\mu)$ is increasing on $[0,\frac12]$ and decreasing on $[\frac12,1]$, and
\begin{align*}
 \sigma^2_\sf{opt}(\mu) \le \begin{cases}
     \frac{2}{\log(2/\mu)} &\text{if } 0<\mu<1/4, \\
     1/4 & \text{if } 1/4\le \mu\le 3/4, \\
     \frac{2}{\log(2/(1-\mu))} &\text{if } 3/4<\mu<1. 
 \end{cases}
\end{align*}

Let $T \subseteq D$ denote the subset of indices given by $$T = \left\{i\in D:2\exp\left(-c\frac{n(p_i-q_i)^2}{p_i+q_i}\right) \geq \frac14\right\} = \left\{i\in D:(p_i-q_i)^2 \leq \frac{p_i+q_i}{n} \frac{\log(8)}{c}\right\}.$$ Now, for any $s\in \{>,<,1/2\}$, the sub-Gaussian variance proxy $\sigma_s^2$ of $\sep(\hat{S}_s) - \E \sep(\hat{S}_s) = \sum_{i \in D} (p_i-q_i)(\one\{i\in\hat S_s\}-\P(i\in\hat S_s)))$ is at most
\begin{align*}
\sigma_s^2 \le \sum_{i\in T} \frac{(p_i-q_i)^2}{4} + \sum_{i\in D\setminus T} (p_i-q_i)^2\cdot \frac{2(p_i+q_i)}{cn(p_i-q_i)^2} \lesssim \sum_{i \in D} (p_i-q_i)^2 \land \frac{p_i+q_i}{n}, 
\end{align*}
where the second step used the definition of $T$. In particular, since $\sum_{i\in D}(p_i+q_i)/n\le 2/n$, the above expression is always upper bounded by $\cal O(1/n)$. 
\end{proof}

\subsection{Proof of Proposition \ref{prop:S_1/2 E sep}}
By Lemma \ref{lem:E lower}, we have
\begin{align*}
    \E\sep(\hat S_{1/2}) &= \sum_{i\in[k]} \P(i\in \hat S_{1/2})(p_i-q_i) \\
    &= \sum_{i\in[k]}(\P(i\in \hat S_{1/2})-\frac12)(p_i-q_i) \\
    &\gtrsim \sum_{i\in[k]} \left(\frac{n|p_i-q_i|}{\sqrt{n (p_i\land q_i)+1}}\land1\right) |p_i-q_i| \\
    &\geq \min\limits_{G\subseteq[k]} \left\{ \sum_{i\in G} \frac{n(p_i-q_i)^2}{\sqrt{n(q_i\land p_i)+1}} + \sum_{i\not\in G} |p_i-q_i|\right\}. 
\end{align*}
Applying the Cauchy-Schwarz inequality twice, we can bound the first term above by
\begin{equation*}
    \sum_{i \in G}\frac{n(p_i-q_i)^2}{\sqrt{n(q_i+p_i)+1}} \geq \frac{n\left(\sum_{i\in G}|p_i-q_i|\right)^2}{\sum_{i\in G}\sqrt{n(q_i+p_i)+1}} \geq  \frac{n\left(\sum_{i\in G} |p_i-q_i|\right)^2}{\sqrt{2nk+k^2}}. 
\end{equation*}
Therefore, we get the lower bound
\begin{align*}
    \E\sep(\hat S_{1/2}) \gtrsim \min\limits_{0\leq\epsilon_1\leq\epsilon} \left\{\frac{n\epsilon_1^2}{\sqrt{k(n+k)}} + \epsilon-\epsilon_1\right\} = \begin{cases} {\epsilon^2\over \lambda} &\text{if } \epsilon < {\lambda \over 2} \\
    \epsilon-{\lambda \over 4} \ge {\epsilon\over 2} &\text{if } \epsilon \ge  {\lambda \over 2}
    \end{cases}  \gtrsim \epsilon^2 \left({1\over \epsilon} \wedge \sqrt{n\over k} \wedge {n\over k}\right) 
\end{align*}
where $\lambda = {\sqrt{k(n+k)} \over n} \asymp \sqrt{k\over n} \vee {k\over n}$. 
 
By Lemma \ref{lem:scheffe separation variance} we know that $\sep(\hat S_{1/2})$ is sub-Gaussian with variance proxy $\cal O(1/n)$, which implies that $|\sep(\hat S_{1/2})| \gtrsim \epsilon^2(\frac1\epsilon \land \sqrt{\frac nk} \land \frac nk)$ with probability at least $1-\delta$, provided that 
 \begin{equation*}
     \epsilon^2\left(\frac1\epsilon\land\sqrt{\frac nk}\land\frac nk\right) \gtrsim \sqrt{\frac{\log(1/\delta)}{n}}. 
 \end{equation*}
 The above rearranges to $n\gtrsim n_\TS(\epsilon, \delta, \cal P_\sf{D})$. 

 \subsection{Proof of Proposition \ref{prop:neg sep}}
 A direct computation gives
    \begin{align*}
        2\E\sep(\hat S_>) &= 2\sum_{i=1}^{3k} (p_i-q_i)\P(i\in\hat S_>) \\
        &=-\P\left(\poi\left(\frac{n}{2k}\right) > \poi\left(\frac nk\right)\right) + 1-e^{-n/(4k)} \\
        &\leq -(1-e^{-n/(2k)})e^{-n/k} + 1-e^{-n/(4k)} \\
        &= -e^{-n/k}+e^{-3n/(2k)} + 1-e^{-n/(4k)} \leq 0, 
    \end{align*}
    for $\exp(-n/(4k)) \gtrapprox 0.86$. Rearranging, this gives the sufficient condition $n/k \le 0.6$.

\subsection{Proof of Proposition \ref{prop:S_> + S_geq E lower}}
Similar to the proof of Proposition \ref{prop:S_1/2 E sep}, we have by Lemma \ref{lem:E lower} that 
\begin{align*}
    \E\sep(\hat S_{1/2}(D)) &= \sum_{i \in D} (p_i-q_i) \P(i\in\hat S_{1/2}(D)) \geq c\cal E(D) + \frac12 \{p(D)-q(D)\} \\
    -\E\sep(D\setminus\hat S_{1/2}(D)) &= \sum_{i \in D} (q_i-p_i)\P(i\not\in\hat S_{1/2}(D)) \geq c\cal E(D) + \frac12\{q(D)-p(D)\}
\end{align*}
where $c>0$ is universal and $\cal E(D) = \sum_{i \in D} \frac{n|p_i-q_i|^2}{\sqrt{n(p_i\land q_i)+1}} \land |p_i-q_i|$. Therefore, 
\begin{align}\label{eqn:E + E >}
    \E\left[\sep(\hat S_>(D))-\sep(\hat S_<(D))\right] &= \E\left[\sep(\hat S_{1/2}(D)) - \sep(D\setminus \hat S_{1/2}(D))\right] \geq 2c\cal E(D). 
\end{align}
The bound on the sub-Gaussian variance proxy follows directly from Lemma \ref{lem:scheffe separation variance}. 

\subsection{Proof of Corollary \ref{cor:sep+tau for P_Db}}
By a two-fold sample splitting, suppose that we have independent held out samples $(\tilde X, \tilde Y)$ identical in distribution to $(X,Y)$. In the sequel we will use samples $(X,Y)$ to construct two separating sets, and use samples $(\tilde X, \tilde Y)$ to make a choice between them. 

Let the sets $\hat S_> \eqdef \hat S_>([k]), \hat S_<\eqdef\hat S_<([k])$ be constructed using $X,Y$. By Proposition \ref{prop:S_1/2 E sep} and \ref{prop:S_> + S_geq E lower}, we have
\begin{align*}
    |\E\sep(\hat S_>)| \lor |\E\sep(\hat S_<)| &\gtrsim \epsilon^2\left(\frac{1}{\epsilon}\wedge \sqrt{\frac{n}{k}} \wedge \frac{n}{k}\right), \\
    \sigma^2(\hat S_>) + \sigma^2(\hat S_<) & \lesssim \sum_{i\in [k]} \frac{p_i+q_i}{n} \lesssim \frac{1}{k\vee n},
\end{align*}
where the last step have used that $p_i+q_i\lesssim 1/k$ in $\cal P_{\sf{Db}}$. Going forward, we assume that 
\begin{align*}
    \epsilon^2\left(\frac1\epsilon\land\sqrt{\frac nk}\land\frac nk\right) \gtrsim\sqrt{\frac{\log(1/\delta)}{k\lor n}}, 
\end{align*}
which rearranges to $n\gtrsim n_\GoF(\epsilon, \delta, \cal P_\sf{D})$. Consequently, this ensures that $|\sep(\hat S_>)|\lor|\sep(\hat S_<)| \gtrsim \epsilon^2\left(\frac1\epsilon\land\sqrt{\frac nk}\land\frac nk\right)$ with probability $1-\cal O(\delta)$. Moreover, as $n\gtrsim \log(1/\delta)$, with probability at least $1-\delta$ we have $\poi(n)\le 2n$ (cf. Lemma \ref{lem:poisson tail}). Under this event, one has $|\hat S_>| \vee |\hat S_<|\le 2n$, and 
\begin{align*}
    \tau(\hat S_>) \vee \tau(\hat S_<) \lesssim \frac{|\hat S_>| \vee |\hat S_<|}{k} \wedge 1 \le \frac{2n}{k}\wedge 1. 
\end{align*}

Next we make a choice between $\hat S_>$ and $\hat S_<$ based on held out samples $(\tilde X, \tilde Y)$. Let $\hat p, \hat q$ denote the empirical pmfs constructed using $\tilde X, \tilde Y$ respectively. For any set $A\subseteq[k]$ write $\widehat{\sep}(A) = \hat p(A)-\hat q(A)$. We define our final estimator to be
\begin{equation*}
    \hat S = \begin{cases}
        \hat S_> &\text{if } |\widehat{\sep}(\hat S_>)| \geq |\widehat{\sep}(\hat S_<)|, \\ 
        \hat S_< &\text{otherwise.} 
    \end{cases}
\end{equation*}
Clearly $\tau(\hat S)\le \tau(\hat S_>) \vee \tau(\hat S_<)\lesssim 1\wedge (n/k)$. To show the high-probability separation of $\hat S$, note that by Lemma \ref{lem:bernstein master}, it holds with probability at least $1-\cal O(\delta)$ that
\begin{align*}
|\sep(\hat S)| &\ge \frac{1}{2}|\widehat{\sep}(\hat S)| - \cal O\left(\sqrt{\frac{\tau(\hat S)\log(1/\delta)}{n}} + \frac{\log(1/\delta)}{n} \right) \\
&= \frac{1}{2}|\widehat{\sep}(\hat S_>)| \vee |\widehat{\sep}(\hat S_<)| - \cal O\left(\sqrt{\frac{\log(1/\delta)}{n\vee k}} + \frac{\log(1/\delta)}{n} \right) \\
&\ge \frac{1}{4}|\sep(\hat S_>)| \vee |\sep(\hat S_<)| - \cal O\left(\sqrt{\frac{\log(1/\delta)}{n\vee k}} + \frac{\log(1/\delta)}{n} \right) \\
&= \Omega\left(\epsilon^2\left(\frac1\epsilon\land\sqrt{\frac nk}\land\frac nk\right)  \right) - \cal O\left(\sqrt{\frac{\log(1/\delta)}{n\vee k}} + \frac{\log(1/\delta)}{n} \right). 
\end{align*}
Here the first term always dominates the second as long as $n\gtrsim n_\GoF(\epsilon, \delta, \cal P_\sf{D})$.

\subsection{Proof of Proposition \ref{prop:sep + tau guarantees P_D}}

Similar to the proof of Corollary \ref{cor:sep+tau for P_Db}, we apply a two-fold sample splitting to obtain $n$ independent held out samples $(\tilde X, \tilde Y)$. In the sequel we construct $2(\ell+2)$ candidate separating sets from $(X,Y)$, and make a choice among them using held out samples $(\tilde X, \tilde Y)$. 

The construction of the $2(\ell+2)$ separating sets is simple: for each $j\in \{0,1,\cdots,\ell+1\}$, we construct two sets $\hat S_>(D_j)$ and $\hat S_<(D_j)$. The following lemma summarizes some properties of these separating sets. Recall that we assume that $t = k\land(c_0 m/\log(1/\delta)) > n$ so that $\ell = \lceil \log_2(t/n)\rceil \geq 1$. 
\begin{lemma}\label{lemma:separating_sets}
Fix any $j\in \{0,1,\cdots,\ell+1\}$, and let $\epsilon_j = \sum_{i\in D_j} |p_i - q_i|$. With probability at least $1-\delta$, the following statements hold: 
\begin{enumerate}
    \item if $j=0$, then
    \begin{align*}
    \left|\sep(\hat S_>(D_0)) \right| \vee \left|\sep(\hat S_<(D_0)) \right| \gtrsim E_0 - \cal O\left(\sqrt{\frac{E_0\log(1/\delta)}{n}}\right), 
\end{align*}
    where
    \begin{align*}
        E_0 = \sum_{i\in D_0} n|p_i-q_i|^2 \wedge |p_i-q_i| \gtrsim \frac{n \epsilon_0^2}{k} =: \tilde E_0(\epsilon_0). 
    \end{align*}
    \item if $j\in [\ell]$, then
    \begin{align*}
        \left|\sep(\hat S_>(D_j)) \right| \vee \left|\sep(\hat S_<(D_j)) \right| \gtrsim E_j - \cal O\left(\sqrt{\frac{E_j\log(1/\delta)}{n}}\right),
    \end{align*}
    where
    \begin{align*}
        E_{j} = \sum_{i\in D_{j}} n|p_i-q_i|^2 \wedge |p_i-q_i| \gtrsim \frac{n \epsilon_j^2}{\sqrt{kt/2^j}}=: \tilde E_j(\epsilon_j). 
    \end{align*}
    \item if $j=\ell+1$, then
        \begin{align*}
        \left|\sep(\hat S_>(D_{\ell+1})) \right| \vee \left|\sep(\hat S_<(D_{\ell+1})) \right| \gtrsim E_{\ell+1} - \cal O\left(\sqrt{\frac{\log(1/\delta)}{n}}\right),
    \end{align*}
    where
    \begin{align*}
        E_{\ell+1} = \sum_{i\in D_{\ell+1}} \frac{n|p_i-q_i|^2}{\sqrt{nq_i}} \wedge |p_i-q_i| \gtrsim \sqrt{\frac{n}{k}}\epsilon_{\ell+1}^2 =: \tilde E_{\ell+1}(\epsilon_{\ell+1}). 
    \end{align*}
\end{enumerate}
\end{lemma}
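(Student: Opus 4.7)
The plan is to combine Proposition \ref{prop:S_> + S_geq E lower} with the localization guarantee of Lemma \ref{lemma:localization} to produce the expectation bound in terms of $E_j$, then to apply Cauchy--Schwarz together with the size estimates on $|D_j|$ to obtain $E_j \gtrsim \tilde E_j(\epsilon_j)$, and finally to invoke sub-Gaussian concentration via Lemma \ref{lem:scheffe separation variance} to pass from expectation to high probability.

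For the first step, I would apply Proposition \ref{prop:S_> + S_geq E lower} with $D=D_j$, which gives that at least one of $|\E\sep(\hat S_>(D_j))|$ and $|\E\sep(\hat S_<(D_j))|$ is at least a constant multiple of $\sum_{i\in D_j}\frac{n(p_i-q_i)^2}{\sqrt{n(p_i\wedge q_i)+1}}\wedge|p_i-q_i|$. On the event of Lemma \ref{lemma:localization}, the estimate $q_i\leq 2^{j+1}/t$ for $i\in D_j$ with $j\leq\ell$, together with $\ell=\lceil\log_2(t/n)\rceil$ (hence $2^{\ell+1}/t<4/n$), yields $nq_i=O(1)$, so the denominator $\sqrt{n(p_i\wedge q_i)+1}$ is $\Theta(1)$ and the RHS reduces to $E_j$. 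For $j=\ell+1$, the complementary lower bound $q_i>2^{\ell-1}/t\geq 1/(2n)$ gives $nq_i\geq 1/2$, so $\sqrt{n(p_i\wedge q_i)+1}$ is comparable to $\sqrt{nq_i}$ (in the subcase $p_i\leq q_i$, the denominator $\sqrt{np_i+1}\geq 1$ only makes the summand larger than its $\sqrt{nq_i}$ version), yielding the $E_{\ell+1}$ formula.

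For the second step, I would split $D_j=G_j\cup B_j$ with $G_j=\{i:n|p_i-q_i|\leq 1\}$. For $j\leq\ell$, Cauchy--Schwarz on $G_j$ gives $\sum_{G_j}n(p_i-q_i)^2\geq n\big(\sum_{G_j}|p_i-q_i|\big)^2/|D_j|$, and minimizing over the split between $G_j$ and $B_j$ yields $E_j\gtrsim\min(n\epsilon_j^2/|D_j|,\epsilon_j)$. Combining with $|D_0|\leq k$ and with $|D_j|\leq k\wedge 4t/2^j\leq 2\sqrt{kt/2^j}$ for $j\in[\ell]$ (the latter from the defining property $\hat q_i^0>2^{j-1}/t$, $\sum_i\hat q_i^0\leq 1$, and AM--GM) gives the stated $\tilde E_0,\tilde E_j$ in the regime $\tilde E_j\leq\epsilon_j$ where the claim is non-vacuous. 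For $j=\ell+1$, Cauchy--Schwarz applied directly to the weighted sum gives $\sum_i\frac{n(p_i-q_i)^2}{\sqrt{nq_i}}\geq n\epsilon_{\ell+1}^2/\sum_i\sqrt{nq_i}$, and $\sum_i\sqrt{nq_i}\leq\sqrt{|D_{\ell+1}|\sum_i nq_i}\leq\sqrt{nk}$ produces $\tilde E_{\ell+1}=\sqrt{n/k}\,\epsilon_{\ell+1}^2$.

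For the last step, I would invoke Lemma \ref{lem:scheffe separation variance} to bound the sub-Gaussian variance proxy of each $\sep(\hat S_s(D_j))$ by $\sigma^2\lesssim\sum_{i\in D_j}(p_i-q_i)^2\wedge(p_i+q_i)/n$. For $j=\ell+1$ the generic bound $\sigma^2=O(1/n)$ suffices and produces the deviation $\sqrt{\log(1/\delta)/n}$. The main obstacle will be the refined variance estimate $\sigma^2\lesssim E_j/n$ needed for $j\leq\ell$, which is required to yield the sharper Bernstein-like deviation $\sqrt{E_j\log(1/\delta)/n}$: exploiting $q_i=O(1/n)$ in this regime and the identity $p_i+q_i=|p_i-q_i|+2\min(p_i,q_i)$, one has $(p_i+q_i)/n\leq|p_i-q_i|/n+O(1/n^2)$, and a termwise case split on whether $n|p_i-q_i|\leq 1$ shows each $\sigma^2$-summand is bounded by a constant multiple of the corresponding $E_j/n$-summand. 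Feeding this into the standard sub-Gaussian tail yields the stated deviations, and a union bound over $\{\hat S_>,\hat S_<\}$ together with the localization event of Lemma \ref{lemma:localization} completes the proof.
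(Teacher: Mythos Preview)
Your proposal is correct and follows essentially the same approach as the paper: Proposition~\ref{prop:S_> + S_geq E lower} combined with Lemma~\ref{lemma:localization} for the expectation bound, Cauchy--Schwarz together with the cardinality bounds $|D_0|\le k$, $|D_j|\le 2t/2^j\lesssim\sqrt{kt/2^j}$ for the $E_j\gtrsim\tilde E_j$ step, and Lemma~\ref{lem:scheffe separation variance} (refined to $\sigma^2\lesssim E_j/n$ when $q_i=\cal O(1/n)$) for the concentration. The only minor point to tighten is your remark ``in the regime $\tilde E_j\le\epsilon_j$ where the claim is non-vacuous'': under the standing assumption $n<t\le k$ one has $n/k<1$, $n/\sqrt{kt/2^j}\lesssim\sqrt{n/k}<1$, and $\sqrt{n/k}<1$, so the minimum in the Cauchy--Schwarz optimization is always attained at the $\tilde E_j$ branch (up to constants), and the bound $E_j\gtrsim\tilde E_j$ holds unconditionally.
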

\begin{proof}
We prove the above statements separately. 
\begin{enumerate}
    \item Case I: $j=0$. By Proposition \ref{prop:S_> + S_geq E lower}, it holds that
    \begin{align*}
    \E[\sep(\hat S_>(D_0)) - \sep(\hat S_<(D_0))] \gtrsim \sum_{i\in D_0} n|p_i-q_i|^2 \wedge |p_i-q_i| = E_0, 
    \end{align*}
    where we have used Lemma \ref{lemma:localization} that $q_i\le 2/t\le 2/n$ for all $i\in D_0$. Moreover, 
    \begin{align*}
        &\sigma^2(\sep(\hat S_>(D_0)))\vee \sigma^2(\sep(\hat S_<(D_0))) \\
        &\lesssim \sum_{i\in D_0} |p_i-q_i|^2 \wedge \frac{p_i+q_i}{n} \lesssim \sum_{i\in D_0} \frac{1}{n}\left(n|p_i-q_i|^2 \wedge |p_i-q_i|\right) = \frac{E_0}{n}, 
    \end{align*}
    where the last inequality is due to the following deterministic inequality: if $q\le 2/n$, then
    \begin{align*}
        |p-q|^2 \wedge \frac{p+q}{n} \lesssim \frac{1}{n}\left(n|p-q|^2 \wedge |p-q|\right). 
    \end{align*}
    The proof of the above deterministic inequality is based on two cases: 
    \begin{itemize}
        \item if $p\le 3/n$, then $|p-q|^2 \lesssim |p-q|^2 \wedge (|p-q|/n)$; 
        \item if $p > 3/n$, then $p+q\lesssim n|p-q|^2 \wedge |p-q|$.
    \end{itemize}
    
    Consequently, we have the first statement. For the second statement, similar to the proof of Proposition \ref{prop:S_1/2 E sep} we have
    \begin{align*}
        E_0 \ge \min_{\epsilon_0' \in [0,\epsilon_0]} \left(\frac{n(\epsilon_0')^2}{k} + \epsilon_0 - \epsilon_0'\right) \gtrsim \epsilon_0^2\left(\frac{1}{\epsilon_0}\wedge \frac{n}{k}\right) \asymp \frac{n\epsilon_0^2}{k}. 
    \end{align*}
    \item Case II: $j\in [\ell]$. By Proposition \ref{prop:S_> + S_geq E lower} and Lemma \ref{lemma:localization} we have
    \begin{align*}
        \E[\sep(\hat S_>(D_j)) - \sep(\hat S_<(D_j))] \gtrsim \sum_{i\in D_j} n(p_i-q_i)^2 \wedge |p_i-q_i| = E_j. 
    \end{align*}
    Similar to Case I, we have 
     \begin{align*}
        \sigma^2(\sep(\hat S_>(D_j)))\vee \sigma^2(\sep(\hat S_<(D_j))) \lesssim \sum_{i\in D_j} |p_i-q_i|^2 \wedge \frac{p_i+q_i}{n} \lesssim \frac{E_j}{n}, 
    \end{align*}
    and the first statement follows. 
    
    For the second statement, note that $|D_j|\le t/2^{j-1}=\cal O(\sqrt{kt/2^j})$ by Lemma \ref{lemma:localization}. Therefore, 
    \begin{align*}
    E_j \ge \min_{\epsilon_j' \in [0,\epsilon_j]} \left(\frac{n(\epsilon_j')^2}{|D_j|} + \epsilon_j - \epsilon_j'\right) \gtrsim \epsilon_j^2\left(\frac{1}{\epsilon_j}\wedge \frac{n}{\sqrt{kt/2^j}}\right) \asymp \frac{n\epsilon_j^2}{\sqrt{kt/2^j}}. 
    \end{align*}
    \item Case III: $j=\ell+1$. By Proposition \ref{prop:S_> + S_geq E lower} and Lemma \ref{lemma:localization}, we have
    \begin{align*}
        \E[\sep(\hat S_>(D_{\ell+1})) - \sep(\hat S_<(D_{\ell+1}))] \gtrsim \sum_{i\in D_{\ell+1}} \frac{n(p_i-q_i)^2}{\sqrt{nq_i}} \wedge |p_i-q_i| = E_{\ell+1}. 
    \end{align*}
    The first statement then follows from Lemma \ref{lem:scheffe separation variance}. The second statement then follows from
    \begin{align*}
        E_{\ell+1} \ge \min_{\epsilon_{\ell+1}' \in [0,\epsilon_{\ell+1}]} \left(\frac{n(\epsilon_{\ell+1}')^2}{\sqrt{nk}} + \epsilon_{\ell+1} - \epsilon_{\ell+1}'\right) \gtrsim \epsilon_{\ell+1}^2\left(\frac{1}{\epsilon_{\ell+1}}\wedge \sqrt{\frac{n}{k}}\right) \asymp \sqrt{\frac nk} \epsilon^2_{\ell+1}. 
    \end{align*}
\end{enumerate}
The proof is complete. 
\end{proof}

Based on Lemma \ref{lemma:separating_sets}, we are about to describe how we choose from the sets $\{\hat S_>(D_j), \hat S_<(D_j)\}_{j=0}^{\ell+1}$. Similar to the proof of Corollary \ref{cor:sep+tau for P_Db}, using the held out samples $(\tilde X, \tilde Y)$, we can obtain the empirical estimates $\widehat{\sep}(\hat S_s(D_j))$ for all $s\in \{>,<\}$ and $j\in \{0,1,\cdots,\ell+1\}$. With a small absolute constant $c_1>0$ and $\tilde E_j$ as defined in Lemma \ref{lemma:separating_sets}, the selection rule is as follows: if there is some $s\in \{>,<\}$ and $j\in \{0,1,\cdots,\ell+1\}$ such that
\begin{align*}
|\widehat{\sep}(\hat S_s(D_j))| \ge c_1\tilde E_j(\epsilon/(\ell+2)), 
\end{align*}
then choose $\hat S = \hat S_s(D_j)$; if there is no such pair $(s,j)$, choose an arbitrary $\hat S$. 

We first show that with probability at least $1-\cal O(k\delta)$, such a pair $(s,j)$ exists. Since $\|p-q\|_1\ge \epsilon$, there must exist some $j\in \{0,1,\cdots,\ell+1\}$ such that $\epsilon_j \ge \epsilon/(\ell+2)$. As long as
\begin{align*}
    n \ge c_2 n_\GoF(\epsilon/\ell,\delta,\cal P_{\sf D})
\end{align*}
for a large constant $c_2>0$, one can check via Lemma \ref{lemma:separating_sets} that $|\sep(\hat S_>(D_j)) | \vee |\sep(\hat S_<(D_j)) | \ge 4c_1\tilde E_j(\epsilon/(\ell+2))$ for a small enough universal constant $c_1>0$. Assuming that $n\gtrsim\log(1/\delta)$, we have $\tau(\hat S_>(D_j))\vee \tau(\hat S_<(D_j)) = \cal O(n2^j/t)$ with probability $1-\cal O(\delta)$ due to Poisson concentration (Lemma \ref{lem:poisson tail}). On this event, it holds with probability at least $1-\delta$ that (cf. Lemma \ref{lem:bernstein master})
\begin{align*}
|\widehat{\sep}(\hat S_>(D_j)) | \vee |\widehat{\sep}(\hat S_<(D_j)) | 
&\ge 2c_1\tilde E_j(\epsilon/(\ell+2))- \cal O\left(\sqrt{\frac{2^j\log(1/\delta)}{t}} + \frac{\log(1/\delta)}{n}\right), 
\end{align*}
which is at least $c_1\tilde E_j(\epsilon/(\ell+2))$ as long as  
\begin{align}\label{eq:sample_complexity}
    n \sqrt{\frac tk} \asymp n \sqrt{1\land \frac{m}{\log(1/\delta)k}} \ge c_3 n_\GoF(\epsilon/\ell,\delta,\cal P_{\sf D})
\end{align}
for some large $c_3>0$. Therefore, provided \eqref{eq:sample_complexity} holds, the desired pair $(j,s)$ exists with probability $1-\cal O(k\delta)$ due to a union bound. 

Conversely, if $|\widehat{\sep}(\hat S_s(D_j))| \ge c_1\tilde E_j(\epsilon/(\ell+2))$ holds for some $(s,j)$, the true separation $|{\sep}(\hat S_s(D_j))|$ is at least of the same order as well. Indeed, Lemma \ref{lem:bernstein master} shows that
\begin{align*}
|{\sep}(\hat S_s(D_j))| \ge \frac{1}{2}|\widehat{\sep}(\hat S_s(D_j))| - \cal O\left(\sqrt{\frac{2^j\log(1/\delta)}{t}} + \frac{\log(1/\delta)}{n}\right), 
\end{align*}
which is at least $c_1E_j(\epsilon/(\ell+2))/4$ as long as \eqref{eq:sample_complexity} holds. This completes the proof.

\subsection{Proof of Proposition \ref{prop:finding gauss sepset}}
The statement of Proposition \ref{prop:finding gauss sepset} follows immediately from the following lemma.

\begin{lemma}\label{lem:gaussian separation}
Let $\operatorname{sep}(\hat S) \eqdef \mu_{\theta^X}(\hat S) - \mu_{\theta^Y}(\hat S)$. There exist universal constants $c_i > 0,i\in[5]$ such that for $J = \lfloor c_1\epsilon^{-1/s}\rfloor$ we have
    \begin{align*}
    \E[ \operatorname{sep}(\hat S)] + \frac{c_2}{\sqrt n} &\geq  \frac{c_3\epsilon^2}{\epsilon + \sqrt{J/n}} \\
    \P\left(\left|\sep(\hat S) - \E\sep(\hat S)\right| \geq t+\frac{c_4}{\sqrt n}\right) &\leq 2\exp(-c_5 n t^2) 
    \end{align*}
    for all $t\geq0$. 
\end{lemma}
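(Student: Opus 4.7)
The plan is to exploit the half-space form of $\hat S$ together with Lemma~\ref{lem:gaussian cdf expectation} to integrate out part of the training noise, producing a scalar quantity that Lemma~\ref{lem:gaussian lipschitz concentration} can handle. Write $\delta := \theta^X - \theta^Y$, $\rho^2 := \|\delta_{1:J}\|_2^2$, and decompose the training noise $(\eta^X, \eta^Y) := (\hat\theta^X - \theta^X, \hat\theta^Y - \theta^Y)$ into the independent components $A := \eta^X - \eta^Y$ and $B := \eta^X + \eta^Y$, each $\cal N(0, 2I_J/n)$. Since $\hat\delta_{1:J} = \delta_{1:J} + A$, the classifier is a half-space $\hat S = \{Z : \langle u(A), Z_{1:J}\rangle \geq c(A,B)\}$ with $u := \hat\delta_{1:J}/\|\hat\delta_{1:J}\|$ depending on $A$ only; writing $\mu(A) := \langle u, \delta_{1:J}\rangle$ and $\nu := \langle u, B\rangle$ (so $\nu \mid A \sim \cal N(0, 2/n)$), a direct computation gives
\begin{equation*}
\sep(\hat S) = \Phi((\mu - \nu)/2) - \Phi(-(\mu + \nu)/2).
\end{equation*}
Averaging over $\nu$ via Lemma~\ref{lem:gaussian cdf expectation} (with $a = -1/\sqrt{2n}$) yields
\begin{equation*}
\bar\sep(A) := \E[\sep(\hat S) \mid A] = 2\Phi\!\left(\frac{\mu(A)}{2 r_n}\right) - 1, \qquad r_n := \sqrt{1 + 1/(2n)}.
\end{equation*}

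For the expectation bound, choose $c_1$ appropriately so that the Sobolev tail $\sum_{j>J} \delta_j^2 \lesssim J^{-2s}$ is at most half of $\|\delta\|_2^2 \gtrsim \epsilon^2$ (the latter from $\TV(\mu_{\theta^X}, \mu_{\theta^Y}) = 2\Phi(\|\delta\|_2/2) - 1 \geq \epsilon$), giving $\rho \gtrsim \epsilon$. Decompose $A = A_\parallel u_0 + A_\perp$ with $u_0 := \delta_{1:J}/\rho$, $s := \rho + A_\parallel \sim \cal N(\rho, 2/n)$, $t^2 := \|A_\perp\|^2 \sim (2/n)\chi^2_{J-1}$, so that $\mu(A) = \rho s/\sqrt{s^2 + t^2}$. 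On the good event $\cal E := \{s \geq \rho/2\} \cap \{t^2 \leq C_0 J/n\}$ one checks $\mu(A) \gtrsim \rho^2/(\rho + \sqrt{J/n})$ and hence $\bar\sep(A) \gtrsim \rho^2/(\rho + \sqrt{J/n}) \wedge 1$, while Gaussian and chi-squared tail bounds give $\P(\cal E^c) \leq 2\exp(-n\rho^2/16) + \exp(-cJ)$. This handles the strong-signal regime. In the borderline regime $\epsilon \lesssim 1/\sqrt n$, the target RHS is itself $O(1/\sqrt n)$ and the bound follows from the unconditional inequality $\E\sep \geq 0$, which I would prove by symmetry: conditional on $t$, $\bar\sep(A)$ is an odd and non-decreasing function $h(y)$ of $y := s$, so for $y \sim \cal N(\rho, 2/n)$ with $\rho \geq 0$, pairing $y = \rho \pm z$ and using $h(\rho + z) + h(\rho - z) \geq 0$ (a one-line case split using monotonicity and oddness) gives $\E_y h(y) \geq 0$.

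For the concentration bound I would split
\begin{equation*}
\sep(\hat S) - \E\sep(\hat S) = \bigl(\sep(\hat S) - \bar\sep(A)\bigr) + \bigl(\bar\sep(A) - \E\bar\sep(A)\bigr)
\end{equation*}
and apply Lemma~\ref{lem:gaussian lipschitz concentration} to each piece. Differentiating $\Phi((\mu-\nu)/2) - \Phi(-(\mu+\nu)/2)$ in $\nu$ shows the derivative is bounded by $1/(2\sqrt{2\pi})$, and since $\nu = \langle u, B\rangle$ is $1$-Lipschitz in $B \sim \cal N(0, 2I_J/n)$, the first term is sub-Gaussian with variance proxy $O(1/n)$ conditionally on $A$ and hence unconditionally. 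For the second term, a direct computation yields $\|\nabla_A \mu(A)\| = \rho t/(s^2 + t^2) \leq \rho/(2s)$ by AM-GM, bounded by $1$ on $\cal E$; composing with the $1$-Lipschitz map $x \mapsto 2\Phi(x/(2r_n)) - 1$ shows $\bar\sep$ is $1$-Lipschitz on $\cal E$. A clipped Kirszbraun extension $\tilde\sep : \mathbb R^J \to [-1,1]$ of $\bar\sep|_{\cal E}$ is globally $1$-Lipschitz, and the trivial bound $|\bar\sep - \tilde\sep| \leq 2 \one\{\cal E^c\}$ gives $|\E\bar\sep - \E\tilde\sep| \leq 2\P(\cal E^c)$, so Lemma~\ref{lem:gaussian lipschitz concentration} applied to $\tilde\sep(A)$ yields the desired sub-Gaussian tail, with $c_4/\sqrt n$ absorbing both the mean shift and the exceptional event.

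The main obstacle is the regime $n\rho^2 \asymp 1$ where $\P(\cal E^c)$ is only a constant, so the good-event/Lipschitz-extension strategy alone does not yield $O(1/\sqrt n)$ accuracy. For the first inequality this is bypassed by $\E\sep \geq 0$ (the RHS being then $O(1/\sqrt n)$, absorbed by $c_2/\sqrt n$); for the second, this same regime corresponds to concentration at scale $t \gtrsim 1/\sqrt n$, which is exactly where the slack $c_4/\sqrt n$ renders the bad-event contribution harmless.
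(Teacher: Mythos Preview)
Your setup matches the paper's: decompose the training noise into independent $A$ and $B$, integrate out $B$ via Lemma~\ref{lem:gaussian cdf expectation} to get $\bar\sep(A)=2\Phi(\mu(A)/(2r_n))-1$ with $W:=\mu(A)=\langle\delta_{1:J},(\delta_{1:J}+A)/\|\delta_{1:J}+A\|\rangle$, then split $\sep-\E\sep=(\sep-\bar\sep)+(\bar\sep-\E\bar\sep)$ and handle the first piece by Lipschitzness in $B$. The divergence is in how $W$ is analyzed. The paper avoids good events entirely: for the concentration it shows $W$ is $(2n)^{-1}$-sub-Gaussian via the one-line decomposition (with $\tau=\delta_{1:J}$, $\sigma^2=2/n$, $Q$ standard Gaussian)
\[
W=\Big\langle\tfrac{\tau}{\E\|\tau+\sigma Q\|},\tfrac{\tau+\sigma Q}{\|\tau+\sigma Q\|}\Big\rangle\big(\|\tau+\sigma Q\|-\E\|\tau+\sigma Q\|\big)+\sigma\Big\langle\tfrac{\tau}{\E\|\tau+\sigma Q\|},Q\Big\rangle,
\]
where the first factor is bounded by $1$ a.s.\ (Jensen) and the other two pieces are each $\sigma^2$-sub-Gaussian; for the expectation it uses Stein's identity to write $\E W=\sigma^{-1}\E[\langle\tau,Q\rangle\,\|\tau+\sigma Q\|]$ and a Cauchy--Schwarz step to extract $\E W\gtrsim\|\tau\|^2/(\|\tau\|+\sigma\sqrt J)$ uniformly over all parameter regimes. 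No Kirszbraun, no bad-event bookkeeping, no case split on $n\rho^2$.

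Your good-event route is plausible, but the ``main obstacle'' you flag is real and the sketch you give does not yet close it. For the concentration the missing observations are: (i) the Lipschitz bound $\|\nabla_A\mu\|=\rho t/(s^2+t^2)\le 1$ requires only $\{s\ge\rho/2\}$, not the $t^2$ part of $\cal E$; (ii) since $|\mu|\le\rho$ always, clipping $\tilde\sep$ to $[-C\rho,C\rho]$ gives $|\bar\sep-\tilde\sep|\le 2C\rho\,\one\{s<\rho/2\}$, so the mean shift is at most $2C\sup_{x>0}xe^{-nx^2/16}=O(1/\sqrt n)$ and the bad-event tail $e^{-n\rho^2/16}$ is dominated by $e^{-cnt^2}$ on the only nontrivial range $t\lesssim\rho$. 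For the expectation your symmetry proof of $\E\sep\ge 0$ is correct and handles the regime where the target is $O(1/\sqrt n)$, but in the complementary regime the good-event bound $c'M\,\P(\cal E)-C\rho\,\P(\cal E^c)$ still carries the error $C\rho\cdot\P(t^2>C_0J/n)$, which is not obviously $o(M)$ when $\rho\ll\sqrt{J/n}$; pushing this through needs the exponential $\chi^2$-tail in $J$ together with $J\asymp\epsilon^{-1/s}$ and some further case analysis. The paper's Stein-based computation sidesteps all of this.
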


\begin{proof}
    Write $\|\cdot\|,\left\langle\cdot,\cdot\right\rangle$ for the $\ell^2$ norm/inner product restricted to the first $J$ coordinates. Notice that given $\hat\theta^X$ and $\hat\theta^Y$, $T(\theta)$ is simply a Gaussian random variable with $\E T(\theta) = \|\hat\theta^Y-\theta\|^2-\|\hat\theta^X-\theta\|^2$ and $\var(T) = 4\|\hat\theta^X-\hat\theta^Y\|^2$. Define the vectors
\begin{align*}
    U &= \{\hat\theta^X_j-\hat\theta^Y_j\}_{j=1}^J \\
    V &= \{\hat\theta^X_j+\hat\theta^Y_j\}_{j=1}^J. 
\end{align*}
Note that they are independent, jointly Gaussain with variance $2I_J/n$ and means equal to the first $J$ coordinates of $\theta^X\mp\theta^Y$ respectively. Let $\Phi$ be the cdf of the standard Gaussian and $\phi = \Phi'$ be its density. The separation can be written as
\begin{align*}
    \operatorname{sep}(\hat S) &= f(\theta^X) - f(\theta^Y), 
\end{align*}
where
\begin{equation}
    f(\theta) = \Phi\left(\frac{\|\hat{\theta}^Y-\theta\|^2-\|\hat\theta^X-\theta\|^2}{2\|\hat\theta^X-\hat\theta^Y\|}\right) = \Phi\left(-\frac12 \left\langle V ,\frac{U}{\|U\|}\right\rangle + \left\langle \theta, \frac{U}{\|U\|}\right\rangle\right). 
\end{equation}
We focus on proving the desired tail bound first. To make the dependence on the variables explicit, write $g(U,V) = f(\theta^X)-f(\theta^Y)$ for the separation. Given $U$, $V$ is a $\cal N(\theta^X+\theta^Y, 2I_j/n)$ random variable. Differentiating $g$ and using that $\phi$ is $1/\sqrt{2\pi e}$-Lipschitz we have
\begin{align*}
    \|\nabla_V g(U,V)\| &= \Big\|-\frac12 \frac{U}{\|U\|} \Bigg(\phi\left(-\frac12\left\langle V,\frac{U}{\|U\|}\right\rangle + \left\langle \theta^X, \frac{U}{\|U\|}\right\rangle\right) \\&\qquad- \phi\left(-\frac12\left\langle V,\frac{U}{\|U\|}\right\rangle + \left\langle \theta^Y, \frac{U}{\|U\|}\right\rangle\right)\Bigg)\Big\| \\
    &\leq \frac{1}{\sqrt{8\pi e}} \left|\left\langle \theta^X-\theta^Y, \frac{U}{\|U\|}\right\rangle\right| \\
    &\leq \frac{C_\sf{G}}{\sqrt{8\pi e}}. 
\end{align*}
By Lipschitz concentration of the Gaussian distribution (Lemma \ref{lem:gaussian lipschitz concentration}) we conclude that $g-\E[g|U]$ is sub-Gaussian with variance proxy $C_\sf{G}^2/(4\pi en)$. Next we study the concentration of $\E[g|U]$. To this end, note that 
\begin{equation*}
    \left.-\frac12\left\langle V, \frac{U}{\|U\|}\right\rangle + \left\langle \theta, \frac{U}{\|U\|}\right\rangle\right| U \sim \cal N\left(\left\langle \theta-\frac12(\theta^X+\theta^Y), \frac{U}{\|U\|}\right\rangle, \frac{1}{2n}\right).
\end{equation*}
Thus, using the independence of $U$ and $V$ and Lemma \ref{lem:gaussian cdf expectation} we obtain
\begin{align*}
    \E\left[g(U,V)|U\right] &= \E\left[f(\theta^X)-f(\theta^Y)|U\right] \\
    &= \Phi\left(\frac{W}{\sqrt{4+2/n}}\right) - \Phi\left(-\frac{W}{\sqrt{4+2/n}}\right), 
\end{align*}
where we write $W \eqdef \left\langle \theta^X-\theta^Y, \frac{U}{\|U\|}\right\rangle$. Let $\tilde\Phi = \Phi(\cdot/\sqrt{4+2/n})$ to ease notation. Once again by Lipschitzness of $\Phi$, we obtain for every $t\geq0$ that
\begin{align*}
    \P\left(\left|\tilde\Phi(W)-\E\tilde\Phi(W)\right| \geq t\right) &\leq \P\left(\left|\tilde\Phi(W)-\tilde\Phi(\E W)\right| \geq t - \|\tilde\Phi\|_\sf{Lip}\sqrt{\var(W)}\right) \\ 
    &\leq \P\left(\left|W-\E W\right| \geq \frac{t}{\|\tilde\Phi\|_\sf{Lip}} - \sqrt{\var(W)}\right), 
\end{align*}
and an analogous inequality can be obtained for $-W$. The last ingredient is showing that $W$ concentrates well.  
\begin{lemma}\label{lem:W subG}
    $W$ is sub-Gaussian with variance proxy $1/(2n)$. 
\end{lemma}
\begin{proof}[Proof of Lemma \ref{lem:W subG}]
To simplify notation, let $\tau = \theta^X-\theta^Y$, $\sigma^2 = 1/(2n)$ and let $Q$ be a zero-mean identity-covariance Gaussian random vector so that
\begin{equation*}
    W \stackrel{d}{=} \left\langle\tau, \frac{\tau + \sigma Q}{\|\tau + \sigma Q\|}\right\rangle. 
\end{equation*}
We have
\begin{align*}
    \left\langle\tau, \frac{\tau + \sigma Q}{\|\tau + \sigma Q\|}\right\rangle  &= \underbrace{\left\langle \frac{\tau}{\E\|\tau + \sigma Q\|}, \frac{\tau + \sigma Q}{\|\tau + \sigma Q\|} \right\rangle}_{|\cdot|\leq1 \text{ almost surely}} \underbrace{(\|\tau+\sigma Q\|-\E\|\tau+\sigma Q\|)}_{\sigma^2\text{ sub-Gaussian}} + \underbrace{\sigma \left\langle\frac{\tau}{\E\|\tau+\sigma Q\|}, Q\right\rangle}_{\sigma^2\text{ sub-Gaussian}}, 
\end{align*}
where we use that $\E \|\tau + \sigma Q\| \geq \|\tau\|$ by Jensen's inequality, and apply Lemma \ref{lem:gaussian lipschitz concentration} twice. Overall, this implies that $W$ is sub-Gaussian with variance proxy $\sigma^2=1/(2n)$ as required. 
\end{proof}

Recall that we have decomposed the separation as follows:
\begin{equation*}
    \sep(\hat S) - \E\sep(\hat S) = \underbrace{g - \E[g|U]}_{\cal O(1/n) \text{ sub-Gaussian}} + \underbrace{\tilde\Phi(W)-\tilde\Phi(-W) - \E[\tilde\Phi(W)-\tilde\Phi(-W)]}_{\cal O(1/n) \text{ sub-Gaussian tails beyond } \cal O(1/\sqrt{n})}, 
\end{equation*}
which completes the proof. 

Let us turn to calculating the expected separation. We have already seen that
\begin{equation*}
    \E\operatorname{sep}(\hat S) = \E\left[\tilde\Phi(W) - \tilde\Phi(-W)\right]. 
\end{equation*}
Again by Lipschitzness we have $|\E\tilde\Phi(W)-\tilde\Phi(\E W)| \leq \|\tilde\Phi\|_\sf{Lip} \E|W-\E W| \lesssim 1/\sqrt n$ by Lemma \ref{lem:W subG}. Thus, we see that
\begin{align*}
    \E\operatorname{sep}(\hat S) + \Omega\left(\frac{1}{\sqrt n}\right) \geq \tilde\Phi(\E W) - \tilde\Phi(-\E W), 
\end{align*}
where the implied constant is universal. To simplify notation, let $\tau = \theta^X-\theta^Y$, $\sigma^2 = 1/(2n)$ and let $Q$ be a standard normal random variable. Looking at $\E W$ we have
\begin{align*}
    \E W = \E \left\langle \tau, \frac{\tau + \sigma Q}{\|\tau + \sigma Q\|}\right\rangle = \frac1\sigma \E \left\langle\tau, \nabla_Q\|\tau+\sigma Q\|\right\rangle = \frac1\sigma\E\left[\left\langle\tau, Q\right\rangle \|\tau + \sigma Q\|\right]
\end{align*}
by Stein's identity. By the rotational invariance of the Gaussian distribution, the above is equal to 
\begin{align*}
    \E W &= \frac{\|\tau\|}{\sigma} \E \left[Q_1\sqrt{(\|\tau\| + \sigma Q_1)^2 + \dots + \sigma^2 Q_J^2}\right] \\
    &= \frac{\|\tau\|}{\sigma} \E\left[ Q_1 \sqrt{(\|\tau\| + \sigma Q_1)^2 + \dots + \sigma^2 Q_J^2} - Q_1 \sqrt{\|\tau\|^2 + \sigma^2 Q_1^2 + \dots + \sigma^2 Q_J^2}\right] \\
    &= 2\|\tau\|^2 \E\left[\frac{Q_1^2}{\sqrt{(\|\tau\| + \sigma Q_1)^2 + \dots + \sigma^2 Q_J^2} + \sqrt{\|\tau\|^2 + \sigma^2 Q_1^2 + \dots + \sigma^2 Q_J^2}}\right]. 
\end{align*}
By the Cauchy-Schwarz inequality we have
\begin{align*}
    (\E |Q_1|)^2 \lesssim \E\left[\frac{Q_1^2}{\sqrt{(\|\tau\| + \sigma Q_1)^2 + \dots + \sigma^2 Q_J^2} + \sqrt{\|\tau\|^2 + \sigma^2 Q_1^2 + \dots + \sigma^2 Q_J^2}}\right] \times (\|\tau\| + \sigma\sqrt J). 
\end{align*}
Plugging into our expression for $\E W$ this yields
\begin{equation*}
    \E W \gtrsim \frac{\|\tau\|^2}{\|\tau\| + \sigma\sqrt J}. 
\end{equation*}
To clarify notation, let us now write $\|\cdot\|_J$ for the $\ell^2$-norm restricted to the first $J$ coordinates. Taking $J = c \epsilon^{-1/s}$ it holds that
\begin{align*}
    \|\tau\|_J^2 = \|\tau\|^2 - \sum_{j>J} \tau_j^2 \geq \|\tau\|^2 - J^{-2s} \sum_{j > J} \tau_j^2 j^{2s} = \|\tau\|^2 - c^{-2s} \epsilon^2 \|\tau\|_s^2. 
\end{align*}
Since $\|\tau\|_s\lesssim1$ and $\|\tau\|\geq \epsilon$ by assumption, we see that for large enough universal constant $c$ we have $\|\tau\|_J\geq\epsilon/2$. Since the map $x\mapsto x^2/(x+c)$ is increasing for $x,c > 0$ it follows that
\begin{equation*}
    \E W \gtrsim \frac{\epsilon^2}{\epsilon + \sqrt{J/n}}
\end{equation*}
for a universal implied constant. By the inequality $\Phi(x) - \Phi(-x) \geq x/2$ for $x \in [0,1]$ we obtain
\begin{align*}
    \tilde\Phi(\E W) - \tilde\Phi(-\E W) \geq 1\land \E W/2, 
\end{align*}
which completes the proof. 
\end{proof}

\section{Lower bounds}\label{section:lower bds}

Recall the notation of Section \ref{sec:fundamental problems}. Given two hypotheses $H_0, H_1$, our aim is to lower bound the minimum achievable worst-case error. To this end, we use the following standard fact:
\begin{align}\label{eqn:lower master}
    \min\limits_{\psi}\max\limits_{i=0,1} \sup_{P \in H_i} \P_{S\sim P}(\psi(S)\neq i) &\geq  \frac12\big(1-\TV(\E P_0, \E P_1)\big), 
\end{align}
where $P_0,P_1$ are any random probability distributions with $\P(P_i \in H_i)=1$ and $\E P_i$ denote the corresponding mixtures and $\TV$ denotes the total variation distance. Hence, deriving a lower bound of order $\delta$ on the minimax error reduces to the problem of finding mixtures $\E P_i$ such that $1-\TV(\E P_0,\E P_1) =\Omega( \delta)$. To this end we utilize standard inequalities between divergences. 

\begin{lemma}[\cite{yuryyihongbook}]\label{lem:1-TV>chi}
    For any probability measures $\P,\Q$ the inequalities
    \begin{align*}
        1-\TV(\P,\Q) &\geq \frac12e^{-\KL(\P \| \Q)} \geq \frac{1}{2(1+\chi^2(\P \| \Q))}
    \end{align*}
    hold, where $\KL$ and $\chi^2$ denote the Kullback-Leibler and $\chi^2$ divergence respectively. 
\end{lemma}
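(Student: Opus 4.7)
My plan is to prove the two inequalities in Lemma~\ref{lem:1-TV>chi} separately, both of which are classical information-theoretic identities that I would derive in a self-contained way from Jensen's inequality and Cauchy--Schwarz.

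For the second inequality $e^{-\KL(\P\|\Q)} \ge \tfrac{1}{1+\chi^2(\P\|\Q)}$, I would start by rewriting $1+\chi^2(\P\|\Q) = \int (d\P/d\Q)^2\, d\Q = \E_{\P}[d\P/d\Q]$. Since $\log$ is concave, Jensen's inequality gives
\begin{equation*}
\log(1+\chi^2(\P\|\Q)) \;=\; \log \E_{\P}\!\left[\frac{d\P}{d\Q}\right] \;\ge\; \E_{\P}\!\left[\log \frac{d\P}{d\Q}\right] \;=\; \KL(\P\|\Q),
\end{equation*}
which rearranges immediately to the claimed bound.

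For the first inequality $1-\TV(\P,\Q) \ge \tfrac{1}{2}e^{-\KL(\P\|\Q)}$, the plan is to pass through the Hellinger affinity $\rho(\P,\Q) \eqdef \int \sqrt{pq}\, d\mu$, where $p,q$ are densities of $\P,\Q$ with respect to a common dominating measure. The key step is the Bretagnolle--Huber inequality $\TV(\P,\Q) \le \sqrt{1-\rho(\P,\Q)^2}$, which I would prove as follows: letting $f=d\P/d\Q$, $m=\min(f,1)$ and $M=\max(f,1)$, one has $mM=f$ and $\int m\,d\Q + \int M\,d\Q = 2$; applying Cauchy--Schwarz yields $(\int m\,d\Q)(\int M\,d\Q) \ge (\int \sqrt{f}\,d\Q)^2 = \rho^2$, and setting $a=\int m\, d\Q = 1-\TV$ gives the quadratic inequality $a(2-a)\ge \rho^2$, which rearranges to $1-\TV \le \sqrt{1-\rho^2}$. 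A second application of Jensen's inequality to $\log\rho = \log \E_{\P}[\sqrt{q/p}] \ge -\tfrac{1}{2}\KL(\P\|\Q)$ yields $\rho^2 \ge e^{-\KL(\P\|\Q)}$, whence $\TV^2 \le 1 - e^{-\KL(\P\|\Q)}$. Finally, factoring $(1-\TV)(1+\TV) \ge e^{-\KL(\P\|\Q)}$ and using $1+\TV\le 2$ gives $1-\TV \ge \tfrac{1}{2}e^{-\KL(\P\|\Q)}$.

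There is no real obstacle here; the only mild subtlety is choosing the right direction in the Jensen step for the Hellinger affinity (i.e., noticing that $\rho$ is symmetric in $\P,\Q$, so one may write it as an expectation under whichever measure is convenient). Both bounds are tight up to the constant factor $1/2$, which is the price paid for the final step $1+\TV \le 2$.
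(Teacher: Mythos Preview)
The paper does not prove this lemma; it simply cites it as a standard result from \cite{yuryyihongbook}. Your self-contained argument via Bretagnolle--Huber and two applications of Jensen's inequality is correct and is indeed the classical route. One slip to fix: after deriving $a(2-a)\ge\rho^2$ with $a=1-\TV$, you write that this rearranges to ``$1-\TV\le\sqrt{1-\rho^2}$''; it should read $\TV\le\sqrt{1-\rho^2}$ (which is what you stated correctly a sentence earlier and use correctly a sentence later, so this is clearly just a typo).
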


Many of our lower bounds will follow from reduction to prior work. 

\subsection{Lower bounds for $\cal P_\sf{Db}$}\label{sec:P_Db lower}
In \cite{gerber2022likelihood} the authors gave the construction of distributions $p_{\eta,\epsilon}, p_0 \in \cal P_\sf{Db}(k, 2)$ (originally due to Paninski) for a mixing parameter $\eta$ such that $\TV(p_{\eta,\epsilon}, p_0) = \epsilon \asymp \sqrt{\KL(p_{\eta,\epsilon}, p_0)}$ for all $\eta$, where the implied constant is universal. They further showed that 
\begin{align}\label{eqn:P_Db TS lower chi}
    \chi^2(\E_\eta p_{\eta,\epsilon}^{\otimes n}, p_0^{\otimes n}) \leq \exp\left(c\frac{n^2\epsilon^4}{k}\right)-1
\end{align}
and 
\begin{equation}\label{eqn:P_Db LFHT lower chi}
    \chi^2\Big(\E_\eta\big[p_0^{\otimes n} \otimes p_{\epsilon,\eta}^{\otimes (n+m)}\big] \Big\| \E_\eta\big[ p_0^{\otimes n}\otimes p_{\epsilon,\eta}^{\otimes n}\otimes p_0^{\otimes m}\big]\Big) \leq \exp\left(c\frac{m(n+m)\epsilon^4}{k}\right)-1
\end{equation}
for a universal $c>0$. 
\begin{remark}
    More precisely, \eqref{eqn:P_Db LFHT lower chi} can be extracted from \cite{gerber2022likelihood} using the chain rule for $\chi^2$ (as opposed to $\KL$). 
\end{remark}

\subsubsection{Lower bound for $\TS$ and $\GoF$}
Take $P_0 = p_0^{\otimes 2n}$ and $P_1=p_{\epsilon, \eta_0}^{\otimes n}\otimes p_0^{\otimes n}$ in \eqref{eqn:lower master} for a fixed $\eta_0$. Then, by Lemma \ref{lem:1-TV>chi} and the data-processing inequality we have
\begin{align*}
    1-\TV(\E P_0, \E P_1) \geq \frac12\exp(-n\KL(p_{\epsilon,\eta} \| p_0)) \geq \frac12\exp(-cn\epsilon^2) \stackrel{!}{=} \Omega(\delta)
\end{align*}
for a universal $c>0$. This shows that $\GoF,\TS$ are impossible at total error $\delta$ unless $n \gtrsim \log(1/\delta)/\epsilon^2$, which gives the first term of our lower bound.  

For the second term, consider the random measures $P_0 = p_0^{\otimes 2n}$ and $P_1 = p_0^{\otimes n}\otimes p_{\epsilon,\eta}^{\otimes n}$ in \eqref{eqn:lower master}. Then 
using \eqref{eqn:P_Db TS lower chi} and Lemma \ref{lem:1-TV>chi} we have
\begin{align*}
    1-\TV(\E P_1,\E P_0) &\geq \frac12\frac{1}{1+\chi^2(\E P_1 \|\E P_0)} \\
    &\geq \frac12 \exp\left(-c\frac{n^2\epsilon^4}{k}\right) \stackrel{!}{=} \Omega(\delta). 
\end{align*}
Therefore, $\TS$ is impossible unless $n\gtrsim \sqrt{k\log(1/\delta)}/\epsilon^2$, which yields the second term of our lower bound. 

\subsubsection{Lower bound for $\LF$}
The necessity of $m\gtrsim\log(1/\delta)/\epsilon^2$ and $n\gtrsim \sqrt{k\log(1/\delta)}/\epsilon^2$ follows as for $\TS$ above. Taking $P_0 = p_0^{\otimes n} \otimes p_{\epsilon,\eta}^{\otimes n}\otimes p_0^{\otimes m}$ and $P_1 = p_0^{\otimes n} \otimes p_{\epsilon,\eta}^{\otimes (n+m)}$ in \eqref{eqn:lower master}, using \eqref{eqn:P_Db LFHT lower chi} and Lemma \ref{lem:1-TV>chi} we obtain the inequality
\begin{align*}
    1-\TV(\E P_0, \E P_1) &\geq \frac12 \frac{1}{1+\chi^2(\E P_1 \| \E P_0)} \\&\geq \frac12 \exp\left(-c\frac{m(m+n)\epsilon^4}{k}\right) \stackrel{!}{=} \Omega(\delta). 
\end{align*}
Therefore, $\LF$ is impossible with error $\cal O(\delta)$ unless $mn\gtrsim k\log(1/\delta)/\epsilon^4$ (note that the $m^2$-term is never active), which completes the lower bound proof.

\subsection{Lower bounds for $\cal P_\sf{H}$}
We don't provide the details because they are entirely analogous to Section \ref{sec:P_Db lower} and rely on classical constructions that can be found in \cite{gerber2022likelihood}. 

\subsection{Lower bounds for $\cal P_\sf{G}$} 
Given a vector $\eta \in \{\pm 1\}^\N$ define the measure
\begin{align*}
    \P_\eta = \bigotimes\limits_{j=1}^\infty \begin{cases}\begin{rcases} \cal N(\eta_j c_1\epsilon^{\frac{2s+1}{2s}}, 1) &\text{if  } 1 \leq j \leq c_2\epsilon^{-1/s}, \\ \cal N(0, 1) &\text{otherwise.} \end{rcases}\end{cases}
\end{align*}
Let $\eta_1,\eta_2,\dots$ be iid uniform signs in $\{\pm1\}$, and $\gamma_\eta$ be the mean vector of $\P_\eta$. Writing $\|\cdot\|_s$ for the Sobolev-norm of smoothness $s$ and $\|\cdot\|$ for the Euclidean norm, we see that for any $\eta$
\begin{align*}
    \|\gamma_\eta\|_s^2 &= \sum_{j=1}^\infty j^{2s} \gamma_{\eta j}^2 = \sum_{j=1}^{c_2\epsilon^{-1/s}} j^{2s} c_1^2 \epsilon^{\frac{2s+1}{s}} \leq c_1^2 \epsilon^{\frac{2s+1}{s}} \left(2c_2\epsilon^{-1/s}\right)^{2s+1} \asymp c_1^2 c_2^{2s+1},\\
    \|\gamma_\eta\|^2 &= \sum_{j=1}^\infty \gamma_{\eta j}^2 =  c_1^2 \epsilon^{\frac{2s+1}{s}} c_2\epsilon^{-1/s} \asymp c_1^2c_2\epsilon^2. 
\end{align*}

Then for any $C_\sf{G}>0$ we can choose $c_1,c_2$ independently of $\epsilon$ such that $\P_0, \P_\eta \in \cal P_\sf{G}(s, C_\sf{G})$ almost surely and $\|\gamma_\eta\|=10\epsilon$. Then for $\epsilon \leq 1/10$ we know that
\begin{equation*}
    \TV(\P_0,\P_\eta) = 2\Phi\left(\frac{\|\gamma_\eta\|}{2}\right) - 1 \geq \epsilon.
\end{equation*}
\subsubsection{Lower bounds for $\GoF$ and $\TS$}
Take $P_0 = \P_0^{\otimes 2n}$ and $P_1 = \P_\one^{\otimes n}\otimes \P_0^{\otimes n}$. Then 
\begin{align*}
    \KL(P_0 \| P_1) = n \KL(\P_0 \| \P_\one) = n c_2\epsilon^{-1/s} \frac{(c_1\epsilon^{\frac{2s+1}{2s}}-0)^2}{2} \asymp n\epsilon^2. 
\end{align*}
 Using Lemma \ref{lem:1-TV>chi} this gives us 
 \begin{align*}
     1-\TV(P_0,P_1) \gtrsim \exp(-\KL(P_0\|P_1)) = \exp(-\Theta(n\epsilon^2)) \stackrel{!}{=} \Omega(\delta). 
 \end{align*}
 By \eqref{eqn:lower master} we know then that $n \gtrsim \log(1/\delta)/\epsilon^2$ is necessary for both $\GoF$ and $\TS$ over $\cal P_\sf{G}$. 

To get the second term in the minimax sample complexity consider the construction $P_0 = \P_0^{\otimes 2n}$ and $P_1 = \P_\eta^{\otimes n}\otimes \P_0^{\otimes n}$ where $\eta$ is a uniformly random vector of signs. Writing $\omega = c_1\epsilon^{\frac{2s+1}{2s}}$ note that 
\begin{align*}
    \E\P_\eta^{\otimes n} &= \bigotimes_{j=1}^{c_2\epsilon^{-1/s}} \left(\frac12\cal N(\omega, 1)^{\otimes n} + \frac12\cal N(-\omega, 1)^{\otimes n}\right). 
\end{align*}
From here we can compute 
\begin{align*}
    \KL(P_0 \| \E P_1) &\asymp \epsilon^{-1/s} \KL\left(\cal N(0,1)^{\otimes n} \Big\| \frac12\cal N(\omega, 1)^{\otimes n} + \frac12\cal N(-\omega, 1)^{\otimes n}\right) \\
    &\asymp \epsilon^{-1/s} \left(\frac n2 \omega^2 - \E_{X \sim \cal N(0,I_n)} \log\cosh\left(\omega \sum_{i=1}^n X_i\right)\right) \\
    &\leq \frac{\epsilon^{-1/s}}{4} n^2 \omega^4 \asymp n^2\epsilon^{\frac{4s+1}{s}}, 
\end{align*}
where we used the inequality $\log\cosh(x) \geq \frac{x^2}{2} - \frac{x^4}{12}$ for all $x\in \R$. Thus, using Lemma \ref{lem:1-TV>chi},
\begin{align*}
    1-\TV(P_0 \| \E P_1) &\gtrsim \exp(-\KL(P_0\|\E\P_1)) \geq \exp(-\Theta(n^2\epsilon^{\frac{4s+1}{s}})) \stackrel{!}{=} \Omega(\delta). 
\end{align*}
 By \eqref{eqn:lower master} we know then that $n \gtrsim \sqrt{\log(1/\delta)}/\epsilon^{\frac{2s+1/2}{s}}$ is necessary for both $\GoF$ and $\TS$ over $\cal P_\sf{G}$.

\subsubsection{Lower bounds for $\LF$}
If $m\ge n$, from the $\GoF$ lower bound $n\gtrsim n_{\GoF}$ we conclude that $mn\gtrsim n_{\GoF}^2$, as desired. Therefore, throughout this section we assume that $m<n$. 

Let $P_0 = \P_\eta^{\otimes n}\otimes\P_0^{\otimes n}\otimes\P_\eta^m$ and $P_1=\P_\eta^{\otimes n}\otimes\P_0^{\otimes n}\otimes\P_0^{\otimes m}$, where $\eta$ is a uniformly random vector of signs. Once again, we define $\omega = c_1\epsilon^{\frac{2s+1}{2s}}$. We follow a proof similar to the cases $\cal P_\sf{Db}, \cal P_\sf{H}$ in \cite{gerber2022likelihood}. We use the data processing inequality, the chain rule and tensorization of $\chi^2$:
\begin{align*}
    \chi^2(\E P_0 \| \E P_1) &= \chi^2(\E \P_\eta^{\otimes (n+m)} \| \E \P_\eta^{\otimes n}\otimes \P_0^{\otimes m}) \\ 
    &= \left(\E_{X_1} \E_{\eta_1|X} \E_{\eta_1'|X_1} \int_{\R^m} \frac{\exp\left(-\frac12\sum_{j=1}^m \left\{(z_j-\eta_1\omega)^2 + (z_j-\eta_1'\omega)^2\right\}\right)}{(2\pi)^{m/2}\exp(-\frac12\sum_{j=1}^mz_j^2)} \D z\right)^{c_2\epsilon^{-1/s}}-1,
\end{align*}
where $X_1 \sim (\frac12\cal N(\omega,1/n)+\frac12 \cal N(-\omega, 1/n))$ and $\eta_1, \eta_1' | X_1$ are iid scalar signs from the posterior $p(\cdot | X_1)$, with joint distribution $p(\eta_1, X_1) = \phi(\sqrt{n}(X_1 - \eta_1\omega))/2$.  

The Gaussian integral above can be evaluated exactly and we obtain
\begin{align*}
    \chi^2(\E P_0\|\E P_1) &= (\E_{X_1,\eta_1,\eta_1'} \exp(\omega^2m\eta_1\eta_1'))^{c_2\epsilon^{-1/s}}-1. 
\end{align*}
Now, we can calculate
\begin{align*}
    \P(\eta_1=\eta_1') &= \E_{X_1} \frac{p(X_1|\eta_1=1)^2+p(X_1|\eta_1=-1)^2}{(p(X_1|\eta_1=1)+p(X_1|\eta_1=-1))^2} \\
    & = \frac{1}{2} + \frac{1}{4}\int \frac{(p(x_1|\eta_1=1) -p(x_1|\eta_1=-1))^2}{p(x_1|\eta_1=1)+p(x_1|\eta_1=-1)}\d x_1\\
    &\leq \frac12 + \frac{1}{16} \sum_{b\in \{\pm 1\}}\chi^2(\cal N(b\omega,1/n) \| \cal N(-b\omega,1/n)) \\
    &= \frac{1}{2} + \frac{\exp(4\omega^2n)-1}{8}.
\end{align*}
Together with $\P(\eta_1=\eta_1') \leq 1$, we have
\begin{align*}
    \E_{X_1,\eta_1,\eta_1'} \exp(\omega^2m\eta_1\eta_1') &\leq e^{-\omega^2m} + \left(\frac{1}{2}+\frac{1}{2}\wedge \frac{e^{4\omega^2n}-1}{8} \right)(e^{\omega^2m}-e^{-\omega^2m}) \\
    &= \cosh(\omega^2m) + t\sinh(\omega^2m), 
\end{align*}
with $t = 1\wedge ((e^{4\omega^2n}-1)/4)$. Distinguish into two scenarios: 
\begin{itemize}
    \item if $t=1$, then $4\omega^2n\ge 1$, and the above expression is $e^{\omega^2 m} \le e^{4\omega^4nm}$; 
    \item if $t<1$, then $\omega^2 n\le 1/2$ and $t\le 8\omega^2 n$. Since $m<n$, and $\cosh(x)\le 1+x^2, \sinh(x)\le 2x$ for all $x\in [0,1]$, the above expression is at most
    \begin{align*}
        1 + (\omega^2 m)^2 + 2t\omega^2m \le \exp(17\omega^4mn). 
    \end{align*}
\end{itemize}
Combining the above scenarios, we have 
\begin{equation*}
    \chi^2(\E P_0 \| \E P_1) \leq \exp(17\omega^4nm\cdot c_2\epsilon^{-1/s})-1.
\end{equation*}
Thus, we obtain 
\begin{align*}
    1-\TV(\E P_0, \E P_1) \gtrsim \frac{1}{1+\chi^2(\E P_0 \| \E P_1)} \geq \exp(-17\omega^4nm\cdot c_2\epsilon^{-1/s}) \stackrel{!}{=} \Omega(\delta). 
\end{align*}
This gives the desired lower bound 
\begin{equation*}
    nm  \gtrsim \frac{\log(1/\delta)}{\epsilon^{\frac{4s+1}{s}}}. 
\end{equation*}

\subsection{Lower bounds for $\cal P_\sf{D}$}
Clearly all lower bounds that apply to $\cal P_\sf{Db}$ also apply to $\cal P_\sf{D}$; in particular this gives the sample complexity lower bound for $\GoF$. In addition, lower bounds on the minimax high-probability sample complexity of $\TS$ were derived in \cite{diakonikolas2021optimal}. Hence, inspecting the claimed minimax rates, we only need to consider the problem $\LF$ in the cases $m \leq n \leq k$ and $n \leq m \leq k$. We give two separate constructions for the two cases, both inspired by classical constructions in the literature. As opposed to the i.i.d. sampling models, we will use the Poissonized models and rely on the formalism of pseudo-distributions as described in \cite{diakonikolas2021optimal}. Specifically, suppose we can construct a random vector $(p,q) \in [0,1]^2$ such that 1) $\E p = \E q = \Theta(1/k)$ and $\E |p-q| =\Theta(\epsilon/k)$; and 2) one of the following $\chi^2$ upper bounds hold for the Poisson mixture: 
\begin{equation}\label{eqn:pseudo-distr}
\begin{aligned}
\hspace*{-10cm} \chi^2(\E[\poi(np)\otimes\poi(nq)\otimes\poi(mp)] \|\E[ \poi(np)\otimes \poi(nq)\otimes\poi(mq)]) &\leq B(n,m,\epsilon,k), \\
    \chi^2(\E[\poi(nq)\otimes\poi(np)\otimes\poi(mp)] \| \E[\poi(np)\otimes \poi(nq)\otimes\poi(mp)]) &\leq B(n,m,\epsilon,k);
    \end{aligned}
\end{equation}
then $(n,m) \in \cal R_\sf{LF}(\epsilon, \delta, \cal P_\sf{D})$ requires $kB(n,m,\epsilon,k) \gtrsim \log(1/\delta)$ (essentially via Lemma \ref{lem:1-TV>chi}).

\subsubsection{Case $m \leq n \leq k$}\label{sec:m < n < k LB}
Suppose that $m \leq n \leq k/2$, and let $p,q$ be two random variables defined as
\begin{equation*}
    (p,q) = \begin{cases} (\frac1n, \frac1n) &\text{with probability } \frac nk, \\ (\frac{\epsilon}{k}, \frac{2\epsilon}{k}) &\text{with probability } \frac12(1-\frac nk), \\ (\frac{\epsilon}{k}, 0) &\text{with probability } \frac12(1-\frac nk). \end{cases}
\end{equation*}
Note that $\E[p]=\E[q]=\Theta(1/k)$ and $\E|p-q|=\Theta(\epsilon/k)$. Let $X,Y \in \R^3$ be random, whose distribution is given by
\begin{align*}
    X | (p,q) &\sim \poi(np) \otimes \poi(nq) \otimes \poi(mp), \\
    Y | (p,q) &\sim \poi(np) \otimes \poi(nq) \otimes \poi(mq). 
\end{align*}
Now, for any $(a,b,c) \in \N^3$ we have
\begin{align*}
    \P(X=(a,b,c)) &= \frac{1}{a!b!c!} \Big(\frac nk e^{-2-\frac mn} \left(\frac mn\right)^c + \frac12  (1-\frac nk) e^{-(3n+m)\epsilon/k}\left(\frac{\epsilon n}{k}\right)^a \left(\frac{2\epsilon n}{k}\right)^b \left(\frac{\epsilon m}{k}\right)^c \\ 
    &\qquad + \frac12 (1-\frac nk) e^{-(n+m)\epsilon/k} \left(\frac{\epsilon n}{k}\right)^a \one_{b=0} \left(\frac{\epsilon m}{k}\right)^c \Big).
\end{align*}
Similarly, for $Y$ we get
\begin{align*}
    \P(Y=(a,b,c)) &= \frac{1}{a!b!c!} \Big( \frac nk e^{-2-\frac mn} \left(\frac mn\right)^c + \frac12  (1-\frac nk) e^{-(3n+2m)\epsilon/k}\left(\frac{\epsilon n}{k}\right)^a \left(\frac{2\epsilon n}{k}\right)^b \left(\frac{2\epsilon m}{k}\right)^c \\ 
    &\qquad + \frac12  (1-\frac nk) e^{-n\epsilon/k} \left(\frac{\epsilon n}{k}\right)^a \one_{b=c=0}  \Big).
\end{align*}
In particular, we have
\begin{align*}
 \P(Y=(a,b,c)) = \Omega\left(\frac{1}{a!b!c!}\right)\begin{cases}
        1 &\text{if }(a,b,c) = (0,0,0), \\ \frac nk \left(\frac{m}{n}\right)^c &\text{otherwise.}
    \end{cases}
\end{align*}

Notice also that
\begin{align*}
    &\P(X=(a,b,c)) - \P(Y=(a,b,c)) \\
    &=\frac{1}{a!b!c!}\underbrace{\frac12(1-\frac nk) e^{-n\epsilon/k} }_{=\Theta(1)}\left(\frac{\epsilon}{k}\right)^{a+b+c} n^{a+b} m^c \underbrace{\left[2^be^{-(2n+m)\epsilon/k}\left(1-2^ce^{-m\epsilon/k}\right) + \one_{b=0}\left(e^{-m\epsilon/k}-\one_{c=0}\right)\right]}_{\eqdef I_{bc}}\\&= \frac{\Theta(1)}{a!b!c!} \left(\frac{\epsilon}{k}\right)^{a+b+c}n^{a+b}m^c I_{bc}, 
\end{align*}
where 
\begin{align}\label{eqn:I_bc bound}
    |I_{bc}| \lesssim \begin{cases} \frac{nm\epsilon^2}{k^2} &\text{if } b=c=0, \\ \frac{2^bm\epsilon}{k} &\text{if } b\geq1,c=0, \\
    \frac{n\epsilon}{k} &\text{if } b=0,c=1, \\
    2^{b+c} &\text{otherwise.} \end{cases}
\end{align}
We now turn to bounding the $\chi^2$-divergence between $X$ and $Y$. Using the estimates \eqref{eqn:I_bc bound}, we obtain 
\begin{align*}
    \chi^2(X\| Y) &= \sum_{(a,b,c)\in \N^3} \frac{(\P(X=(a,b,c))-\P(Y=(a,b,c)))^2}{\P(Y=(a,b,c))} \\
    &\lesssim I_{00}^2 + \left(\sum_{b=c=0,a\geq1} + \sum_{a\geq0,b+c\geq1}\right) \frac{\frac{1}{a!b!c!} \left(\frac{\epsilon}{k}\right)^{2a+2b+2c} n^{2a+2b} m^{2c} I_{bc}^2}{\frac nk \left(\frac {m}{n}\right)^c} \\
    &= I_{00}^2\left(1+ \sum_{a\geq1} \frac{1}{a!}\frac{\epsilon^{2a} n^{2a-1}}{k^{2a-1}}\right) + \left(\sum_{a\geq0} \frac{1}{a!} \frac{\epsilon^{2a} n^{2a}}{k^{2a}}\right) \sum_{b+c\geq1} \frac{1}{b!c!} \frac{\epsilon^{2b+2c}n^{2b+c-1}m^c}{k^{2b+2c-1}} I_{bc}^2 \\
    &\lesssim \frac{n^2m^2\epsilon^4}{k^4}\underbrace{\left(1 + \frac{n\epsilon^2}{k} e^{\epsilon^2n^2/k^2}\right)}_{=\Theta(1)} + \underbrace{e^{\epsilon^2 n^2/k^2}}_{=\Theta(1)} \sum_{b+c\geq1} \frac{1}{b!c!} \frac{\epsilon^{2b+2c}n^{2b+c-1}m^c}{k^{2b+2c-1}} I_{bc}^2.
\end{align*}
Focusing on the sum and decomposing it as $\sum_{b+c\geq1} = \sum_{c=0,b\geq1} + \sum_{b=0,c=1} + \sum_{b=0,c\geq2} + \sum_{b,c\geq1}$ we have the estimates
\begin{align*}
    &\sum_{b+c\geq1}\frac{1}{b!c!} \frac{\epsilon^{2b+2c}n^{2b+c-1}m^c}{k^{2b+2c-1}} I_{bc}^2 \\
    &\lesssim \sum_{c=0,b\geq1} \frac{1}{b!} \frac{\epsilon^{2b+2} n^{2b-1} 4^b m^2}{k^{2b+1}} + \frac{\epsilon^4mn^2}{k^3} + \sum_{b=0,c\geq2} \frac{1}{c!} \frac{\epsilon^{2c} n^{c-1}m^c 4^{c}}{k^{2c-1}} + \sum_{b,c\geq1} \frac{1}{b!c!} \frac{\epsilon^{2b+2c}n^{2b+c-1}m^c4^{b+c}}{k^{2b+2c-1}} \\
    &\lesssim \frac{\epsilon^4m^2n}{k^3}  + \frac{\epsilon^4mn^2}{k^3} + \frac{\epsilon^4m^2n}{k^3}  + \frac{\epsilon^4mn^2}{k^3} \lesssim \frac{\epsilon^4m n^2}{k^3}. 
\end{align*}
As $m\le k$, we obtain
\begin{equation*}
    \chi^2(X\| Y) \lesssim \frac{\epsilon^4mn^2}{k^3}. 
\end{equation*}

By \eqref{eqn:pseudo-distr} we conclude that in the regime $m \leq n \leq k$, $(n,m) \in \cal R_\sf{LF}(\epsilon, \delta, \cal P_\sf{D})$ requires $n^2m\gtrsim k^2\log(1/\delta)/\epsilon^4$, as desired.

\subsubsection{Case $n \leq m \leq k$}
This case is entirely analogous to the previous case with minor modifications. Suppose that $n \leq m \leq k/2$, and let $p,q$ be two random variables defined as
\begin{equation*}
    (p,q) = \begin{cases} (\frac1m, \frac1m) &\text{with probability } \frac mk, \\ (\frac{\epsilon}{k}, \frac{2\epsilon}{k}) &\text{with probability } \frac12(1-\frac mk), \\ (\frac{\epsilon}{k}, 0) &\text{with probability } \frac12(1-\frac mk). \end{cases}
\end{equation*}
Let $X,Y \in \R^3$ be random, whose distribution is given by
\begin{align*}
    X | (p,q) &\sim \poi(np) \otimes \poi(nq) \otimes \poi(mp), \\
    Y | (p,q) &\sim \poi(nq) \otimes \poi(np) \otimes \poi(mp). 
\end{align*}
Now, for any $(a,b,c) \in \N^3$ we have
\begin{align*}
    \P(X=(a,b,c)) &= \frac{1}{a!b!c!} \Big(\frac mk e^{-\frac{2n}{m}-1}\left(\frac nm\right)^{a+b} + \frac12(1-\frac mk)e^{-(3n+m)\epsilon/k}\left(\frac{\epsilon n}{k}\right)^a\left(\frac{2\epsilon n}{k}\right)^b \left(\frac{\epsilon m}{k}\right)^c \\&\qquad + \frac12(1-\frac mk)e^{-(n+m)\epsilon/k}\left(\frac{\epsilon n}{k}\right)^a\one_{b=0}\left(\frac{\epsilon m}{k}\right)^c\Big). 
\end{align*}
Similarly, for $Y$ we get
\begin{align*}
    \P(Y=(a,b,c)) &= \frac{1}{a!b!c!} \Big(\frac mk e^{-\frac{2n}{m}-1}\left(\frac nm\right)^{a+b} + \frac12(1-\frac mk)e^{-(3n+m)\epsilon/k}\left(\frac{2\epsilon n}{k}\right)^a\left(\frac{\epsilon n}{k}\right)^b \left(\frac{\epsilon m}{k}\right)^c \\&\qquad + \frac12(1-\frac mk)e^{-(n+m)\epsilon/k}\one_{a=0}\left(\frac{\epsilon n}{k}\right)^b\left(\frac{\epsilon m}{k}\right)^c\Big).
\end{align*}
In particular, we have
\begin{align*}
    \P(Y=(a,b,c)) = \Omega\left(\frac{1}{a!b!c!}\right)\begin{cases}
        1 &\text{if }(a,b,c) = (0,0,0), \\ \frac mk \left(\frac nm\right)^{a+b} &\text{otherwise.}
    \end{cases}
\end{align*}

Notice that
\begin{align*}
    &\P(X=(a,b,c))-\P(Y=(a,b,c)) \\
    &\qquad = \frac{1}{a!b!c!} \frac12(1-\frac mk) e^{-(n+m)\epsilon/k} \left(\frac{\epsilon}{k}\right)^{a+b+c} n^{a+b}m^c \underbrace{\left(e^{-2n\epsilon/k}(2^b-2^a) + \one_{b=0}-\one_{a=0} \right)}_{\eqdef J_{ab}} \\
    &\qquad= \frac{\Theta(1)}{a!b!c!} \left(\frac{\epsilon}{k}\right)^{a+b+c}n^{a+b}m^c J_{ab}, 
\end{align*}
where
\begin{equation}\label{eqn:J_ab est}
    |J_{ab}| \lesssim \begin{cases} 0 &\text{if } a+b=0, \\ \frac{n\epsilon}{k} &\text{if } a+b = 1, \\ 2^{a+b} &\text{if } a+b\geq2. \end{cases}
\end{equation}      
We now turn to bounding the $\chi^2$-divergence between $X$ and $Y$. We have
\begin{align*}
 \chi^2(X \| Y) &= \sum_{(a,b,c)\in\N^3} \frac{\big(\P(X=(a,b,c))-\P(Y=(a,b,c))\big)^2}{\P(Y=(a,b,c))} \\
 &\asymp \sum_{a+b+c\ge 1} \frac{\frac{1}{a!^2b!^2c!^2} \left(\frac{\epsilon}{k}\right)^{2a+2b+2c} n^{2a+2b}m^{2c}J_{ab}^2}{\frac{1}{a!b!c!} \frac mk \left(\frac nm\right)^{a+b}} \\
 &\asymp \sum_{a+b+c\ge 1} \frac{1}{a!b!c!} \frac{\epsilon^{2a+2b+2c} n^{a+b}m^{2c+a+b-1}J_{ab}^2}{k^{2a+2b+2c-1}} \\
 &= \underbrace{e^{\epsilon^2m^2/k^2}}_{\Theta(1)} \sum_{a+b\geq1} \frac{1}{a!b!} \frac{\epsilon^{2a+2b}n^{a+b}m^{a+b-1}J_{ab}^2}{k^{2a+2b-1}}, 
\end{align*}
where the last step follows from $J_{ab}=0$ if $a=b=0$. Now writing $t=a+b$ and distinguishing into cases $t=1$ and $t\ge 2$, by \eqref{eqn:J_ab est} we have
\begin{align*}
    \chi^2(X\|Y) \lesssim 
    \frac{\epsilon^4 n^3}{k^3} + \sum_{t\ge 2} \frac{2^t}{t!}\frac{\epsilon^{2t} n^t m^{t-1} 4^t}{k^{2t-1}} \lesssim \frac{\epsilon^4 n^3}{k^3} + \frac{\epsilon^4n^2m}{k^3} \lesssim \frac{\epsilon^4n^2m}{k^3}, 
\end{align*}
where the last line uses that $n \le m$. Once again, we can conclude by \eqref{eqn:pseudo-distr} that $n^2m\gtrsim\log(1/\delta)k^2/\epsilon^4$ is a lower bound for the sample complexity of $\LF$.

\end{document}